\subjclass[2020]{32M25, 32A27, 32S65,14C17, 53C05}
\def\1{{\bf 1}}
\def\End{{\rm End}}
\def\dbar{{\bar\partial}}
\def\R{{\mathbb R}}
\def\C{{\mathbb C}}
\def\Z{{\mathbb Z}}
\def\PM{{\mathcal{PM}}}
\def\Hom{{\rm Hom\, }}
\def\codim{{\rm codim\,}}
\def\Ok{{\mathcal O}}
\DeclareMathOperator{\supp}{supp}
\DeclareMathOperator{\im}{im}
\DeclareMathOperator{\res}{res}
\def\F{{\mathscr{F}}}
\def\G{{\mathscr{G}}}
\def\Dc{{\mathscr{D}}}
\def\elem{{e}}
\newcommand{\Res}{\text{\normalfont  Res}}
\newcommand{\sing}{\text{\normalfont  sing }}
\def\sf{{S}}
\def\be{\begin{equation}}
\def\ee{\end{equation}}
\newtheorem{thm}{Theorem}[section]
\newtheorem{lma}[thm]{Lemma}
\newtheorem{cor}[thm]{Corollary}
\newtheorem{prop}[thm]{Proposition}
\theoremstyle{definition}
\newtheorem{df}[thm]{Definition}
\theoremstyle{remark}
\newtheorem{preremark}[thm]{Remark}
\newtheorem{preex}[thm]{Example}
\newenvironment{remark}{\begin{preremark}}{\end{preremark}}
\newenvironment{ex}{\begin{preex}}{\end{preex}}
\numberwithin{equation}{section}
\begin{document}

\title[Baum-Bott residue currents]{Baum-Bott residue currents}

\date{\today}

\author[Lucas Kaufmann \& Richard L\"ark\"ang \& Elizabeth Wulcan]
{Lucas Kaufmann \& Richard L\"ark\"ang \& Elizabeth Wulcan}

\address{L.\ Kaufmann \\ Institut Denis Poisson, CNRS, Universit\' e d'Orl\' eans, Rue de Chartres, B.P. 6759, 45067, Orl\' eans cedex 2, FRANCE}

\email{lucas.kaufmann@univ-orleans.fr}

\address{R.\ L\"ark\"ang, E.\ Wulcan \\ Department of Mathematics\\Chalmers University of Technology and the University of Gothenburg\\S-412 96
Gothenburg\\SWEDEN}

\email{larkang@chalmers.se, wulcan@chalmers.se}

\begin{abstract}
  Let $\F$ be a holomorphic foliation of rank $\kappa$ on a complex manifold $M$ of dimension $n$, let $Z$ be a compact connected component of the singular set of $\F$, and let $\Phi \in \C[z_1,\ldots,z_n]$ be a homogeneous symmetric polynomial of degree $\ell$ with $n-\kappa < \ell \leq n$.
  Given a locally free resolution of the normal sheaf of $\F$,
  equipped with Hermitian metrics and certain smooth connections, we construct an explicit current $R^\Phi_Z$ with support on $Z$ that represents the Baum-Bott residue $\res^\Phi(\F; Z)\in H_{2n-2\ell}(Z, \C)$ and is obtained as the limit of certain smooth representatives of $\res^\Phi(\F; Z)$.
  If the connections are $(1,0)$-connections and $\codim Z\geq \ell$, then $R^\Phi_Z$ is independent of the choice of metrics and connections.
  When $\F$ has rank one we give a more precise description of $R^\Phi_Z$ in terms of so-called residue currents of Bochner-Martinelli type. In particular, when the singularities are isolated, we recover the classical expression of Baum-Bott residues in terms of Grothendieck residues.
\end{abstract}

\maketitle

\section{Introduction}

Let $M$ be a complex manifold of dimension $n$, let $\F$ be a
holomorphic foliation of rank $\kappa$ on $M$ and denote by $N \F$ its
normal sheaf, see Section ~\ref{subsec:prelim-foliations} for the
definitions. Baum-Bott's vanishing theorem  asserts that, when $\F$ is
non-singular, all the characteristic classes of $N \F$ of degree
$\ell$ vanish when $n-\kappa < \ell \leq n$, see Theorem~
~\ref{thm:vanishing} below.

When $M$ is compact and $\F$ is singular, the vanishing theorem implies the following fundamental index theorem: for every  connected component $Z$ of the singular set of $\F$, $\sing \F$, and any homogeneous symmetric polynomial $\Phi \in \C[z_1,\ldots,z_n]$ of degree $\ell$ with $n-\kappa < \ell \leq n$, there exists a cohomology class $\Res^\Phi(\F;Z) \in H^{2\ell}(M,\C)$ depending only on the local behavior of $\F$ around $Z$ such that
\begin{equation*}
 \sum_{Z \subset \sing \F} \Res^\Phi(\F;Z) = \Phi (N \F) \quad \text{ in } \quad H^{2\ell}(M,\C),
\end{equation*}
where $\Phi (N \F)$ is the corresponding characteristic class of $N \F$, see \eqref{eq:chern-class-coherent-sheaf}. This should be seen as a localization formula for $\Phi (N \F)$ around the singularities of $\F$.

From the above formula, the question of computing the residues $\Res^\Phi(\F;Z)$ or finding
explicit representatives becomes natural. When $\F$ is of rank one and the
singular component $Z$ is a single point $p \in M$, the residue
$\Res^\Phi(\F;p)$ is actually a number which can be computed in terms
of the classical Grothendieck residue. More precisely, for any homogeneous symmetric polynomial $\Phi \in \C[z_1,\ldots,z_n]$ of degree $n$, if $z = (z_1,\ldots,z_n)$ is a local coordinate system centered at $p$ so that $\F$ is generated by a holomorphic vector field $X = \sum_{i=1}^n a_i(z) \frac{\partial}{\partial z_i}$ near $0$ with $\{a_1 = \cdots = a_n = 0\} = \{0\}$, then
   \begin{equation} \label{eq:grothendieck-residue}
 \Res^\Phi(\F;p) = \Res_0 \left[ \Phi\left(\left(\frac{\partial a_i}{\partial z_j}\right)_{ij}\right) \frac{dz_1 \wedge \ldots \wedge dz_n}{a_1\cdots a_n} \right],
 \end{equation}
where the right hand side denotes the usual Grothendieck residue, see
\cite[\S 8]{baum-bott}, \cite[Ch. 5]{griffiths-harris}, and Example
\ref{supper} and Remark ~\ref{rmk:grothendieck-residue} below.
That the class of $\Phi(N\F)$ may be represented as the sum of Grothendieck residues in this situation was proven
earlier in \cite{baum-bott-mero}, see also \cite{soares} for an elementary proof.

 For foliations of higher rank or with larger singular set, little is known. The available results are limited and rely on a reduction to the case of rank one foliations with isolated singularities, where the above formula can be used. For instance, in the particular case where $\Phi$ has degree $n-\kappa+1$, one can slice the foliation by suitable transverse sections on which the induced foliation has rank one and isolated singularities \cite{baum-bott,correa-lourenco}. Another approach is to use a continuity theorem together with a perturbation to a foliation with isolated singularities, see \cite{bracci-suwa}. Together with \eqref{eq:grothendieck-residue} this can be used to effectively compute residues of some rank-one foliations with large singular set.

\smallskip

The main goal of this work is to obtain explicit representatives of Baum-Bott residues in general, without any restriction on the rank of $\F$ nor on the dimension of its singular set. Our main result is that the class $\Res^\Phi(\F;Z)$ can be naturally represented by a certain current supported by $Z$. We call these currents  \textit{Baum-Bott (residue) currents}.

Assume that
\begin{equation} \label{eq:resolution-NF}
    0 \to E_N \stackrel{\varphi_N}{\longrightarrow} E_{N-1} \stackrel{\varphi_{N-1}}{\longrightarrow} \dots \stackrel{\varphi_2}{\longrightarrow} E_1 \stackrel{\varphi_1}{\longrightarrow} TM
    \stackrel{\varphi_0}{\longrightarrow} N\F \to 0,
\end{equation}
is a locally free resolution of $\Ok$-modules of $N\F$,
where $TM$ is the holomorphic tangent bundle on $M$ and $\varphi_0: TM
\to N \F$ is the canonical projection, and assume that  $TM, E_1, \ldots, E_N$ are equipped with
connections $D_0, D_1, \ldots, D_N$,
respectively; throughout, we tacitly assume that all connections are
smooth. Given a homogeneous symmetric polynomial $\Phi \in
\C[z_1,\ldots,z_n]$ of degree $\ell$ with $n-\kappa < \ell \leq n$,
consider the characteristic form
$r^\Phi(D):=(i/2\pi)^\ell\Phi(\Theta(D_N)|\ldots |\Theta(D_0))$ 
associated with the collection
$D=(D_0, \ldots, D_N)$, see Section \ref{texas} for the precise
definition.
Baum-Bott showed that if
$U$ is a neighborhood of a compact connected component $Z$ of
$\sing\F$ that deformation retracts to $Z$,  and $D$ is fitted to
\eqref{eq:resolution-NF} and $U$ in
a certain sense, see Section ~\ref{lucia}, then the restriction $r^\Phi_Z(D)$ of $r^\Phi(D)$ to
$U$
is a form of degree $2\ell$ with compact support in $U$ and thus it
defines an element in  $H_{2n-2\ell}(Z,\C)$; moreover the class of
$r^\Phi_Z(D)$ only depends on the local behaviour of $\F$ around $Z$. The \emph{homological Baum-Bott residue}  $\res^\Phi(\F;Z) \in H_{2n-2\ell}(Z,\C)$ is defined as the class of  $r_Z^\Phi(D)$.
If $M$ is compact, then by inclusion of $Z$ in $M$ and Poincar\'e
duality, $\res^\Phi(\F;Z)$ corresponds to a class in $H^{2\ell}(M,
\C)$; this is by definition $\Res^\Phi(\F;Z)$. For details, see Section ~\ref{lucia}.

\begin{thm} \label{thm:BB-current}
 Let $M$ be a complex manifold of dimension $n$,  let $\F$ be a holomorphic foliation of rank $\kappa$ on $M$, and let $\Phi\in \C[z_1,\ldots, z_n]$ be a homogeneous symmetric polynomial of degree $\ell$ with $n-\kappa<\ell\leq n$.
Assume that the normal sheaf $N\F$ of $\F$ admits a locally free
resolution of the form \eqref{eq:resolution-NF} on $M$,
and that $TM, E_1, \ldots, E_N$ are equipped with Hermitian metrics
and  $(1,0)$-connections $D^{TM}, D_1, \ldots, D_N$, respectively, and
assume that
$D^{TM}$ is torsion free.

Then for $\epsilon>0$
there are $(1,0)$-connections $\widehat{D}_0^\epsilon,
\widehat{D}_1^\epsilon, \dots,\widehat{D}_N^\epsilon$ on
$TM,E_1,\dots,E_0$, respectively, constructed from the Hermitian
metrics and
connections $D^{TM}, D_0, \ldots, D_N$,
such that
\begin{equation*}
    \lim_{\epsilon \to 0}
    \left(\frac{i}{2\pi}\right)^\ell\Phi(\Theta(\widehat{D}_N^\epsilon|\dots|\Theta(\widehat{D}_0^\epsilon))
  \end{equation*}
  exists as a current
  \[
    R^\Phi=\sum R^\Phi_Z,
    \]
where the sum runs over the connected components $Z$ of $\sing \F$.
For each $Z$, $R_Z^\Phi$ is a closed current of degree $2\ell$ with
support on $Z$
that only depends on \eqref{eq:resolution-NF} and the Hermitian
metrics and connections $D^{TM}, D_1, \ldots, D_N$
close to $Z$.
   If $\codim Z\geq \ell$, then $R_Z^\Phi$ is independent of the choice of metrics and connections.
Moreover, when $Z$ is compact,
$R^\Phi_Z$ represents the Baum-Bott residue $\res^\Phi(\F;Z) \in H_{2n-2\ell}(Z,\C)$.
\end{thm}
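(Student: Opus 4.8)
The plan is to exhibit each regularized characteristic form $r^\Phi(\widehat{D}^\epsilon):=(i/2\pi)^\ell\Phi(\Theta(\widehat{D}_N^\epsilon)|\dots|\Theta(\widehat{D}_0^\epsilon))$ as an \emph{honest} smooth Baum-Bott representative for every fixed $\epsilon>0$, and then to obtain $R^\Phi$ as its limit. I would first define the connections $\widehat{D}_j^\epsilon$ by perturbing the Chern connections of the given metrics with terms built from the morphisms $\varphi_j$ of the resolution \eqref{eq:resolution-NF} and their minimal inverses $\sigma_j=\varphi_j^\ast(\varphi_j\varphi_j^\ast)^{-1}$, regularized by $\epsilon$; the construction is arranged so that, as $\epsilon\to 0$, the collection $(\widehat{D}_0^\epsilon,\dots,\widehat{D}_N^\epsilon)$ degenerates on $M\setminus\sing\F$ towards a collection fitted to \eqref{eq:resolution-NF} in the sense of Section~\ref{lucia}. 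For each fixed $\epsilon>0$, Chern-Weil theory (Section~\ref{texas}) then shows that $r^\Phi(\widehat{D}^\epsilon)$ is a smooth closed form of degree $2\ell$ representing $\Phi(N\F)$, and that its localization to a deformation-retract neighborhood $U$ of a compact component $Z$ represents $\res^\Phi(\F;Z)$; since the $\widehat{D}^\epsilon$ are fitted connections joined by smooth homotopies, these localized classes are independent of $\epsilon$.

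The analytic heart is the existence of $\lim_{\epsilon\to 0}r^\Phi(\widehat{D}^\epsilon)$ as a current. Off $\sing\F$ the limit is a Bott-type characteristic form, which vanishes by the vanishing theorem, Theorem~\ref{thm:vanishing}; hence the limit, once shown to exist, is supported on $\sing\F$ and splits as $\sum_Z R_Z^\Phi$. To prove existence I would expand $\Phi(\Theta(\widehat{D}_N^\epsilon)|\dots|\Theta(\widehat{D}_0^\epsilon))$ into its regularized curvature terms and recognize the summands as the $\epsilon$-smoothings of almost semimeromorphic forms associated with the $\varphi_j$ and $\sigma_j$. Resolving the singularities of the ideals generated by the $\varphi_j$ by a sequence of blow-ups, I would reduce the convergence to a local computation with normal-crossings monomials, where the limit exists and is pseudomeromorphic. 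Closedness of $R_Z^\Phi$ follows by passing $d\,r^\Phi(\widehat{D}^\epsilon)=0$ to the limit, and dependence only on the data near $Z$ reflects the local nature of the construction.

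For the independence of metrics and connections when $\codim Z\geq\ell$, I would compare two choices of data by a transgression: interpolating linearly between the two collections and carrying out the same regularization produces a family of forms whose $\epsilon\to 0$ limit is a pseudomeromorphic current $\eta$ supported on $Z$ with $d\eta=R_{Z,1}^\Phi-R_{Z,2}^\Phi$. Since the antiholomorphic degree of $\eta$ is at most $\ell-1<\codim Z$, the dimension principle forces $\eta$, and hence the difference, to vanish. The same transgression mechanism gives the final statement: for every $\epsilon$ the current $R_Z^\Phi$ and the smooth representative $r_Z^\Phi(\widehat{D}^\epsilon)$ differ by $d$ of a current compactly supported in $U$, so they define the same class; combined with the $\epsilon$-independence of $[r_Z^\Phi(\widehat{D}^\epsilon)]$ from the first paragraph, this shows that $R_Z^\Phi$ represents $\res^\Phi(\F;Z)\in H_{2n-2\ell}(Z,\C)$.

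I expect the convergence step to be the main obstacle, since it requires simultaneous control of the polarized Chern-Weil polynomial $\Phi$ and of the $\epsilon$-regularization of the singular minimal inverses $\sigma_j$, and the reduction to monomial form must be organized so that the limits of all the individual curvature terms exist and assemble into a single pseudomeromorphic current.
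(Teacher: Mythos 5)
Your proposal follows essentially the same route as the paper's proof: the same $\epsilon$-regularized connections built by perturbing reference connections with terms formed from the $\varphi_k$ and their minimal inverses, the same use of the vanishing theorem to force support on $\sing\F$, the same almost semi-meromorphic/pseudomeromorphic machinery for existence of the limit (the paper invokes Lemma~\ref{wedding} and Remarks~\ref{elsa} and~\ref{bronze} rather than re-running a blow-up reduction by hand, but that is exactly what underlies those results), the same transgression-plus-dimension-principle argument for independence of the data when $\codim Z\geq \ell$ (cf.\ Proposition~\ref{prop:changeConnectionsMetrics} and Corollary~\ref{famine}), and the same cohomological identification of $R^\Phi_Z$ with $\res^\Phi(\F;Z)$.

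There is, however, one step that fails as literally written, and it sits under the word \emph{fitted}, on which your whole localization rests. You take the \emph{Chern connections} of the given metrics as the base of the perturbation. On $TM$ this discards the hypothesis that $D^{TM}$ is torsion free (the Chern connection of a Hermitian metric on $TM$ is torsion free only when the metric is K\"ahler), and torsion-freeness is precisely what makes the construction work: it is used in Lemma~\ref{lemma:Dvarphi1} to show that $D_0 = D^{TM} + \Dc\varphi_1\,\sigma_1(dz\cdot\partial/\partial z)$ satisfies $i(u)D_0 v = [u,v] \mod \im\varphi_1$, which is what makes $D_{basic}$ in \eqref{eq:DbasicDef} a \emph{basic} connection (Lemma~\ref{lemma:Dbasic}); being basic is what feeds Theorem~\ref{thm:vanishing} and hence gives the compact support of the forms $r^\Phi(\widehat D^\epsilon)$ near $Z$. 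With a non-torsion-free base this identity, and with it the fittedness you simply assert (``the construction is arranged so that\dots''), does not hold, and the localization collapses. Relatedly, you waver between saying the collection ``degenerates towards'' a fitted collection as $\epsilon\to 0$ and saying each $\widehat D^\epsilon$ \emph{is} fitted: to get compact support of the Chern--Weil form in $U$ at fixed $\epsilon$ you need the latter, i.e., compatibility with a basic connection outside a compact neighborhood $\Sigma_\epsilon$ of $Z$, which is what Proposition~\ref{prop:basic-connection-asm} delivers and what your sketch leaves unproved; this construction is the actual content of Section~\ref{sec:basic-connection}. Once the base connection on $TM$ is taken to be the given torsion-free $D^{TM}$ and fittedness is established, your outline coincides with the paper's proof.
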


To construct the connections $\widehat D^\epsilon_k$ we first
construct connections $\widetilde D_k$ on $E_k|_{M\setminus \sing\F}$
and $D_{basic}$ on $N\F|_{M\setminus \sing\F}$ such that $D_{basic}$
is a so-called basic connection and $(\widetilde D_N, \ldots,
\widetilde D_0, D_{basic})$ is compatible with
\eqref{eq:resolution-NF} in a certain sense, see Definition
~\ref{def:basic-connection} and \eqref{eq:compatible}. The $\widetilde
D_k$ are defined only over $M\setminus \sing \F$, but their
singularity as we approach $\sing \F$ can be controlled. More precisely, they can be thought of as singular connections on $M$ with almost semi-meromorphic singularities  along $\sing\F$ in the sense of \cite{andersson-wulcan:asm}, cf.\ Lemma ~\ref{wedding}. The $\widehat D_k^\epsilon$ are then constructed as smoothings of the $\widetilde D_k$.

 It follows from this control of the singularities that the limits $R^\Phi$ of the characteristic forms $r^\Phi(\widehat D^\epsilon)$ exist and are so-called residue currents, or more precisely pseudomeromorphic currents in the sense of \cite{andersson-wulcan:crelle}, and can be seen as generalizations of the Grothendieck residue, see Section ~\ref{stream}. Note that we give meaning to the Baum-Bott current $R^\Phi_Z$ even when $Z$ is non-compact. Together with the dimension principle for pseudomeromorphic currents (Proposition \ref{prop:dimension-principle}),  the non-vanishing of certain  currents $R_Z^\Phi$ yields lower bounds on the dimension of the corresponding singular component $Z$, see Corollary \ref{cor:dim-Z}.

 The characteristic forms  $r^\Phi(\widehat D^\epsilon)$
 depend on the choices of Hermitian metrics and $(1,0)$-connections on
 the bundles in \eqref{eq:resolution-NF} and consequently so do the
 limits
 $R^\Phi$. In Section ~\ref{pomona} we give a description of this dependence. In particular, it follows that $R_Z^\Phi$ is independent of the metrics and connections if $\codim Z\geq \ell$.

 The existence of the currents $R^\Phi$ relies on the existence of a locally free resolution of $N\F$.
Such a resolution exists, e.g., if $M$ is a is a projective manifold. Also if $M$ is Stein, it exists after
replacing $M$ by some neighborhood of any compact subset, cf., e.g.,
\cite[Theorem 7.2.1]{hor:scv} or \cite[p.\ ~991]{andersson-wulcan:ens}.
In future work in progress, \cite{KLW-global}, we show that our construction may be combined with the global theory
of Chern classes of Green \cite{green} to obtain currents $R^\Phi$ without having to assume the existence of a global resolution.
We also obtain stronger results about independence of (local) locally free resolutions of $N\F$,
not just independence of the metrics and connections.

The construction of $R^\Phi$ is inspired by
\cite{LW:chern-currents-sheaves} where, given a locally free
resolution of a coherent analytic sheaf $\G$ whose support $\supp \G$
has positive codimension, explicit currents that represent the Chern
class of $\G$ with support on $\supp\G$ were defined as limits of
certain Chern forms.
Here we aim to represent characteristic classes of $N\F$, which has
full support, by currents supported by the proper analytic subset
$\sing \F$.  This is possible due to the existence of the special
connection $D_{basic}$ on $N\F|_{M\setminus\sing\F}$ that satisfies a certain vanishing theorem,
cf.\ Theorem \ref{thm:vanishing} and Lemma \ref{lemma:Dbasic} below.

When $\F$ has rank one we give a more precise description of the currents $R^\Phi$ in terms of so-called residue currents of Bochner-Martinelli type, see Theorem ~\ref{thm:BBBM}.  In particular, when $Z$ is a single point, we recover formula \eqref{eq:grothendieck-residue} above, see Corollary ~\ref{nordstan}.

\smallskip

The paper is organized as follows. In Section ~\ref{sec:prelim} we
introduce some notation and provide some necessary background on
complexes of vector bundles, characteristic classes, and holomorphic foliations. In Section ~\ref{bbres} we recall the construction of residues in \cite{baum-bott} and in Section ~\ref{stream} we gather some basic definitions and preliminary results on residue currents. In Section ~\ref{sec:basic-connection} we describe the construction of the connections $\widehat D^\epsilon_k$.  In Section ~\ref{sec:BB-currents} we show that the characteristic forms $r^\Phi_Z(\widehat D^\epsilon_k)$ have limits as pseudomeromorphic currents and prove a more precise and slightly more general formulation of Theorem ~\ref{thm:BB-current}, see Theorem ~\ref{train} and Corollary ~\ref{famine}; we also investigate the dependence of the Hermitian metrics and connections. Section ~\ref{sec:vector-field} is devoted to the rank one case.

\smallskip

\textbf{Acknowledgments.}
Part of this work was carried out while L.\ Kaufmann was at the Institute for Basic Science (under the grant IBS-R032-D1). He would like to thank IBS and Jun-Muk Hwang for providing excellent working conditions. He also benefited from visits to the Department of Mathematical Sciences of the University of Gothenburg and Chalmers University of Technology. He would like to thank the department for the support and hospitality.

R.\ L\"ark\"ang was partially supported by the Swedish Research Council (2017-04908).

E.\ Wulcan was partially supported by the Swedish Research Council (2017-03905) and the G\"oran Gustafsson Foundation for Research in Natural Sciences and Medicine.

The authors thank J. V. Pereira for helpful discussions.

\section{Holomorphic foliations, vector bundle complexes and characteristic classes}\label{sec:prelim}

Throughout the paper $M$ will be a complex manifold of dimension $n$.

\subsection{Holomorphic foliations} \label{subsec:prelim-foliations}

A \textit{holomorphic foliation}  $\F$ on $M$ is the data of a coherent analytic subsheaf $T \F$ of $TM$, called the \textit{tangent sheaf} of $\F$, such that

\begin{itemize}
\item[(i)]  $T \F$ is \emph{involutive}, that is, for any pair of local sections $u,v$ of $T \F$, the Lie bracket $[u,v]$ belongs to $T \F$;

\item[(ii)]  $\F$ is \emph{saturated}, that is, the \textit{normal sheaf} $N \F:=TM / T\F$ is torsion free.
\end{itemize}

The generic rank of $T \F$ is called the \textit{rank of $\F$}. Note
that $N\F$ is a coherent analytic sheaf. The \textit{singular set} of
$\F$ is, by definition, the smallest subset $\sing \F \subset M$
outside of which $N \F$ is locally free. It follows from our
definitions that $\sing \F$ is an analytic subset of $M$ of
codimension $\geq 2$. We say that $\F$ is \textit{regular} if $\sing
\F$ is empty.  By definition, the restriction of  $N\F$ to $M
\setminus \sing \F$ defines a regular foliation
whose normal sheaf is a holomorphic vector bundle of rank $n-\kappa$,
where $\kappa$ is the rank of $\F$. Moreover, by Frobenius Theorem, over $M \setminus \sing \F$, the bundle $T\F$ is locally given by vectors tangent to the fibers of a (local) holomorphic submersion.

\smallskip

The saturation property above is standard in the literature on
holomorphic foliations. It allows one to avoid ``artificial''
singularities and is convenient when studying the birational geometry
of foliations. This condition is equivalent to the fullness of $T\F$ required in \cite{baum-bott}. We note that we do not explicitly use this condition in most of our proofs, except in Section ~\ref{sec:vector-field}, so our main results can be applied to non-saturated foliations if necessary.

\subsection{Vector bundle complexes, connections, and superstructure}\label{superduper}
Consider a complex of smooth complex vector bundles
\begin{equation} \label{eq:augmentedEcomplex}
     0 \to E_N \xrightarrow[]{\varphi_N}  \cdots\xrightarrow[]{\varphi_1}  E_0\xrightarrow[]{\varphi_0} E_{-1} \to 0
   \end{equation}
   over $M$.
Following \cite{andersson-wulcan:ens}, we equip $E := \bigoplus_{k=-1}^N E_k$ with a superstructure by letting $E^+ =  \bigoplus_{2k} E_k$ (resp.\ $E^- =  \bigoplus_{2k + 1} E_k$) be the even (resp.\ odd) parts of $E = E^+ \oplus E^-$. This is a $\Z \slash 2 \Z$-grading that simplifies some of the formulas and computations. An endomorphism $\varphi \in \End(E)$ is even (resp.\ odd) if it preserves (resp.\ switches) the $\pm$-components.

The superstructure affects how form-valued endomorphisms act.
Assume that $\alpha=\omega\otimes \gamma$ is a form-valued section of $\End (E)$, where $\omega$ is a smooth form of degree $m$ and $\gamma$ is a section of
$\Hom(E_\ell,E_k)$.
We let $\deg_f \alpha = m$ and $\deg_e \alpha = k-\ell$ denote
the \emph{form} and \emph{endomorphism degrees}, respectively, of $\alpha$.
The \emph{total degree} is $\deg \alpha = \deg_f \alpha + \deg_e \alpha$.
If $\beta$ is a form-valued section of $E$, i.e., $\beta=\eta\otimes\xi$, where
$\eta$ is a smooth form, and $\xi$ is a section of $E$, both homogeneous in degree, then
the we define the action of $\alpha$ on $\beta$ by
\begin{equation}\label{thyme}
    \alpha(\beta) := (-1)^{(\deg_e \alpha)(\deg_f \beta)} \omega \wedge \eta \otimes \gamma(\xi).
\end{equation}
If furthermore, $\alpha'=\omega'\otimes \gamma'$, where $\gamma'$ is a holomorphic
section of $\End (E)$, and $\omega'$ is a smooth form, both homogeneous in degree, then
we define
\begin{equation}\label{rosemary}
    \alpha \alpha' := (-1)^{(\deg_e \alpha)(\deg_f \alpha')} \omega\wedge \omega' \otimes \gamma \circ \gamma'.
  \end{equation}

 The superstructure also affects how endomorphisms act on vector
 field-valued sections of $E$. If $\alpha$ is a
 section of $\End (E)$, $u$ is a vector field on $M$, and $\beta$ is a
 section of $E$, then
  \begin{equation}\label{estragon}
    \alpha(u\otimes \beta) = (-1)^{\deg\alpha} u \otimes \alpha(\beta).
  \end{equation}

  \smallskip

  Given a vector field $u$ on $M$, we let $i(u)$ denote contraction of a differential form on $M$ by $u$.
  Then $i(u)$ extends to form valued sections of $E$ and $\End (E)$ by letting
  \begin{equation*}
    i(u) (\eta \otimes \xi) = i(u) \eta \otimes \xi,
  \end{equation*}
  if $\eta$ is a smooth form and $\xi$ is a section of $E$ or $\End (E)$.
  It follows from \eqref{thyme} and \eqref{rosemary} that if $\alpha, \alpha'$ and $\beta$ are form-valued sections of $\End (E)$ and $E$, respectively, then
  \begin{equation}\label{parsley}
 i(u) \alpha(\beta) = i(u) \big (\alpha (\beta) \big ) - (-1)^{\deg \alpha} \alpha \big (i(u) \beta\big )
\end{equation}
and
 \begin{equation}\label{tennis}
 i(u) (\alpha\alpha') = i(u)\alpha \alpha'  + (-1)^{\deg \alpha} \alpha i(u) \alpha'.
\end{equation}

\smallskip

Assume that each $E_k$ is equipped with a connection $D_k$. Then there
is an induced connection $D_E$ on $E$, that in turn induces a connection $D_{\End}$ on $\End (E)$, defined by
\begin{equation}\label{meat}
D_\End\alpha = D_E\circ \alpha - (-1)^{\deg \alpha}\alpha \circ D_E.
  \end{equation}
This connection takes the superstructure into account and it satisfies the Leibniz' rule
\begin{equation}\label{wheat}
D_\End(\alpha\alpha') = D_\End \alpha \alpha' + (-1)^{\deg \alpha}\alpha D_\End \alpha'.
\end{equation}
Here $\alpha$ and $\alpha'$ are form-valued sections of $\End (E)$.
If $\alpha : E_k \to E_\ell$, we will sometimes write
\begin{equation} \label{eq:DEndDependence}
    D_{\End}\alpha = D_{D_k,D_\ell} \alpha
\end{equation}
when we need to specify the dependence on the connections.

Following \cite{baum-bott} we say that the collection of connections $(D_N, \ldots, D_{-1})$ is \emph{compatible} with \eqref{eq:augmentedEcomplex} if
\begin{equation} \label{eq:compatible}
    D_{k-1} \circ \varphi_k = -\varphi_k \circ D_k
  \end{equation}
  for $k=0, \ldots, N$. In terms of the induced connection $D=D_\End$ on $\End(E)$, the compatibility conditions simply become $D\varphi_k = 0$.  We note that \eqref{eq:compatible} differs by a sign from the original definition in \cite[Defintion~4.16]{baum-bott}.
  This is due to the superstructure convention, cf., \cite[Remark~4.2]{LW:chern-currents-sheaves}.

\smallskip

Assume that $\alpha$ is a scalar-valued section of $\End (E)$.
It will sometimes be convenient to consider $D_{\End}\alpha$ as a
section of
$\Hom(TM \otimes E,E)$.
Let $\Dc \alpha$ be the section of $\Hom(TM \otimes E,E)$ given by
\begin{equation} \label{jewel}
\Dc \alpha: u\otimes \beta \mapsto (-1)^{\deg \alpha} i(u) D_\End\alpha (\beta).
\end{equation}

In view of \eqref{estragon}, \eqref{tennis} and \eqref{wheat},  $\Dc$ satisfies the Leibniz rule
\begin{equation} \label{eq:nablaLeibniz}
\Dc(\alpha\alpha') = \Dc\alpha \alpha' + (-1)^{\deg \alpha}\alpha\Dc\alpha'.
\end{equation}

We extend $i(u)$ from $\End (E)$ to act on (form-valued) sections
$\Dc\alpha$ of $\Hom(TM\otimes E, E)$, by equipping $TM\otimes E$ with the same grading as $E$.
In particular,
 \begin{equation}\label{marit}
  \deg \Dc \alpha = \deg \alpha,
\end{equation}
and \eqref{parsley} and \eqref{tennis} hold also if $\alpha$ or $\alpha'$ is replaced by $\Dc \alpha$ or $\Dc \alpha'$.

If $\alpha$ and $\beta$ are sections of $\End(E)$ and $E$,
respectively, then $\Dc \alpha (\beta)$ is the section of $\Hom(TM, E)$ defined by
\begin{equation}\label{abba}
  \Dc \alpha (\beta): u \mapsto \Dc \alpha (u\otimes \beta).
\end{equation}

\subsection{Characteristic classes and forms}\label{texas}
Most of the material in this section can be found in
\cite[Sections~1 and ~4]{baum-bott}.

Let $E$ be a smooth complex vector bundle of rank $r$ over $M$ and let $\Phi \in \C[z_1,\ldots,z_n]$ be a homogeneous symmetric polynomial of degree $\ell \leq n$. Then there is a unique polynomial $\widehat \Phi$, such that
$$
\Phi(z_1,\ldots, z_n)=\widehat \Phi\big (\elem_1(z), \ldots, \elem_\ell(z)\big ),
$$
where $\elem_1,\ldots, \elem_\ell \in \C[z_1,\ldots,z_n]$ denote the elementary symmetric polynomials. The class $\Phi(E)\in H^{2\ell}(M, \C)$ is defined as
\begin{equation}\label{dragonfly}
  \Phi(E) = \widehat \Phi\big (c_1(E), \ldots, c_\ell(E)\big ),
\end{equation}
where $c_j(E)$ is the $j$th Chern class of $E$.

Assume that $E$ is equipped with a connection $D$. Then $\Phi(E)$ is the de Rham class of the closed $2\ell$-form
\[
  \left (\frac{i}{2\pi}\right )^\ell \Phi\big (\Theta (D) \big ),
\]
where $\Theta (D)$ is the curvature form of $D$ and we identify $\Phi$ with the corresponding invariant polynomial on (form-valued) $(r\times r)$-matrices. Note that with this identification,
\begin{equation}\label{butterfly}
\det \big (I + \Theta(D) \big ) = 1 + \elem_1\big (\Theta(D) \big ) + \cdots + \elem_n\big (\Theta(D) \big),
\end{equation}
and
\begin{equation}\label{schmetterling}
  \Phi\big (\Theta (D) \big ) =  \widehat \Phi\big (\elem_1(\Theta(D)), \ldots, \elem_\ell(\Theta(D))\big ).
\end{equation}
Note that $(i/2\pi)^j \elem_j (\Theta(D) ) $ is just the $j$th Chern form of $(E, D)$.

  \smallskip

Next, let $\G$ be a coherent analytic sheaf over $M$ and
assume that $\G$ admits a resolution by smooth complex vector bundles
\begin{equation}\label{snow}
  0 \to E_N \to \cdots \to E_0 \to \G \to 0.
  \end{equation}
  The \textit{total Chern class} of $\G$ is defined as the total Chern class of the virtual bundle $\sum_{k = 0}^N (-1)^k E_k$, i.e.,
  $$c(\G) = c \Big( \sum_{k = 0}^N (-1)^k E_k \Big) = \prod_{k=0}^N  c(E_k)^{(-1)^k} \in H^\bullet(M,\C);$$
  the class $c(\G)$ is independent of the chosen resolution, which follows from the construction of Chern classes of Green,
  \cite{green} (see also, e.g., \cite[\S 6]{baum-bott} in case the
  resolution is real analytic, and $M$ is
  compact).
We can write $c(\G)= 1 + c_1(\G) + \cdots + c_n(\G)$, where $c_j(\G) \in  H^{2j}(M,\C)$ is the $j$th \textit{Chern class} of $\G$.
If $\Phi \in \C[z_1,\ldots,z_n]$ is as above, then $\Phi(\G)\in H^{2\ell}(M, \C)$ is defined as
\begin{equation} \label{eq:chern-class-coherent-sheaf}
 \Phi(\G)= \widehat \Phi\big(c_1(\G),\ldots,c_\ell(\G)\big),
\end{equation}
cf.\ \eqref{dragonfly}.

Assume that the vector bundles in \eqref{snow} are equipped with connections
$D_0, \ldots, D_N$, and let $\Theta(D_k)$, $k=0 ,\ldots, N$, be the corresponding curvature forms.
Generalizing \eqref{butterfly} and \eqref{schmetterling} we let $\elem_j(\Theta(D_N) | \ldots  | \Theta(D_0))$ be the $2 j$-form defined by
\begin{equation} \label{eq:mixed-chern}
\prod_{k=0}^N \Big( \det \big[I + \Theta(D_k) \big] \Big)^{(-1)^k}  = 1 + \elem_1\big (\Theta(D_N) | \ldots  | \Theta(D_0)\big ) + \cdots + \elem_n\big (\Theta(D_N) | \ldots  | \Theta(D_0)\big),
\end{equation}
and we set
\begin{equation*}
  \Phi\big(\Theta(D_N) | \ldots  | \Theta(D_0)\big) = \widehat \Phi \big(\elem_1\big(\Theta(D_N) | \ldots  | \Theta(D_0)\big), \ldots, \elem_\ell\big(\Theta(D_N) | \ldots  | \Theta(D_0)\big) \big).
  \end{equation*}
  Then $\Phi(\Theta(D_N) | \ldots  | \Theta(D_0))$ is a closed $2\ell$-form and
  \[
 \left (\frac{i}{2\pi}\right )^\ell  \Phi\big(\Theta(D_N) | \ldots  | \Theta(D_0)\big)
\]
represents $\Phi(\G)$.
In particular, $(i/2\pi)^j\elem_j(\Theta(D_N) | \ldots  | \Theta(D_0))$ represents $c_j(\G)$.
If $\G$ is locally free (and \eqref{snow} is equipped with compatible connections) this is reflected on the level of forms in the following way:
\begin{lma}[\cite{baum-bott} - Lemma 4.22]  \label{lemma:exact-curvature}
  Assume that the complex \eqref{eq:augmentedEcomplex} is pointwise exact over some open set $U \subset M$. If $(D_N, \ldots, D_{-1})$ is a compatible collection of connections, then
 \begin{equation*}
\Phi\big(\Theta(D_N) | \ldots  | \Theta(D_0)\big) = \Phi\big(\Theta(D_{-1})\big) \quad \text{on} \quad U.
\end{equation*}
  \end{lma}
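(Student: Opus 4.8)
The plan is to reduce the statement to a single identity between the two generating functions and then prove that identity pointwise on $U$ by a supertrace computation. Concretely, by \eqref{schmetterling}, \eqref{butterfly} and \eqref{eq:mixed-chern} it suffices to show
\[
\prod_{k=0}^N\big(\det[I+\Theta(D_k)]\big)^{(-1)^k} = \det\big[I+\Theta(D_{-1})\big]\qquad\text{on }U,
\]
since matching the degree-$2j$ components of both sides yields $\elem_j(\Theta(D_N)|\ldots|\Theta(D_0)) = \elem_j(\Theta(D_{-1}))$ for every $j$, and applying $\widehat\Phi$ then gives the claim. I would now pass to the augmented superbundle $E=\bigoplus_{k=-1}^N E_k$ with the induced connection $D_E=\bigoplus_k D_k$, whose curvature is the block-diagonal endomorphism $\Theta(D_E)=\bigoplus_k\Theta(D_k)$. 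Writing $\mathrm{sdet}$ and $\mathrm{str}$ for the superdeterminant and supertrace on $\End(E)$, the displayed identity is exactly $\mathrm{sdet}(I+\Theta(D_E))=1$ on $U$ (multiply by the $k=-1$ factor $\det[I+\Theta(D_{-1})]^{-1}$). Since $\log\mathrm{sdet}(I+\Theta(D_E))=\mathrm{str}\log(I+\Theta(D_E))=\sum_{j\ge1}\tfrac{(-1)^{j+1}}{j}\mathrm{str}(\Theta(D_E)^j)$ and $\Theta(D_E)$ is nilpotent as a form, everything reduces to proving
\[
\mathrm{str}\big(\Theta(D_E)^j\big)=0\qquad\text{on }U,\quad j\ge1.
\]

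The vanishing will come from two structural inputs. First, set $\varphi:=\sum_k\varphi_k$, an odd endomorphism of $E$ with $\varphi^2=0$. Compatibility \eqref{eq:compatible} says $D_\End\varphi=0$, which by \eqref{meat} (with $\deg\varphi$ odd) means $D_E\circ\varphi=-\varphi\circ D_E$; squaring $D_E$ then gives $\Theta(D_E)\circ\varphi=\varphi\circ\Theta(D_E)$, so $\Theta(D_E)$ commutes with $\varphi$. Second, pointwise exactness of \eqref{eq:augmentedEcomplex} over $U$ forces $\ker\varphi_k$ and $\im\varphi_k$ to have constant rank there, hence to be smooth subbundles; choosing Hermitian metrics on the $E_k$ and putting $\sigma:=\varphi^*(\varphi\varphi^*+\varphi^*\varphi)^{-1}$ (the fibrewise minimal inverse, which is smooth because the fibrewise Laplacian is invertible precisely by exactness) produces an odd endomorphism with $\varphi\sigma+\sigma\varphi=\Id$ on $E|_U$.

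With these two facts the computation is short. For $j\ge1$, using $\Id=\varphi\sigma+\sigma\varphi$ and that $\Theta(D_E)^j$ commutes with $\varphi$,
\[
\Theta(D_E)^j=\Theta(D_E)^j(\varphi\sigma+\sigma\varphi)=\varphi\,\beta+\beta\,\varphi,\qquad \beta:=\Theta(D_E)^j\sigma,
\]
so $\Theta(D_E)^j$ is the supercommutator of the two odd elements $\varphi$ and $\beta$. Applying $\mathrm{str}$ and the graded cyclicity $\mathrm{str}(\varphi\beta)=(-1)^{|\varphi||\beta|}\mathrm{str}(\beta\varphi)=-\mathrm{str}(\beta\varphi)$ (the sign is $-1$ since $\varphi$ and $\beta$ both have odd total degree) gives $\mathrm{str}(\Theta(D_E)^j)=\mathrm{str}(\varphi\beta)+\mathrm{str}(\beta\varphi)=0$ on $U$, which is what we needed.

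I expect the main obstacle to be the superstructure sign bookkeeping: one must verify carefully, using \eqref{rosemary}, \eqref{meat} and \eqref{wheat}, that compatibility really yields $\Theta(D_E)\varphi=\varphi\Theta(D_E)$ with the correct sign, and that the graded cyclicity of $\mathrm{str}$ holds for form-valued endomorphisms with respect to the \emph{total} (form plus endomorphism) degree. A secondary point is the smoothness of the homotopy $\sigma$, which rests on the constant-rank (subbundle) property; this is exactly what fails off $U$, explaining why the conclusion is purely local to the exactness locus.
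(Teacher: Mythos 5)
Your proof is correct, but note that the paper itself contains no proof of this statement: it is quoted from \cite{baum-bott}, Lemma~4.22, so the relevant comparison is with Baum--Bott's original argument rather than with anything in this paper. That argument is the classical one: compatibility \eqref{eq:compatible} implies that the connections preserve $\ker \varphi_k$ and $\im \varphi_k$, which by pointwise exactness are subbundles over $U$; the complex then splits into short exact sequences carrying induced compatible connections, for each of which the curvature is block-triangular in a smooth splitting, and the resulting Whitney-type factorizations of $\det\big(I+\Theta(D_k)\big)$ telescope in the alternating product \eqref{eq:mixed-chern}. Your route is genuinely different: you reduce via \eqref{butterfly}, \eqref{schmetterling} and \eqref{eq:mixed-chern} to $\mathrm{sdet}\big(I+\Theta(D_E)\big)=1$, observe that $D_{\End}\varphi=0$ forces $\Theta(D_E)\varphi=\varphi\,\Theta(D_E)$ (equivalently, $D_{\End}^2\alpha=\Theta(D_E)\alpha-\alpha\,\Theta(D_E)$ applied to $\alpha=\varphi$), build the smooth odd homotopy $\sigma=\varphi^*(\varphi\varphi^*+\varphi^*\varphi)^{-1}$ with $\varphi\sigma+\sigma\varphi=\Id$ from pointwise exactness, and kill each $\mathrm{str}\big(\Theta(D_E)^j\big)$ as the supertrace of a supercommutator of two odd elements. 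The sign issues you flag do go through with the paper's conventions: \eqref{rosemary} is exactly the super tensor product of the (supercommutative) algebra of forms with $\End(E)$, so graded cyclicity $\mathrm{str}(AB)=(-1)^{|A||B|}\mathrm{str}(BA)$ holds with respect to \emph{total} degree, and since $\varphi$ has form degree $0$ while $\Theta(D_E)^j$ has endomorphism degree $0$, every product you use reduces to a plain composition, so no hidden signs appear. As for what each approach buys: yours is induction-free, coordinate-free, and meshes with this paper's own machinery --- your $\sigma$ is fibrewise the sum of the minimal inverses $\sigma_k$ of Section~\ref{pomona}, and $\varphi\sigma+\sigma\varphi=\Id$ is the component identity \eqref{eq:idSigma} --- whereas the splitting argument is more elementary (no superdeterminant or supertrace formalism) and yields the finer conclusion that the individual Chern forms factor through the kernel bundles, not merely that the alternating product collapses.
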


\section{Baum-Bott theory}\label{bbres}

The theory of Baum-Bott residues was developed in \cite{baum-bott},
extending the theory of rank one foliations in \cite{baum-bott-mero}
to general foliations.
Part of this theory may also be found in i.e., \cite{suwa:book}, where it is developed from
a slightly different perspective, making use of \v{C}ech-de Rham cohomology.

The main outcome of Baum-Bott's theory is the fact that
high degree characteristic classes $\Phi(N\F)$ of $N \F$ localize around $\sing \F$, cf.\ the introduction.
This is a consequence of a vanishing theorem for the normal bundle of a regular foliation due to the existence of special connections.
Recall that a connection is said to be of \emph{type $(1, 0)$}, or a \emph{$(1,0)$-connection} if its $(0, 1)$-part equals $\dbar$.

\begin{df} \label{def:basic-connection} (\cite {baum-bott} - Definition 3.24) Let $\F$ be a regular foliation on $M$ and let $\varphi_0: TM \to  N\F$ be the canonical projection. A connection $D$ on $N \F$ is \textit{basic} if it is of type $(1,0)$ and
\begin{equation} \label{eq:basicCondition}
i(u)D( \varphi_0 v) =  \varphi_0 [u,v]
\end{equation}
for any smooth sections $u$ of $T \F$ and $v$ of $TM$.
\end{df}

It is not hard to see that basic connections always exist, see
\cite[\S 3]{baum-bott} and also Lemma~\ref{lemma:Dbasic} below.

\begin{thm}[Baum-Bott's Vanishing theorem,
  \cite{baum-bott} - Proposition ~3.27] \label{thm:vanishing}
  Let $\F$ be a regular foliation of rank $\kappa$ on a complex
  manifold $M$ of dimension $n$.
  If $D$ is a basic connection on $N \F$ and $\Theta(D)$ denotes its
  curvature form,  then
 $$\Phi\big(\Theta(D)\big) = 0 \quad \text{on} \quad M$$
 for every homogeneous symmetric polynomial $\Phi \in \C[z_1,\ldots,z_n]$ of degree $\ell$ with $n-\kappa < \ell \leq n$.
\end{thm}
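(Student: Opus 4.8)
The plan is to prove Baum-Bott's vanishing theorem (Theorem~\ref{thm:vanishing}) by showing that a basic connection on the normal bundle has curvature that is, in a suitable sense, degenerate along the leaves, so that the invariant polynomials $\Phi$ of degree $\ell > n-\kappa$ must vanish identically. The key structural input is the defining property \eqref{eq:basicCondition}, which controls the contraction of the curvature by tangent vectors of the foliation.

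First I would work locally over an open set where, by the Frobenius theorem cited in Section~\ref{subsec:prelim-foliations}, the foliation $\F$ is given by the fibers of a holomorphic submersion, so that $T\F$ is spanned by $\kappa$ commuting holomorphic vector fields $u_1,\ldots,u_\kappa$ tangent to the leaves, and $N\F$ is trivialized by $\varphi_0 v_1,\ldots,\varphi_0 v_{n-\kappa}$ for complementary fields $v_j$. The heart of the argument is to compute $i(u)\Theta(D)$ for $u$ a section of $T\F$. Since $D$ is of type $(1,0)$, its curvature $\Theta(D)$ has no $(0,2)$-component, and one checks using \eqref{eq:basicCondition} together with the flatness-type identity $\Theta(D) = D\circ D$ that contraction of $\Theta(D)$ by a leafwise field $u$ is essentially controlled by the Lie-bracket action $\varphi_0[u,\cdot]$. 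Concretely, I expect to establish that $i(u_i)\Theta(D)$ is, as an endomorphism-valued $1$-form, expressible through the infinitesimal action of the flow of $u_i$, and in particular that along the $\kappa$ independent leafwise directions the curvature $\Theta(D)$ has a large ``kernel'' in its form-degree.

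The decisive step is then a linear-algebra/dimension count: $\Theta(D)$ is an $\End(N\F)$-valued $2$-form, and the vanishing theorem should follow from showing that $\Theta(D)$, as a matrix of $2$-forms, lies in the image of contraction by the leafwise directions in such a way that any product of more than $n-\kappa$ of its entries vanishes. More precisely, the elementary symmetric function $\elem_\ell(\Theta(D))$ is built from $\ell\times\ell$ minors, i.e.\ from $\ell$-fold wedge products of the entries of $\Theta(D)$; if each such entry can be written so that its form part is annihilated by contraction with the $\kappa$ leafwise vector fields, then the forms effectively live on the $(n-\kappa)$-dimensional transverse directions, forcing any wedge of more than $n-\kappa$ of them to vanish. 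Since $\Phi = \widehat\Phi(\elem_1,\ldots,\elem_\ell)$ with $\ell > n-\kappa$, every monomial in $\Phi(\Theta(D))$ contains a factor $\elem_j$ with $j > n-\kappa$ or a total wedge-degree exceeding the transverse dimension, hence $\Phi(\Theta(D)) = 0$ on $M$.

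The hard part, I expect, will be making the contraction computation for $i(u)\Theta(D)$ precise while correctly tracking the superstructure sign conventions \eqref{thyme}--\eqref{tennis} and the basic condition \eqref{eq:basicCondition}, which involves the bracket $[u,v]$ rather than a pointwise expression; one must verify that the $(1,0)$-type assumption kills the $\dbar$-contributions and that involutivity of $T\F$ guarantees the leafwise contractions are mutually compatible (so that contracting by $u_1,\ldots,u_\kappa$ really cuts the transverse form-degree down to $n-\kappa$). Once that local statement is in hand, the global vanishing follows immediately since the claim is pointwise, and the invariance of $\Phi(\Theta(D))$ under the choice of basic connection is not even needed for the vanishing itself.
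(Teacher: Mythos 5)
The paper itself does not prove this statement---it is quoted from \cite{baum-bott} (Proposition~3.27)---so your proposal can only be measured against the classical argument. Your overall strategy (pass to local Frobenius coordinates $(x,y)$ in which the leaves are $\{y=\mathrm{const}\}$, show the curvature of a basic connection is degenerate in the leaf directions, then kill $\Phi(\Theta(D))$ by a degree count using $\ell>n-\kappa$) is indeed the classical route. However, the pivotal intermediate claim on which your ``decisive step'' is conditioned is false: for a basic connection it is \emph{not} true that $i(u)\Theta(D)=0$ for $u$ a section of $T\F$, i.e.\ the entries of the curvature matrix are not annihilated by contraction with the leafwise fields. Already on $M=\C^2$ with coordinates $(x,y)$, $T\F$ spanned by $\partial/\partial x$, and $N\F$ trivialized by the holomorphic frame $e=\varphi_0(\partial/\partial y)$, the connection defined by $De=f\,dy\otimes e$ is basic and of type $(1,0)$ for \emph{any} smooth function $f$ (the basic condition \eqref{eq:basicCondition} only forces the connection form to kill $T\F$, because $[u,\partial/\partial y]\in T\F$ for $u\in T\F$), yet its curvature $\Theta=df\wedge dy$ satisfies $i(\partial/\partial x)\Theta=(\partial f/\partial x)\,dy\neq 0$ in general. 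So the contraction computation you flag as ``the hard part'' cannot come out the way you expect, and since the theorem concerns arbitrary basic connections, no choice of special connection rescues the claim.

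What is true, and what the classical proof establishes, is the weaker statement that $\Theta(D)$ lies in the \emph{ideal}, in the exterior algebra of forms, generated by the transverse $(1,0)$-forms $dy_1,\dots,dy_{n-\kappa}$; equivalently, $\Theta(D)(X,Y)=0$ for all sections $X,Y$ of $T\F\oplus T^{0,1}M$ (a \emph{double}-contraction statement, not a single one). The standard way to get this is to shift the degeneracy argument from the curvature to the connection matrix: in the local holomorphic frame $e_j=\varphi_0(\partial/\partial y_j)$, the basic condition gives $i(u)\theta=0$ for $u\in T\F$, and the type $(1,0)$ hypothesis together with holomorphicity of the frame makes $\theta$ a matrix of $(1,0)$-forms; these two facts force the entries of $\theta$ to lie in the span of $dy_1,\dots,dy_{n-\kappa}$. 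Then $\Theta=d\theta+\theta\wedge\theta$ has entries in the ideal generated by the $dy_j$ (exterior differentiation destroys the contraction-vanishing---this is exactly where your version breaks---but it preserves ideal membership), and your degree count then goes through verbatim: every monomial of $\Phi(\Theta(D))$ is a wedge of $\ell>n-\kappa$ entries, each contributing a factor from the $(n-\kappa)$-dimensional span of the $dy_j$, hence vanishes. Note also that your phrase ``the forms effectively live on the transverse directions'' is imprecise---the entries still involve $d\bar x_i$---but this is harmless once ideal membership is in hand. So your plan is repairable, but as written the key step would fail.
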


\subsection{Baum-Bott residues}\label{lucia}

In the presence of singularities, one cannot work directly with connections on $N\F$, so the use of suitable resolutions is necessary.

Let $Z$ be a compact connected component of $\sing \F$. Then one can
find an open neighborhood $U$ of $Z$ in $M$ such that $U\cap \sing
\F=Z$ and $Z$ is a deformation retract of $U$, and a locally free resolution
 \begin{equation}\label{shoe}
    0 \to E_N \xrightarrow[]{\varphi_N}  \cdots\xrightarrow[]{\varphi_1}  E_0 \xrightarrow[]{\varphi_0} N \F
    \to 0
   \end{equation}
of $\mathcal A$-modules on $U$, where $\mathcal A$ denotes the sheaf of germs of (complex-valued) real analytic functions, cf.\ \cite[Proposition~6.3]{baum-bott}.
The data $\beta=(U,(E_N,\ldots,E_0), (\varphi_N, \ldots, \varphi_0))$ is a called a \emph{$Z$-sequence}.\footnote{In fact, in \cite{baum-bott} the notion $Z$-sequence has a slightly different meaning. There, the bundles in \eqref{shoe} are assumed
to be smooth vector bundles on $U$, and the morphisms in the complex
are only assumed to exist and be pointwise exact in $U\setminus Z$. It
is clear that a locally free resolution of $N\F$ of $\mathcal
A$-modules gives rise to a $Z$-sequence in this sense, cf.\
\cite[Section~7]{baum-bott}.}

Now assume that the vector bundles $N\F|_{U\setminus Z}, E_0, \ldots, E_N$ are equipped with connections $D_{-1}, D_0, \ldots, D_N$. Following \cite{baum-bott} we say that the collection $(D_N, \ldots, D_{-1})$ is \emph{fitted} to $\beta$
if $D_{-1}$ is a basic connection and $(D_N, \ldots, D_{-1})$ is
compatible with \eqref{shoe} over $U \setminus \Sigma$ for some
compact neighborhood $\Sigma$ of $Z$, i.e., \eqref{eq:compatible}
holds over $U \setminus \Sigma$ .

From Theorem~ \ref{thm:vanishing} and Lemma ~\ref{lemma:exact-curvature} it follows that if  $(D_N, \ldots, D_{-1})$ is fitted to $\beta$ and $\Phi\in\C[z_1,\ldots, z_n]$ is a homogeneous symmetric polynomial of degree $\ell$ with $n-\kappa<\ell\leq n$, then
$$\Phi\big(\Theta(D_N) | \ldots  | \Theta(D_0)\big)=\Phi\big (\Theta (D_{-1})\big )$$
vanishes in $U\setminus \Sigma$, where $\Sigma$ is as above.
In particular, this is a closed compactly supported differential form on $U$.
Since $Z$ is a deformation retract of $U$, the homology groups of $U$ and $Z$ are naturally isomorphic. Composing this isomorphism with the Poincaré duality $H^{2\ell}_c(U,\C) \simeq H_{2n-2\ell}(U,\C)$ yields an isomorphism $H^{2\ell}_c(U,\C) \simeq H_{2n-2\ell}(Z,\C)$.
Now $\res^\Phi(\F;Z) \in H_{2n-2\ell}(Z,\C)$ is defined as the class of
  \begin{equation}\label{snowboard}
    \left ( \frac{i}{2\pi}\right )^n
   \Phi\big (\Theta(D_N)|\ldots|\Theta(D_0)\big )
 \end{equation}
 in $H^{2 \ell}_c(U,\C)$
 under this isomorphism.
It is proved in \cite[Sections~5,6,7]{baum-bott} that the class of \eqref{snowboard} is independent of the choice of $Z$-sequence and fitted connections, and that it only depends on the local behaviour of $\F$ around $Z$.

When $M$ is compact, the compactly supported $2 \ell$-form \eqref{snowboard}
extends naturally to a closed form on $M$. It follows from the
definition that the corresponding de Rham class
$\Res^\Phi(\F;Z) \in H^{2\ell}(M,\C)$, is the image of $\res^\Phi(\F;Z)$ under the composition of the map $\iota_* : H_{2n-2\ell}(Z,\C) \to H_{2n-2\ell}(M,\C)$ induced by the inclusion $\iota: Z \hookrightarrow M$ and the Poincaré duality $ H_{2n-2\ell}(M,\C) \simeq  H^{2\ell}(M,\C)$.

\section{Residue currents}\label{stream}

We say that a function $\chi:\R_{\geq 0}\to \R_{\geq 0}$ is a \emph{smooth approximant
  of the characteristic function} $\chi_{[1,\infty)}$ of the interval
  $[1,\infty)$ and write
  \[
    \chi \sim \chi_{[1,\infty)}
  \]
 if $\chi$ is smooth, increasing and $\chi(t) \equiv 0$ for $t \ll 1$ and
 $\chi(t) \equiv 1$ for $t \gg 1$.
\begin{remark}\label{elsa}
Note that if $\chi \sim \chi_{[1,\infty)}$ and $\widehat \chi = \chi^j$,
then $\widehat \chi \sim \chi_{[1,\infty)}$ and
\begin{equation*}
d\widehat\chi =  j\chi^{j-1} d\chi ~ \text{ and } \dbar\widehat\chi =  j\chi^{j-1} \dbar\chi
\end{equation*}
\end{remark}

\subsection{Pseudomeromorphic currents}\label{hemma}
Let $f$ be a (generically nonvanishing) holomorphic function on a
(connected)
complex manifold $M$.
Herrera and Lieberman, \cite{HL}, proved that the \emph{principal value}
\begin{equation*}
\lim_{\epsilon\to 0}\int_{|f|^2>\epsilon}\frac{\xi}{f}
\end{equation*}
exists for test forms $\xi$ and defines a current, that we with a slight abuse of notation denote by $1/f$.
It follows that $\dbar(1/f)$ is a current with support on the zero set
$Z(f)$ of $f$; such a current is called a \emph{residue current}.
Assume that $\chi\sim\chi_{[1,\infty)}$ and that
 $s$ is a generically
nonvanishing holomorphic section of a Hermitian vector bundle such that
$Z(f)\subseteq \{s = 0\}$.
Then
\begin{equation*}
  \frac{1}{f}=\lim_{\epsilon\to 0} \frac{\chi(|s|^2/\epsilon)}f ~~~~~ \text{
    and } ~~~~~
  \dbar\left (\frac{1}{f}\right )=\lim_{\epsilon\to 0} \frac{\dbar\chi(|s|^2/\epsilon)}f,
  \end{equation*}
  see, e.g., \cite{andersson-wulcan:asm}.
  In particular, the limits are independent of $\chi$ and $s$. Note
  that $\chi(|s|^2/\epsilon)$ vanishes identically in a neighborhood
  of $\{s=0\}$, so that $\chi(|s|^2/\epsilon)/f$ and $\dbar\chi(|s|^2/\epsilon)/f$ are smooth.
More generally, if $f$ is a generically non-vanishing holomorphic section of a line bundle $L\to M$ and $\omega$ is an $L$-valued smooth form, then the current $\omega \slash f$ is well-defined. Such currents are called \textit{semi-meromorphic}, cf.\ \cite[Section~4]{andersson-wulcan:asm}.

In the literature there are various generalizations of principal value currents and residue
currents.
In particular, Coleff and Herrera \cite{CH} introduced
products like
\begin{equation}\label{apan2}
  \frac{1}{f_m}\cdots \frac{1}{f_{r+1}} \dbar \frac{1}{f_{r}} \wedge \cdots \wedge \dbar \frac{1}{f_{1}}.
\end{equation}
If $\codim Z_f=m$, where $Z_f=\{f_1=\cdots = f_m=0\}$, then the \emph{Coleff-Herrera product} $\dbar(1/f_m)\wedge\cdots\wedge \dbar (1/f_1)$ is anti-commutative in the  factors and has support on $Z_f$.

One application of residue currents in general has been to provide explicit or canonical representatives
of cohomology classes. In particular, the Coleff-Herrera product has been used to
provide explicit canonical representatives in so-called moderate cohomology, \cite{DS}.
\begin{ex}\label{supper}
  Assume that $f_1,\ldots, f_n$ are holomorphic functions in some neighborhood $U$ of $p\in M$, such that $Z_f=\{p\}$. Let $\eta$ be a holomorphic $(n,0)$-form on $U$.
  Then the action of
  $1/(2\pi i)^n\dbar (1/f_n)\wedge \cdots \wedge \dbar (1/f_1)$ on $\eta$ is given by the integral
    \[
   \frac{1}{(2\pi i)^n}   \int_{\Gamma_\epsilon} \frac{\eta}{f_1\cdots f_n},
      \]
      where $\Gamma_\epsilon:=\{|f_1|=\epsilon, \ldots, |f_n|=\epsilon\}$ is oriented
      by $d(\arg f_n)\wedge \cdots \wedge d(\arg f_1)>0$, for a sufficiently small
      $\epsilon >0$.
This equals, by definition,
the Grothendieck residue
\[
\Res_p \left [\frac{\eta}{f_1\cdots f_n}\right],
\]
see \cite[Ch. 5]{griffiths-harris}.
        \end{ex}

        In \cite{andersson-wulcan:crelle} the sheaf  $\PM_M$ of {\it
          pseudomeromorphic currents} on $M$ was introduced in order to obtain a coherent approach to questions about residue and
principal value currents; it consists of direct images under holomorphic mappings of products of test forms
and currents like \eqref{apan2}.
See, e.g., \cite[Section~2.1]{andersson-wulcan:asm} for a precise
definition.
The sheaf $\PM_M$ is closed under $\partial$ and $\dbar$ and under multiplication by
smooth forms.
Pseudomeromorphic currents have a geometric
nature, similar to closed positive (or normal) currents.  For instance, they satisfy the following dimension principle,  see \cite[Corollary 2.4]{andersson-wulcan:crelle}.

\begin{prop} \label{prop:dimension-principle}
Let $\mu$ be a pseudomeromorphic current of bidegree $(p,q)$ on $M$.  If $\mu$ is supported on a subvariety of $M$ codimension strictly larger than
$q$,  then $\mu = 0$.
\end{prop}

The sheaf $\PM_M$ admits natural restrictions to constructible subsets
of $M$.
In particular, if $W$ is a subvariety of the open subset $U\subseteq M$,
and $s$ is a holomorphic section of a Hermitian vector bundle such that $\{s = 0\} = W$, then the restriction to
$U\setminus W$ of a pseudomeromorphic current $\mu$ on $U$ is
the pseudomeromorphic current on $U$ defined by
\[
    \1_{U\setminus W} \mu := \lim_{\epsilon \to 0} \chi(|s|^2/\epsilon) \mu|_{U},
\]
where $\chi \sim \chi_{[1,\infty)}$.
It follows that
    \[\1_{W} \mu := \mu -
    \1_{U\setminus W} \mu\]
  has support on $W$.
  These definitions are independent of the choice of $s$ and $\chi$.

\subsection{Almost semi-meromorphic currents}\label{asmsec}
We refer to \cite[Section~4]{andersson-wulcan:asm} for details of the
results mentioned in this section.

We say that a  current $a$ is \textit{almost semi-meromorphic} in $M$, $a\in ASM(M)$,  if there exists a modification $\pi: M' \to M$ and a semi-meromorphic current $\omega \slash f$  on $M'$ such that $a = \pi_* \big( \omega \slash f \big)$. More generally, if $E$ is a vector bundle over $M$, an $E$-valued current $a$ is \textit{almost semi-meromorphic} on $M$ if $ a = \pi_* \big( \omega \slash f \big)$, where $\pi$ is as above, $\omega$ is a smooth form with values in $L \otimes \pi^*E$ and $f$ is a holomorphic section of a line bundle $L\to M'$.

Clearly almost semi-meromorphic currents are pseudomeromorphic. In particular, if $a\in ASM(M)$, then $\partial a$ and $\dbar a$ are pseudomeromorphic currents on $M$.

\begin{lma}[Proposition~4.16 in \cite{andersson-wulcan:asm}]\label{gold}
  Assume that $a\in ASM (M)$ is smooth in $M\setminus W$, where $W$ is
  subvariety of $M$. Then $\partial a\in ASM (M)$ and $\1_{M\setminus W}\dbar a\in ASM(M)$.
\end{lma}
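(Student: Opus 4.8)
\emph{The plan} is to reduce the statement to the behaviour of $\partial$ and $\dbar$ on a semi-meromorphic current on a modification, and then to extract the almost semi-meromorphic part by hand, using the dimension principle and the standard extension property (SEP) of $ASM$ currents (i.e.\ $\1_V b=0$ for every $b\in ASM$ and every subvariety $V$ of positive codimension, which follows from the definition together with the corresponding property of semi-meromorphic currents). First I would write $a=\pi_*(\omega/f)$ for a modification $\pi\colon M'\to M$, a smooth form $\omega$, and a holomorphic section $f$ of a line bundle on $M'$, and set $u:=\omega/f$. Since $\pi$ is proper, $\pi_*$ commutes with $\partial$ and $\dbar$, and by definition $\pi_*$ maps semi-meromorphic currents into $ASM(M)$; thus it suffices to understand $\partial u$ and $\dbar u$ on $M'$.

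For the first assertion I would show that $\partial u$ is again semi-meromorphic, which gives $\partial a=\pi_*\partial u\in ASM(M)$ at once and, incidentally, uses no hypothesis on the smoothness of $a$. Working in local coordinates and a local frame, decompose $\omega=\sum_K h_K\,dx_K$, where each $h_K$ is a smooth function and $dx_K$ is a wedge of $dz_i$'s and $d\bar z_j$'s with constant coefficients, so that $\partial(dx_K)=0$ and $\partial u=\sum_K\partial(h_K/f)\wedge dx_K$. Here $h_K/f$ has bidegree $(0,0)$, and away from $\{f=0\}$ one has classically $\partial(h_K/f)=(\partial h_K)/f-h_K\,\partial f/f^2$. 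The difference between $\partial(h_K/f)$ and this semi-meromorphic current is therefore pseudomeromorphic, supported on $\{f=0\}$, and of bidegree $(1,0)$, hence of $\dbar$-degree $0$; since $\codim\{f=0\}\geq 1$, the dimension principle forces it to vanish. Consequently each $\partial(h_K/f)$, and hence $\partial u$, is semi-meromorphic.

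For the second assertion I would isolate the residue. Since $f$ is holomorphic, away from $\{f=0\}$ one has $\dbar u=\dbar\omega/f$, so $R:=\dbar u-\dbar\omega/f$ is pseudomeromorphic and supported on $\{f=0\}$; as $\dbar\omega/f$ is semi-meromorphic and thus carries no mass on $\{f=0\}$, in fact $R=\1_{\{f=0\}}\dbar u$ and $v:=\dbar\omega/f=\1_{M'\setminus\{f=0\}}\dbar u$. Pushing forward and using the SEP of the $ASM$ current $\pi_*v$ to rewrite $\1_{M\setminus W}\pi_*v=\pi_*v$, I obtain
\[
\1_{M\setminus W}\dbar a=\1_{M\setminus W}\pi_*(v+R)=\pi_*v+S,\qquad S:=\1_{M\setminus W}\pi_*R.
\]
Since $\pi_*v$ is the pushforward of a semi-meromorphic current it lies in $ASM(M)$, so the whole point is to show that the residue term $S$ vanishes.

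The vanishing of $S$ is the \textbf{main obstacle}, and it is precisely where the hypothesis that $a$ be smooth on $M\setminus W$ enters. By construction $\1_W S=0$, and $\supp S\subseteq\Sigma:=\pi(\{f=0\})$, which is a proper analytic subvariety of $M$ by the proper mapping theorem. On the open set $M\setminus W$ the current $\dbar a$ is smooth, so restricting the displayed identity gives $S|_{M\setminus W}=\dbar a|_{M\setminus W}-\pi_*v|_{M\setminus W}$, a difference of a smooth current and an $ASM$ current and hence $ASM$ on $M\setminus W$; being simultaneously supported on the proper subvariety $\Sigma\setminus W$, it must vanish by the SEP. Thus $\supp S\subseteq W$, and together with $\1_W S=0$ this yields $S=0$. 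Therefore $\1_{M\setminus W}\dbar a=\pi_*v\in ASM(M)$, completing the plan. The points needing careful verification are the commutations of $\pi_*$ with $\partial,\dbar$ and with the restriction operators and the two uses of the SEP; the one genuinely new input beyond the $\partial$-case is the support-plus-SEP argument that kills $S$.
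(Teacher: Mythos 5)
Your proof is correct, but there is nothing in the paper to compare it against: the paper does not prove this lemma, it imports it wholesale as Proposition~4.16 of \cite{andersson-wulcan:asm} (the section opens with ``We refer to [AW18, Section~4] for details of the results mentioned in this section''). Judged on its own, your argument is a sound reconstruction of the external reference's proof from exactly the toolkit this paper does set up: write $a=\pi_*(\omega/f)$, use that $\pi_*$ commutes with $\partial$ and $\dbar$, prove the principal-value Leibniz identity $\partial(h_K/f)=(\partial h_K)/f-h_K\,\partial f/f^2$ by the dimension principle (the discrepancy is pseudomeromorphic, of $\dbar$-degree $0$, supported on $\{f=0\}$, hence zero), identify $\dbar(\omega/f)=\dbar\omega/f+R$ with $R$ the residue term supported on $\{f=0\}$, and kill $\1_{M\setminus W}\pi_*R$ by the support-plus-SEP argument --- which is indeed the only place the smoothness hypothesis enters, just as you say. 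Two points you gloss over are standard but worth naming: (i) the local identity gives $\partial u$ locally as $(\text{smooth})/f^2$, and one must observe that the local numerators patch to a global smooth form with values in $L^2$ (they agree generically, hence everywhere, since the current $\partial u$ is globally defined); (ii) the restriction operator $\1_V$ is applied to subvarieties such as $\Sigma\cap(M\setminus W)$ that need not be global zero sets of a single section of a Hermitian bundle --- this is fine because $\1_V$ is local and independent of the choice of local defining sections, but it goes slightly beyond the literal definition quoted in Section~4 of the paper. Neither is a gap; the proof stands.
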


Given $a\in ASM(M)$, let
$ZSS(a)$ (the \emph{Zariski-singular support}) denote the smallest Zariski-closed set $V\subset M$ such that $a$ is smooth outside $V$. The pseudomeromorphic current $r(a):=\1_{ZSS(a)} \dbar a$ is called the \emph{residue} of $a$.

Almost semi-meromorphic currents have the so-called \emph{standard
  extension property (SEP)} meaning that $\1_Wa=0$ in $U$ for each
subvariety $W\subset U$ of positive codimension, where $U$ is any open
set in $M$.
In particular, if $a\in ASM(M)$, $\chi \sim \chi_{[1,\infty)}$, and $s$ is any generically non-vanishing holomorphic section of a Hermitian
vector bundle over $M$,
then
\begin{equation*}
    \lim_{\epsilon\to 0} \chi(|s|^2/\epsilon) a= a.
\end{equation*}
It follows in view of Lemma ~\ref{gold} that, if $\{s=0\}\supset ZSS(a)$, then
\begin{equation*}
 r(a) = \lim_{\epsilon\to 0}  \dbar \chi(|s|^2/\epsilon) \wedge a = \lim_{\epsilon\to 0}  d \chi(|s|^2/\epsilon) \wedge a.
\end{equation*}

\begin{remark}\label{bronze}
  Note that it follows from above
  that if $a\in ASM(M)$ is smooth outside the subvariety $W\subset M$, $\chi \sim \chi_{[1,\infty)}$, and $s$ is a generically nonvanishing holomorphic section of a Hermitian vector bundle over $M$ such that $\{s=0\}\supset W$,
  then the smooth forms
  \[
\chi(|s|^2/\epsilon) a, ~~~~~ \partial \chi(|s|^2/\epsilon) \wedge a, ~~~~~ \dbar \chi(|s|^2/\epsilon) \wedge a 
    \]
    have limits as pseudomeromorphic currents and the limits are independent of the choice of $\chi$ and $s$.
 \end{remark}

If $a_1, a_2 \in ASM(M)$, then $a_1+a_2\in ASM(M)$, and moreover there
is a well-defined product $a_1\wedge a_2\in ASM(M)$, so that $ASM (M)$
is an algebra over smooth forms, see \cite[Section~4.1]{andersson-wulcan:asm}. Note that
if $\chi \sim \chi_{[1,\infty)}$ and $s$ is a generically nonvanishing
holomorphic section of a Hermitian vector bundle such that $\{s=0\}$
contains the Zariski-singular supports of $a_1$ and $a_2$,  then
$a_1\wedge a_2$ is the limit of the smooth forms $\chi(|s|^2/\epsilon) a_1\wedge a_2.$

  \subsection{Residue currents of Bochner-Martinelli type}\label{bmx}
  Let us describe the construction of residue currents in \cite{A},
  see also \cite[Example~4.18]{andersson-wulcan:asm}. Let $f$ be a
  holomorphic section of the dual bundle $E^*$ of a Hermitian vector bundle $E\to M$ and let $\sigma$ be the minimal inverse of $f$, i.e., the section of $E$ over $M\setminus Z_f$ of minimal norm such that $f \sigma=1$; here $Z_f$ denotes the zero set of $f$. Moreover consider the section
  $$
  u^f:= \sum_{\ell\geq 0} \sigma(\dbar\sigma)^\ell
  $$
  of $\Lambda (E\oplus T^*_{0,1}(M))$;
  note that $\dbar \sigma$ has even degree in $\Lambda (E\oplus
  T^*_{0,1}(M))$, cf.\ \cite[Section~1]{A}.
One can show that $\sigma$ has an extension as an almost
semi-meromorphic current on $M$, see, e.g., the proof of Lemma~2.1 in \cite{LW:chern-currents-sheaves}.
Thus, if $\chi\sim \chi_{[1,\infty)}$ and $s$ is a generically non-vanishing holomorphic section of a Hermitian vector bundle over $M$ such that $\{s=0\}\supset Z_f$, then
$$
  R^f := r(u^f)=\lim_{\epsilon \to 0} \dbar\chi (|s|^2/\epsilon)\wedge u^f
  $$
is a pseudomeromorphic current on $M$ with support on $Z_f$. We let
  $R^f_k$ denote the component of
  $R^f$ that takes values in $\Lambda^kE$. Then $R^f_k$ has bidegree $(0,k)$.
This current first appeared in \cite{A}. If $E$ is trivial and
equipped with the trivial metric, then the coefficients are residue currents of Bochner-Martinelli type in the sense of \cite{passare-tsikh-yger}.

\begin{ex}\label{complete}
  Assume that
    $f=f_1e_1^*+\cdots + f_m e_m^*$, where $e_1^*, \ldots, e_m^*$ is the dual frame of a local frame $e_1, \ldots, e_m$ for $E$. Moreover assume that the codimension of $Z_f$ is $m$ (so that $f$ defines a complete intersection), then
  $$
  R^f=R^f_m= \dbar \frac{1}{f_m}\wedge \cdots \wedge \dbar \frac{1}{f_1} \wedge e_1\wedge \cdots \wedge e_m,
  $$
  see \cite[Theorem~1.7]{A} and \cite [Theorem~4.1]{passare-tsikh-yger}.
    \end{ex}

\section{Construction of connections}\label{sec:basic-connection}

Assume that $\F$ is a holomorphic foliation of rank $\kappa$ on $M$ and that
\eqref{eq:resolution-NF}
is a locally free resolution of $\Ok$-modules of the normal sheaf $N\F$ of $\F$ on $M$, i.e., the vector bundle complex is pointwise exact outside $\sing\F$ and the associated sheaf complex of holomorphic sections is exact. Here $\varphi_0: TM \to N \F$ is the canonical projection. From the exactness, it follows that $\im \varphi_1 = \ker \varphi_0 = T \F.$
Recall that $N\F$ is a holomorphic vector bundle over $M \setminus \sf$, where we use the shorthand notation $\sf=\sing\F$.
Throughout will use the
superstructure and sign conventions described in Section ~\ref{superduper}, with the convention $E_0=TM$ and $E_{-1}|_{M\setminus S}=N\F|_{M\setminus S}$.

In this section we construct a collection of  connections
that will be essential in the construction of Baum-Bott currents in Section ~\ref{sec:BB-currents}.
To this end, we assume that $TM, E_1, \ldots, E_N$ are equipped with Hermitian metrics and connections $D^{TM}, D_1,\ldots, D_N$,\footnote{Note that the connection $D_k$ is not necessarily the Chern connection of the metric on $E_k$.} respectively, such that $D^{TM}$ is a $(1,0)$-connection. Moreover, assume that $D^{TM}$ is
\textit{torsion-free},\footnote{Such connections $D^{TM}$ with the
  desired properties always exist. This can be easily seen to hold locally and a simple argument with a partition of unity yields a connection with the desired properties over the whole manifold $M$. If $M$ is K\"ahler, one can take $D^{TM}$ to be the Chern connection on $TM$, which is torsion-free.} that is
\begin{equation} \label{eq:torsion-free-def}
i(u) D^{TM} v - i(v) D^{TM} u = [u,v]
\end{equation}
for any pair of vector fields $u$ and $v$ on $M$.

Starting from these we will construct a basic connection $D_{basic}$ on $N\F|_{M\setminus \sf}$ and connections $\widetilde D_k$ on $E_k|_{M\setminus \sf}$ so that
$(\widetilde D_N, \ldots, \widetilde D_0, D_{basic})$ is compatible with \eqref{eq:resolution-NF} over $M\setminus \sf$. Next, by a choice of $\chi\sim\chi_{[1,\infty)}$ and a generically nonvanishing holomorphic section $s$ of a Hermitian vector bundle, we will construct connections $\widehat D^\epsilon_k$ on $M$ that coincide with $\widetilde D_k$ outside a neighborhood of $\{s=0\}$.
In particular, if we replace $M$ by a neighborhood of a compact connected component $Z$ such that $(M,(E_N, \ldots, E_1, TM), (\varphi_N, \ldots, \varphi_0))$
is a $Z$-sequence, then we can choose $s$ so that
$(\widehat D_N^\epsilon, \ldots, \widehat D_0^\epsilon, D_{basic})$ is fitted to it
for $\epsilon$ small enough.

\subsection{The connections $D_{basic}$ and $\widetilde D_k$
   on $M\setminus \sf$}\label{pomona}

For $k=1,\ldots,N$, we let $\sigma_k: E_{k-1}\to E_k$
be the \emph{minimal inverse}  of $\varphi_k$. These are smooth vector bundle morphisms defined outside the analytic set $Z_k\subset \sf$ where $\varphi_k$ does not have optimal rank and are determined by the following properties:
\begin{equation*}
\varphi_k \sigma_k \varphi_k = \varphi_k, \quad \im \sigma_k \perp \im \varphi_{k+1} \quad \text{and} \quad  \sigma_{k+1} \sigma_k = 0.
\end{equation*}
Note that $\varphi_k$ and
$\sigma_k$ have odd degree with respect to the superstructure.
It follows from the definition of $\sigma_k$ that in $M\setminus \sf$
\begin{equation} \label{eq:idSigma}
    I_{E_k} = \varphi_{k+1}\sigma_{k+1} + \sigma_k \varphi_k,
\end{equation}
for $1\leq k \leq N$, with the convention $\varphi_{N+1}=0$ and $\sigma_{N+1}=0$, and
\begin{equation}\label{ortho}
    \pi_0 := I-\varphi_1\sigma_1 : TM \to TM
\end{equation}
is the orthogonal projection onto $(\im \varphi_1)^\perp = (T
\F)^\perp\subset TM$.

\smallskip

 We start by modifying the connection $D^{TM}$ on $TM$ into a connection which will ultimately induce the desired  basic connection on $N \F|_{M \setminus \sf}$.
The vector bundle $TM$ carries a canonical one-form valued section, that we denote by $dz \cdot \partial/\partial z$, which is induced by the identity morphism on $TM$, viewed
as an element of $T^*M \otimes TM \cong \Hom(TM,TM)$. It is defined as
\begin{equation}\label{heart}
  dz \cdot \frac{\partial}{\partial z} := \sum_{k=1}^n dz_k \, \frac{\partial}{\partial z_k},
\end{equation}
where $(z_1,\dots,z_n)$ are local holomorphic coordinates on $M$. It is easy to see that the definition is independent of the choice of coordinates. It readily follows that, for a vector field $u$,  one has
\begin{equation} \label{eq:identity-TM}
i(u)(dz \cdot \partial/\partial z) = u.
\end{equation}

Now on $TM|_{M\setminus \sf}$ let
\begin{equation} \label{eq:Dvarphi1}
    D_0 = D^{TM} + \Dc \varphi_1\, \sigma_1 (dz \cdot \partial/\partial z),
  \end{equation}
  where $\Dc$ is as in Section ~\ref{superduper} and $D_{\End}$ is induced by $D^{TM}$ and $D_1$.
  Since $dz \cdot \partial/\partial z$ has bidegree $(1,0)$, it follows that
  \begin{equation}\label{boat}
    b:=\Dc \varphi_1 \sigma_1 (dz \cdot \partial/\partial z)
    \end{equation}
    is a smooth $(1,0)$-form in $M\setminus \sf$ with values in
$\End(TM|_{M\setminus \sf})$.
    Thus, since $D^{TM}$ is a $(1,0)$-connection,  $D_0$ is a well-defined $(1,0)$-connection on $TM|_{M\setminus \sf}$.

\begin{lma} \label{lemma:Dvarphi1}
    If $u$ is a smooth section of $T\F |_{M\setminus \sf}$ and $v$ is a smooth section of $TM|_{M\setminus \sf}$,
    then
    \begin{equation}\label{blue}
        i(u) D_0 v = [u,v] \mod \im \varphi_1.
    \end{equation}
\end{lma}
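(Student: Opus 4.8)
The plan is to write $D_0=D^{TM}+b$ with $b=\Dc\varphi_1\,\sigma_1(dz\cdot\partial/\partial z)$ as in \eqref{boat}, so that
\[
i(u)D_0v=i(u)D^{TM}v+i(u)\big(b(v)\big),
\]
and to reduce the two summands to $[u,v]$ modulo $\im\varphi_1$ using, respectively, the torsion-freeness of $D^{TM}$ and the tautological property of the canonical form $dz\cdot\partial/\partial z$. Throughout I would use that $u$ is a section of $T\F=\im\varphi_1$ over $M\setminus\sf$, so that \eqref{ortho} gives $\varphi_1\sigma_1u=u$.

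The key step, which I expect to be the main obstacle, is the correct evaluation of the correction term $i(u)(b(v))$. Since $\sigma_1$ and $\Dc\varphi_1$ are scalar bundle maps and $dz\cdot\partial/\partial z=\sum_k dz_k\,\partial/\partial z_k$, the $dz_k$ must be kept inert as the form degree of $b$ while $\partial/\partial z_k$ is fed through $\sigma_1$ into $\Dc\varphi_1$; by \eqref{jewel} this reads
\[
b(v)=-\sum_k dz_k\, i(v)D_\End\varphi_1\big(\sigma_1(\partial/\partial z_k)\big).
\]
Now I would contract by $u$: because the scalars $i(u)dz_k$ pull out and $\sum_k(i(u)dz_k)\,\partial/\partial z_k=i(u)(dz\cdot\partial/\partial z)=u$ by \eqref{eq:identity-TM}, this collapses to the single clean term
\[
i(u)\big(b(v)\big)=-i(v)D_\End\varphi_1(\sigma_1u).
\]
The delicate point here is precisely that it is the outer contraction $i(u)$, and not the inner $i(v)$, that acts on the spectator factor $dz_k$; getting this bookkeeping right is what makes the canonical form do its job and produces a single term rather than an unwanted two-term combination.

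It then remains to match the two pieces modulo $\im\varphi_1$. The torsion-free condition \eqref{eq:torsion-free-def} gives $i(u)D^{TM}v=[u,v]+i(v)D^{TM}u$. On the other hand, applying the Leibniz rule \eqref{meat} for $D_\End$ to $\zeta=\sigma_1u$, with the superstructure sign for the odd morphism $\varphi_1$ (so that $D_\End\varphi_1(\zeta)=D^{TM}(\varphi_1\zeta)+\varphi_1(D_1\zeta)$), and using $\varphi_1\sigma_1u=u$, yields
\[
i(v)D_\End\varphi_1(\sigma_1u)=i(v)D^{TM}u+\varphi_1\big(i(v)D_1\sigma_1u\big).
\]
Combining the three displays, the terms $i(v)D^{TM}u$ cancel and I am left with
\[
i(u)D_0v=[u,v]-\varphi_1\big(i(v)D_1\sigma_1u\big)\equiv[u,v]\mod\im\varphi_1,
\]
the remainder lying in $\im\varphi_1$ by construction. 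Apart from the contraction step, the only points needing care are the superstructure sign in \eqref{meat} and the (harmless) dependence on $D_1$, which enters only through a term that is automatically $\varphi_1$ of a section of $E_1$.
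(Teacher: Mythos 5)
Your proposal is correct and follows essentially the same route as the paper's proof: the decomposition $D_0=D^{TM}+b$, the contraction identity $i(u)\big(b(v)\big)=-i(v)D_\End\varphi_1(\sigma_1u)$, the Leibniz-rule reduction of this to $-i(v)D^{TM}u$ modulo $\im\varphi_1$, and the appeal to torsion-freeness \eqref{eq:torsion-free-def}. The only variation is cosmetic: you invoke the projection identity $\varphi_1\sigma_1u=u$ and apply \eqref{meat} directly to $\sigma_1u$, whereas the paper writes $u=\varphi_1\beta$ locally and removes the resulting $\varphi_2\sigma_2\beta$ term via \eqref{eq:idSigma} and $\Dc\varphi_1\varphi_2=\varphi_1\Dc\varphi_2$; likewise the sign of your remainder $\varphi_1\big(i(v)D_1\sigma_1u\big)$ (which in the paper's sign conventions comes out as $-\varphi_1\big(i(v)D_1\sigma_1u\big)$) is immaterial, since it lies in $\im\varphi_1$ either way.
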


\begin{proof}
Consider the section $ i(u) ( \Dc \varphi_1 \sigma_1 (dz \cdot
\partial/\partial z))$ of $\End (TM|_{M\setminus \sf})$. By
\eqref{marit}, \eqref{parsley}, \eqref{tennis}, and
\eqref{eq:identity-TM},
\begin{equation}\label{afternoon}
  i(u) ( \Dc \varphi_1 \sigma_1 (dz \cdot \partial/\partial z))= \Dc \varphi_1 \sigma_1 u.
\end{equation}
Since $\im \varphi_1 = T \F$, we can locally on $M\setminus S$ write $u = \varphi_1 \beta$ for some section $\beta$ of $E_1$, and thus by \eqref{eq:idSigma},
    \begin{equation*}
      \Dc \varphi_1 \sigma_1 u = \Dc \varphi_1 \sigma_1 \varphi_1 \beta  = \Dc \varphi_1  \beta - \Dc \varphi_1  \varphi_2 \sigma_2 \beta.
    \end{equation*}
    Since $\varphi_1  \varphi_2 = 0$, it follows from \eqref{eq:nablaLeibniz} that
    $$
    \Dc \varphi_1 \varphi_2 = \varphi_1 \Dc \varphi_2,
    $$
    and thus
 \begin{equation}\label{pink}
        i(u)\big (\Dc \varphi_1 \sigma_1 (dz \cdot \partial/\partial z) \big )= \Dc \varphi_1 \beta\mod \im \varphi_1.
      \end{equation}

      Now apply this to $v$. By \eqref{abba}, \eqref{jewel}, \eqref{meat}, \eqref{parsley},
      \begin{multline}\label{yellow}
        (\Dc \varphi_1 \beta) v =
        \Dc \varphi_1 (v\otimes\beta) = - i(v)D_\End \varphi_1 (\beta)=
        -i(v) (D_\End \varphi_1 \beta) = \\
        - i(v)\big (D^{TM}(\varphi_1 \beta)\big) - i(v) (\varphi_1 D_1\beta) =
        -i(v) D^{TM} u \mod \im \varphi_1.
      \end{multline}
     By combining \eqref{pink} and \eqref{yellow} we get
      \begin{equation*}
        i(u) D_0 v = i(u) D^{TM} v + i(u)\big ( (\Dc \varphi_1 \sigma_1 (dz \cdot \partial/\partial z) \big ) v = i(u) D^{TM} v - i(v) D^{TM} u  \mod \im\varphi_1.
    \end{equation*}
   Now \eqref{blue}  follows from the torsion-freeness of $D^{TM}$, cf.\  \eqref{eq:torsion-free-def}.
 \end{proof}

 \smallskip

 Next we will use the connection $D_0$ to define a basic connection on
 $N\F|_{M \setminus \sf}$. Recall that $\varphi_0$ is surjective over
 $M\setminus S$.
 For a section $\varphi_0 v$ of  $N\F|_{M \setminus \sf}$ we let
   \begin{equation} \label{eq:DbasicDef}
        D_{basic}(\varphi_0v) := - \varphi_0 D_0( \pi_0 v),
      \end{equation}
      where $\pi_0$ is as in \eqref{ortho}.
    This is a well-defined $(1,0)$-connection on $N\F|_{M \setminus \sf}$ since $D_0$ is a $(1,0)$-connection on $TM|_{M\setminus \sf}$ and $\varphi_0$ and $\pi_0$ have the same kernel, namely $\im \varphi_1$. The minus sign in \eqref{eq:DbasicDef} is necessary for $D_0$ to define a connection, in view of the superstructure, cf.\ \eqref{meat}.

\begin{lma} \label{lemma:Dbasic}
    The connection $D_{basic}$ is a basic connection on $N\F|_{M \setminus \sf}$.
\end{lma}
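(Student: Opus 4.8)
The goal is to verify the two conditions in Definition~\ref{def:basic-connection}: that $D_{basic}$ is a $(1,0)$-connection and that it satisfies the basic condition \eqref{eq:basicCondition}, namely $i(u)D_{basic}(\varphi_0 v) = \varphi_0[u,v]$ for $u$ a section of $T\F$ and $v$ a section of $TM$, all over $M\setminus \sf$. The type $(1,0)$ part is already settled in the paragraph preceding the lemma, since $D_0$ is a $(1,0)$-connection on $TM|_{M\setminus\sf}$ and the defining formula \eqref{eq:DbasicDef} passes this property down to $N\F|_{M\setminus\sf}$; so the plan is to concentrate entirely on the basic condition \eqref{eq:basicCondition}.

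The main tool will be Lemma~\ref{lemma:Dvarphi1}, which is precisely the analogue of the basic condition one level up, on $TM$ rather than on $N\F$: it gives $i(u)D_0 v \equiv [u,v] \pmod{\im\varphi_1}$ whenever $u$ is a section of $T\F|_{M\setminus\sf}$ and $v$ a section of $TM|_{M\setminus\sf}$. The plan is to start from the definition \eqref{eq:DbasicDef}, compute $i(u)D_{basic}(\varphi_0 v) = -i(u)\varphi_0 D_0(\pi_0 v)$, and push the contraction $i(u)$ inside past $\varphi_0$, keeping track of the superstructure sign via \eqref{parsley}. Since $\varphi_0$ is a holomorphic (hence $D_0$-independent) morphism carrying no form degree, and since $i(u)\varphi_0 = 0$, the contraction should commute with $\varphi_0$ up to a sign, yielding $i(u)D_{basic}(\varphi_0 v) = \varphi_0\, i(u) D_0(\pi_0 v)$ after the signs are accounted for. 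Then I apply Lemma~\ref{lemma:Dvarphi1} with $v$ replaced by $\pi_0 v$ to get $i(u)D_0(\pi_0 v) \equiv [u,\pi_0 v] \pmod{\im\varphi_1}$.

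At this point the key structural fact is that $\varphi_0$ annihilates $\im\varphi_1 = T\F = \ker\varphi_0$, so the congruence modulo $\im\varphi_1$ becomes an equality after applying $\varphi_0$: thus $i(u)D_{basic}(\varphi_0 v) = \varphi_0[u,\pi_0 v]$. The final step is then to show $\varphi_0[u,\pi_0 v] = \varphi_0[u,v]$, i.e.\ that replacing $v$ by its orthogonal projection $\pi_0 v$ does not change the answer. Writing $v = \pi_0 v + \varphi_1\sigma_1 v$ from \eqref{ortho}, this amounts to showing $\varphi_0[u, \varphi_1\sigma_1 v] = 0$. Here I would use that $w := \varphi_1\sigma_1 v$ is a section of $\im\varphi_1 = T\F$, that $u$ is also a section of $T\F$, and that $T\F$ is involutive (condition (i) in the definition of a foliation); hence $[u,w]$ is again a section of $T\F = \ker\varphi_0$, so $\varphi_0[u,w] = 0$.

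The step I expect to require the most care is the manipulation of the contraction $i(u)$ through $\varphi_0$ and $\pi_0$ with correct superstructure signs — in particular confirming that the sign in \eqref{eq:DbasicDef} combines with the sign from \eqref{parsley} so that the net outcome is exactly $\varphi_0 i(u) D_0(\pi_0 v)$ with the right sign, matching the sign conventions under which Lemma~\ref{lemma:Dvarphi1} was proved. The genuinely conceptual input, by contrast, is light: it is just the involutivity of $T\F$ together with the observation that the orthogonal-projection correction term lives in $T\F$ and is therefore invisible to $\varphi_0$ after bracketing with another section of $T\F$.
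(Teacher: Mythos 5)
Your proposal is correct and follows essentially the same route as the paper's proof: it reduces the basic condition to Lemma~\ref{lemma:Dvarphi1} via the superstructure-sign identity $i(u)D_{basic}(\varphi_0 v)=\varphi_0\, i(u)D_0(\pi_0 v)$, and then kills the correction term $\varphi_0[u,\varphi_1\sigma_1 v]$ using involutivity of $T\F=\im\varphi_1=\ker\varphi_0$. The paper's proof is exactly this argument, with the sign bookkeeping handled by citing \eqref{parsley} and \eqref{tennis}.
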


\begin{proof}
  We saw above that $D_{basic}$ is a $(1,0)$-connection.
  It remains to prove that \eqref{eq:basicCondition} holds for any smooth sections $u$ and $v$ of $T\F|_{M\setminus \sf}$ and $TM|_{M\setminus \sf}$, respectively.
  Due to the superstructure, cf.\ \eqref{parsley} and \eqref{tennis},
  \begin{equation}\label{elephant}
    i(u) D_{basic}( \varphi_0  v) = \varphi_0 i(u)  D_0( \pi_0 v).
    \end{equation}
    By Lemma ~\ref{lemma:Dvarphi1} the right hand side of \eqref{elephant} equals
    \begin{equation*}
      \varphi_0 [u,\pi_0 v] = \varphi_0 [u,v] - \varphi_0[u,  \varphi_1 \sigma_1 v]=  \varphi_0 [u,v],
    \end{equation*}
    and thus \eqref{eq:basicCondition} holds.
      In the last step we have used that $T \F = \im \varphi_1$ is involutive so that $[u, \varphi_1 \sigma_1v] \in \im \varphi_1= \ker \varphi_0$.
\end{proof}

The next step is to modify the connections $D_0, \ldots, D_N$ so that we get a collection of compatible connections on $M\setminus \sf$.
Let $D=D_\End$ be the connection on $\End (E|_{M\setminus \sf})$ induced by $(D_N, \ldots, D_0, D_{basic})$.
Note that, since $\varphi_k\varphi_{k+1}=0$, it follows from \eqref{wheat}  that, for $k\geq 0$,
\begin{equation}\label{street}
  D\varphi_k \varphi_{k+1} = \varphi_kD\varphi_{k+1}.
  \end{equation}
  For $k = 0, \ldots, N$, we let
\begin{equation} \label{eq:tildeD}
    \widetilde D_k = D_k - D\varphi_{k+1}\sigma_{k+1},
\end{equation}
where by convention we set $\varphi_{N+1}$ and $\sigma_{N+1}$ to be the zero map so that $\widetilde D_N=D_N$.
Since
\begin{equation}\label{badminton}
  a_k:=- D\varphi_{k+1}\sigma_{k+1}
\end{equation}
is a smooth $1$-form on $M\setminus \sf$ with values in $\End(E_k|_{M\setminus \sf})$,
$\widetilde D_k$ is a well-defined connection on $M\setminus \sf$.
In view of \eqref{eq:Dvarphi1} and \eqref{boat}, note that
\begin{equation} \label{eq:tildeD0b0a0}
  \widetilde {D}_0 = D^{TM} + b + a_0
    \quad \quad
     \widetilde D_k=D_k  + a_k, ~~ k\geq 1.
\end{equation}

\begin{remark}\label{albin}
  For each $k\geq 0$, note that if $D_k$ and $D_{k+1}$ are $(1,0)$-connections, then $a_k$ is a $(1,0)$-form and it follows that $\widetilde D_k$ is a $(1,0)$-connection. In particular, if we assume that $D_1, \ldots, D_N$ are $(1,0)$-connections, then so are $\widetilde D_0, \ldots, \widetilde D_N$. Indeed, recall from above that $D_0$ is a $(1,0)$-connection
  since $D^{TM}$ by assumption is a $(1,0)$-connection.
\end{remark}

It remains to show that  $(\widetilde D_N, \ldots, \widetilde D_0, D_{basic})$ is compatible with the complex \eqref{eq:resolution-NF} over $M \setminus \sf$.
Let us first check the compatibility condition \eqref{eq:compatible} for $1\leq k\leq N$. Let $\beta$ be a local section of $E_k|_{M \setminus \sf}$.
Then
    \begin{align*}
 ( \widetilde{D}_{k-1} \circ \varphi_k + \varphi_k \circ  \widetilde{D}_k)\beta  &= D_{k-1}(\varphi_k \beta)-D\varphi_k\sigma_k\varphi_k \beta
        + \varphi_k D_k\beta - \varphi_k D\varphi_{k+1} \sigma_{k+1} \beta
        \\ &= D\varphi_k(I-\sigma_k\varphi_k)\beta-\varphi_k D \varphi_{k+1} \sigma_{k+1} \beta \\
        &= D\varphi_k \varphi_{k+1} \sigma_{k+1} \beta - \varphi_k D \varphi_{k+1} \sigma_{k+1} \beta = 0,
    \end{align*}
    where we have used \eqref{meat}, \eqref{eq:idSigma}, and \eqref{street}.
    To check the compatibility condition  \eqref{eq:compatible} at level $0$, let $v$ be a section of $TM|_{M \setminus \sf}$.
    Then, using that $\varphi_0\varphi_1=0$, cf.\ \eqref{ortho},
    \begin{equation*}
      D_{basic}(\varphi_0 v)=-\varphi_0 D_0(\pi_0v) = - \varphi_0 D_0v+\varphi_0 D_0(\varphi_1\sigma_1) v =
      - \varphi_0 D_0v+\varphi_0 D\varphi_1\sigma_1 v = -\varphi_0\widetilde D_0v
      \end{equation*}
      To conclude \eqref{eq:compatible} holds for each $0\leq k \leq N$ (with the convention $\widetilde D_{-1}=D_{basic}$).

     We have now proved the following.

     \begin{prop} \label{prop:basic-connection-asm}
The collection of connections $(\widetilde D_N, \ldots, \widetilde D_0, D_{basic})$ on $M\setminus \sf$  is compatible with \eqref{eq:resolution-NF} over $M \setminus \sf$.
\end{prop}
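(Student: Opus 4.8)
The plan is to verify the compatibility condition \eqref{eq:compatible} directly for the collection $(\widetilde D_N,\ldots,\widetilde D_0,D_{basic})$ at every level $0\le k\le N$, treating the range $1\le k\le N$ and the bottom level $k=0$ separately; the latter must be handled on its own since it involves the specially constructed connection $D_{basic}$ rather than a modification of one of the given $D_k$. Throughout I would work with the induced connection $D=D_\End$ on $\End(E|_{M\setminus\sf})$ and let its Leibniz rule \eqref{wheat} do the sign bookkeeping, so that the computation is governed by the algebraic identities \eqref{eq:idSigma} and \eqref{street} rather than by explicit signs computed anew each time.

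For $1\le k\le N$, I would take a local section $\beta$ of $E_k|_{M\setminus\sf}$ and expand $(\widetilde D_{k-1}\circ\varphi_k+\varphi_k\circ\widetilde D_k)\beta$ using the definition \eqref{eq:tildeD}. The first step is to recognize, via \eqref{meat} and \eqref{wheat}, that the two unmodified terms combine as $D_{k-1}(\varphi_k\beta)+\varphi_k D_k\beta=D\varphi_k\,\beta$. The modification terms contribute $-D\varphi_k\,\sigma_k\varphi_k\beta$ and $-\varphi_k D\varphi_{k+1}\sigma_{k+1}\beta$. Combining the first of these with $D\varphi_k\,\beta$ gives $D\varphi_k(I-\sigma_k\varphi_k)\beta$, and the minimal-inverse identity \eqref{eq:idSigma} rewrites $I-\sigma_k\varphi_k$ as $\varphi_{k+1}\sigma_{k+1}$. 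Finally \eqref{street}, namely $D\varphi_k\,\varphi_{k+1}=\varphi_k D\varphi_{k+1}$, turns this into $\varphi_k D\varphi_{k+1}\sigma_{k+1}\beta$, which cancels the remaining term exactly. Hence the sum collapses to zero, which is precisely \eqref{eq:compatible} at level $k$.

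For the bottom level $k=0$ (with the convention $\widetilde D_{-1}=D_{basic}$) the argument is shorter. I would start from the definition \eqref{eq:DbasicDef}, $D_{basic}(\varphi_0 v)=-\varphi_0 D_0(\pi_0 v)$, substitute $\pi_0=I-\varphi_1\sigma_1$ from \eqref{ortho}, and use that $\varphi_0\varphi_1=0$ (the exactness of \eqref{eq:resolution-NF} gives $\im\varphi_1=\ker\varphi_0=T\F$). The cross term then reduces, again by Leibniz, to $\varphi_0 D\varphi_1\sigma_1 v$, and recognizing $D_0-D\varphi_1\sigma_1=\widetilde D_0$ from \eqref{eq:tildeD} yields $D_{basic}(\varphi_0 v)=-\varphi_0\widetilde D_0 v$, which is \eqref{eq:compatible} at $k=0$. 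Note that the fact that $D_{basic}$ is a genuine basic connection is already supplied by Lemma ~\ref{lemma:Dbasic}, so the Proposition reduces entirely to these two compatibility checks.

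The main obstacle is purely in controlling the superstructure signs when the odd-degree morphisms $\varphi_k$ and $\sigma_k$ are moved past the connection $D$ and past differential forms; a careless direct computation would accumulate error-prone factors of $(-1)^{\deg}$. The cleanest route, and the one I would follow, is to phrase every manipulation through $D=D_\End$ and its Leibniz rule \eqref{wheat} (equivalently \eqref{eq:nablaLeibniz}), reducing everything to the two structural identities \eqref{eq:idSigma} and \eqref{street}, which encode respectively the minimal-inverse decomposition of the identity and the consequence of $\varphi_k\varphi_{k+1}=0$.
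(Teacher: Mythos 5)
Your proposal is correct and follows essentially the same route as the paper: both verify \eqref{eq:compatible} for $1\le k\le N$ by expanding via \eqref{eq:tildeD} and \eqref{meat}, then collapsing the terms with \eqref{eq:idSigma} and \eqref{street}, and both handle level $0$ by substituting $\pi_0=I-\varphi_1\sigma_1$ into \eqref{eq:DbasicDef} and using $\varphi_0\varphi_1=0$ to identify $-\varphi_0\widetilde D_0$. Your remark that the basic-connection property is already covered by Lemma~\ref{lemma:Dbasic} also matches the paper's structure.
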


\smallskip

For future reference we notice that the connections above are almost semi-meromorphic in the following sense.

\begin{lma}\label{wedding}
  The $\End (E)$-valued forms $a_k$ and $b$ on $M\setminus \sf$,
  defined by \eqref{badminton} and \eqref{boat}, respectively,
  have continuations to $M$ as almost semi-meromorphic $\End (E)$-valued currents on $M$.
  \end{lma}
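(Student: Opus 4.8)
The statement to prove is Lemma~\ref{wedding}: that the forms $a_k = -D\varphi_{k+1}\sigma_{k+1}$ and $b = \Dc\varphi_1\,\sigma_1(dz\cdot\partial/\partial z)$, which are a priori only smooth on $M\setminus\sf$, extend to $M$ as almost semi-meromorphic $\End(E)$-valued currents. The key observation is that each of these forms is built out of three kinds of ingredients: (i) the morphisms $\varphi_k$ and the canonical section $dz\cdot\partial/\partial z$, which are \emph{holomorphic} (hence smooth on all of $M$); (ii) the minimal inverses $\sigma_{k}$, which are smooth on $M\setminus\sf$; and (iii) the connection-induced operators $D\varphi_k = D_{\End}\varphi_k$ and $\Dc\varphi_1$, which differentiate holomorphic morphisms using the \emph{smooth} connections $D_k$, $D^{TM}$, and $D_{basic}$. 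Since $ASM(M)$ is an algebra over smooth forms and is stable under the relevant operations, the whole problem reduces to showing that each $\sigma_k$ is almost semi-meromorphic on $M$ and that applying $D$ does not destroy this property.

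\textbf{First I would establish that the minimal inverses $\sigma_k$ are almost semi-meromorphic.} This is essentially the content already invoked in Section~\ref{bmx}: the excerpt states that ``$\sigma$ has an extension as an almost semi-meromorphic current on $M$'', citing the proof of Lemma~2.1 in \cite{LW:chern-currents-sheaves}. Concretely, $\sigma_k$ is the minimal inverse of the holomorphic morphism $\varphi_k$, and the same Cramer's-rule/Hermitian-metric argument used for the section $\sigma$ in the Bochner-Martinelli construction shows $\sigma_k\in ASM(M)$ with $ZSS(\sigma_k)\subseteq Z_k\subseteq\sf$. The precise statement is that there is a modification resolving the locus $Z_k$ where $\varphi_k$ drops rank, on which $\sigma_k$ becomes semi-meromorphic; pushing forward gives $\sigma_k\in ASM(M)$.

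\textbf{Next I would assemble the pieces.} For $b$, by \eqref{boat} and \eqref{jewel} we have $b$ expressed through $\Dc\varphi_1$ acting on $\sigma_1(dz\cdot\partial/\partial z)$; since $\varphi_1$ and $dz\cdot\partial/\partial z$ are holomorphic (hence smooth) and $\Dc\varphi_1 = (-1)^{\deg}i(u)D_{\End}\varphi_1$ only involves the smooth connections $D^{TM},D_1$ applied to the \emph{smooth-on-}$M$ morphism $\varphi_1$, the operator $\Dc\varphi_1$ is itself a smooth $\End(E)$-valued form on all of $M$. Wedging this smooth form against the almost semi-meromorphic $\sigma_1$ keeps us in $ASM(M)$, because $ASM(M)$ is a module over smooth forms (Section~\ref{asmsec}). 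For $a_k = -D\varphi_{k+1}\sigma_{k+1}$, the factor $D\varphi_{k+1}=D_{\End}\varphi_{k+1}$ is, by \eqref{meat}, built from the smooth connections $D_k,D_{k+1}$ differentiating the holomorphic morphism $\varphi_{k+1}$, hence is a smooth $\End(E)$-valued $1$-form on $M$; multiplying it by $\sigma_{k+1}\in ASM(M)$ via the superstructure product \eqref{rosemary} again lands in $ASM(M)$. In both cases the resulting current restricts on $M\setminus\sf$ to the smooth form originally defined there, so it is genuinely a continuation.

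\textbf{The main obstacle} is the subtle point that the connection-operators $D\varphi_{k+1}$ and $\Dc\varphi_1$ appearing in $a_k,b$ are themselves defined through $D=D_{\End}$ induced by the collection $(\widetilde D_N,\dots,\widetilde D_0,D_{basic})$, which on the face of it only lives on $M\setminus\sf$. One must check that the combination $D\varphi_{k+1}$ (and likewise $\Dc\varphi_1$) does \emph{not} involve the singular parts of those connections in a way that introduces extra singularities along $\sf$. The clean way to see this is to observe, via \eqref{meat} and \eqref{eq:DEndDependence}, that $D_{\End}\varphi_{k+1}=D_{D_{k+1},D_k}\varphi_{k+1}$ depends only on the connections $D_k,D_{k+1}$ on the \emph{individual} bundles $E_k,E_{k+1}$ (not on the $\sigma$-corrections in \eqref{eq:tildeD}), because $\varphi_{k+1}$ is holomorphic and the differentiation is determined by the connections on source and target alone. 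Since the relevant $D_k$ (for $k\ge1$) and $D^{TM}$ are smooth on all of $M$ by hypothesis, $D\varphi_{k+1}$ and $\Dc\varphi_1$ are smooth on $M$, and the only almost-semi-meromorphic factors are the $\sigma$'s. Once this dependence is pinned down, the conclusion $a_k,b\in ASM(M)$ follows immediately from the module structure of $ASM(M)$ over smooth forms.
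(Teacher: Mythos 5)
Your overall strategy coincides with the paper's: first establish that the minimal inverses $\sigma_k$ extend to $M$ as almost semi-meromorphic sections of $\End(E)$ (citing the same source), then exhibit $b$ and the $a_k$ as products of forms that are smooth on all of $M$ with almost semi-meromorphic factors. Your treatment of $b$ and of $a_k$ for $k\geq 1$ is correct. The problem is your resolution of what you call the main obstacle, which fails exactly in the one case where the obstacle is real, namely $a_0$. You assert that the relevant connections entering $D\varphi_1$ are ``$D_k$ (for $k\ge1$) and $D^{TM}$,'' all smooth on $M$, so that ``the only almost-semi-meromorphic factors are the $\sigma$'s.'' This is a misreading of the construction: in \eqref{eq:tildeD} and \eqref{badminton}, $D=D_{\End}$ is induced by the collection $(D_N,\ldots,D_1,D_0,D_{basic})$, where the connection on $E_0=TM$ is \emph{not} $D^{TM}$ but the modified connection $D_0=D^{TM}+b$ of \eqref{eq:Dvarphi1}, which is defined only on $M\setminus \sing\F$ and is singular along $\sing\F$ (this is also visible in \eqref{eq:tildeD0b0a0}, where $\widetilde D_0=D^{TM}+b+a_0$). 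Hence $D\varphi_1=D_{D_1,D_0}\varphi_1$ is \emph{not} a smooth form on $M$, and your argument for $a_0$ does not go through as written.

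The gap is concretely repaired by the decomposition that constitutes the main computation in the paper's proof. By \eqref{meat}, since $\varphi_1$ has odd degree,
\begin{equation*}
  D_{D_1,D_0}\varphi_1 \;=\; D_0\circ\varphi_1+\varphi_1\circ D_1
  \;=\; D_{D_1,D^{TM}}\varphi_1+b\,\varphi_1 ,
\end{equation*}
so that
\begin{equation*}
  a_0 \;=\; -\,D_{D_1,D^{TM}}\varphi_1\,\sigma_1 \;-\; b\,\varphi_1\sigma_1 .
\end{equation*}
Here $D_{D_1,D^{TM}}\varphi_1$ is smooth on $M$ (smooth connections differentiating the holomorphic morphism $\varphi_1$), $\sigma_1$ is almost semi-meromorphic, $\varphi_1$ is holomorphic, and $b$ is almost semi-meromorphic by your (correct) argument for $b$; since $ASM(M)$ is an algebra closed under multiplication by smooth forms, both terms, and hence $a_0$, lie in $ASM(M)$. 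In short: you correctly identified that one must check the singular parts of the modified connections do not enter, but the clean claim you base this on is false for $k=0$; an extra one-line expansion of $D_0=D^{TM}+b$ is genuinely needed there.
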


  Morally, this means that
  the $\End (E)$-valued connections $\widetilde D_k$ and $D_{basic}$ can be continued to singular connections on $M$ of the form $D+\alpha$, where $D$ is a smooth $\End (E)$-valued connection on $M$ and $\alpha$ is an almost semi-meromorphic section of $\End (E)$.

  \begin{proof}
    The main ingredient in the proof is the fact that the mappings
    $\sigma_k$ can be continued as almost semi-meromorphic sections of
    $\End (E)$, see, e.g., the proof of Lemma ~2.1 in
    \cite{LW:chern-currents-sheaves}.

    Recall that $a_0 = - D_{D_1,D_0} \varphi_1 \sigma_1$, where we have used the notation from \eqref{eq:DEndDependence}.
    By \eqref{eq:Dvarphi1} and \eqref{boat}, $D_0 = D^{TM} + b$.
    In view of \eqref{meat}, we get that
    \begin{equation}
    a_0 = -D_{D_1,D^{TM}}\varphi_1 \sigma_1 - b\varphi_1 \sigma_1.
    \end{equation}
    Furthermore, $a_k = -D_{D_{k+1},D_k} \varphi_{k+1}\sigma_{k+1}$ for $k \geq 1$.
    Since $D^{TM}, D_1,\ldots, D_N$ are (smooth) connections, the $\varphi_k$
    are holomorphic on $M$, the
    $\sigma_k$ are almost semi-meromorphic, and $ASM(M)$ is closed
    under multiplication by smooth forms, it follows that $b$ and
    $a_k$ are almost semi-meromorphic for $k\geq 0$, cf.\ Section~\ref{asmsec}.
  \end{proof}

  \subsection{The connections $\widehat D^\epsilon_k$ on $M$}\label{evening}

Let $\chi\sim\chi_{[1,\infty)}$, let $s$ be a generically nonvanishing
holomorphic section of a Hermitian vector bundle over $M$ such that $\sf
\subset \{s=0\}$, let $\epsilon>0$, let
\begin{equation}\label{winter}
\chi_\epsilon= \chi(|s|^2/\epsilon),
\end{equation}
and let $\Sigma_\epsilon$ denote the closure of $\{\chi_\epsilon < 1\}$ in $M$.
If $\chi(t)=1$ for $t\geq T$, then note that $\{\chi_\epsilon < 1\} \subset \{|s|^2 < T\epsilon\}$, so that $\Sigma_\epsilon$ is a kind of tubular neighborhood of $\sf$.
Moreover, $\bigcap_{\epsilon > 0} \Sigma_\epsilon = \{s = 0\} \supset \sf$.
\begin{remark}\label{christmas}
  If $\rho$ is the rank of $\varphi_1$ in \eqref{eq:resolution-NF}, we can choose $s$ as the section $\det^\rho \varphi_1$ of $\Lambda^\rho E_1^*\otimes \Lambda^\rho TM$; then, in fact, $\{s=0\} =\sf.$
\end{remark}

Set
\begin{equation} \label{eq:hatD}
    \widehat{D}_0^\epsilon= \chi_\epsilon \widetilde{D}_0 + (1-\chi_\epsilon) D^{TM} \quad \text{and} \quad \widehat{D}_k^\epsilon=\chi_\epsilon \widetilde{D}_k+ (1-\chi_\epsilon) D_k, \text{ for } k=1,\ldots,N,
\end{equation}
where the $\widetilde{D}_k$ are the connections defined in
\eqref{eq:tildeD}. Note that
$\widehat{D}_N^\epsilon,\dots,\widehat{D}_0^\epsilon$ are connections
on $M$, and that $\widehat{D}_k^\epsilon = \widetilde D_k$ in
$M\setminus \Sigma_\epsilon$ for $k=0,\ldots, N$.
 Since   $(\widetilde D_N, \ldots, \widetilde D_0, D_{basic})$ is compatible with \eqref{eq:resolution-NF} in $M\setminus \sf$ by Proposition ~\ref{prop:basic-connection-asm}, it follows as in Section ~\ref{lucia} that if $\Phi\in \C[z_1,\ldots, z_n]$ is a homogeneous symmetric polynomial of degree $\ell$ with $n-\kappa<\ell\leq n$, then
      \begin{equation}\label{parkman}
    \left (\frac{i}{2\pi}\right )^\ell \Phi\big (\Theta(\widehat{D}_N^\epsilon) | \ldots | \Theta(\widehat{D}_0^\epsilon)\big )
    \end{equation}
    is a closed form of degree $2\ell$ with support in
        $\Sigma_\epsilon$.

\begin{remark}\label{axel}
  Note in view of Remark ~\ref{albin} that $\widehat D^\epsilon_k$ is
  a $(1,0)$-connection if $D_k$ and $D_{k+1}$ are. Also recall from
  above that $D_0$ is a $(1,0)$-connection since $D^{TM}$ is.
  Hence, if we assume that $D_1,\ldots, D_N$ are $(1,0)$-connections, then so are $\widehat D^\epsilon_0, \ldots, \widehat D^\epsilon_N$.
  \end{remark}

\smallskip

Let us fix a compact connected component $Z$ of $\sf$. Then, after
possibly shrinking $M$, we may assume that $\beta:=(M, (E_N, \ldots,
E_1, TM), (\varphi_N, \ldots, \varphi_0))$ is a $Z$-sequence. In
particular, $\sf=Z$ and thus
 we can choose $s$ so that
$\{s=0\} =Z,$ cf.\ Remark ~\ref{christmas}.
Then, for $\epsilon$ sufficiently small, $\Sigma_\epsilon$ is a compact neighborhood of $Z$.
Since $(\widehat{D}_N^\epsilon,\dots,\widehat{D}_0^\epsilon,D_{basic})= (\widetilde{D}_N^\epsilon,\dots,\widetilde{D}_0^\epsilon,D_{basic})$ is compatible with the  complex \eqref{eq:resolution-NF} in $M\setminus \Sigma_\epsilon$ by Proposition ~\ref{prop:basic-connection-asm} and $D_{basic}$ is a basic connection on $N \F|_{M \setminus Z}$ by Lemma ~\ref{lemma:Dbasic}, it follows that
$(\widehat{D}_N^\epsilon,\dots,\widehat{D}_0^\epsilon,D_{basic})$  is fitted to
$\beta$
for $\epsilon$ sufficiently small. In fact, one may check
that the construction of fitted connections in \cite{baum-bott} with some
minor adaptation agrees with
$(\widehat{D}_N^\epsilon,\dots,\widehat{D}_0^\epsilon,D_{basic})$.  Now in view of the definition of $\res^\Phi(\F;Z)$, see Section ~\ref{lucia}, we get the following.

 \begin{lma}\label{lussekatt}
   Assume that $(M, (E_N, \ldots, E_1, TM), (\varphi_N, \ldots, \varphi_0))$ is a $Z$-sequence and that $s$ is a holomorphic section of a Hermitian vector bundle such that $\{s=0\}=Z$.
   Then, for sufficiently small $\epsilon>0$, the form \eqref{parkman} represents the Baum-Bott residue $\res^\Phi(\F;Z) \in H_{2n-2\ell}(Z,\C)$.
\end{lma}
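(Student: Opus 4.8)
The plan is to verify directly that, for $\epsilon$ small enough, the collection $(\widehat D_N^\epsilon, \ldots, \widehat D_0^\epsilon, D_{basic})$ is \emph{fitted} to the $Z$-sequence $\beta$ in the sense of Section~\ref{lucia}, and then to invoke the very definition of the homological Baum-Bott residue to conclude that the characteristic form \eqref{parkman} represents $\res^\Phi(\F;Z)$. Thus the lemma is essentially a bookkeeping consolidation of the facts already established in Section~\ref{sec:basic-connection}, together with the independence of $\res^\Phi(\F;Z)$ on the choice of fitted connections.

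First I would recall that being fitted to $\beta$ requires two things: that the last connection be a basic connection on $N\F$ away from $Z$, and that the whole collection be compatible with the resolution over $M\setminus\Sigma$ for some compact neighborhood $\Sigma$ of $Z$. The first requirement is exactly Lemma~\ref{lemma:Dbasic}, which gives that $D_{basic}$ is a basic connection on $N\F|_{M\setminus Z}$ (recall that $\sf=Z$ after shrinking $M$). For the second, I would take $\Sigma=\Sigma_\epsilon$. Since $\{s=0\}=Z$ with $Z$ compact and $\bigcap_{\epsilon>0}\Sigma_\epsilon=\{s=0\}=Z$, the set $\Sigma_\epsilon$ is a compact neighborhood of $Z$ for all sufficiently small $\epsilon$; this is precisely why the conclusion is stated only for small $\epsilon$.

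The key step is compatibility. By the definition \eqref{eq:hatD} one has $\widehat D_k^\epsilon=\widetilde D_k$ on $M\setminus\Sigma_\epsilon$ for every $k$. Since Proposition~\ref{prop:basic-connection-asm} asserts that $(\widetilde D_N,\ldots,\widetilde D_0,D_{basic})$ is compatible with \eqref{eq:resolution-NF} over all of $M\setminus\sf$, the collection $(\widehat D_N^\epsilon,\ldots,\widehat D_0^\epsilon,D_{basic})$ satisfies the compatibility relations \eqref{eq:compatible} over $M\setminus\Sigma_\epsilon$. Combined with the previous paragraph, this shows that the collection is fitted to $\beta$.

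Finally, the construction of $\res^\Phi(\F;Z)$ in Section~\ref{lucia} says precisely that, for any fitted collection, the associated characteristic form is closed and compactly supported in $M$, and that its class under the isomorphism $H^{2\ell}_c(M,\C)\simeq H_{2n-2\ell}(Z,\C)$ is $\res^\Phi(\F;Z)$; since \eqref{parkman} is exactly this characteristic form for the present collection, this finishes the proof. The only point I expect to require care is reconciling this explicitly constructed fitted collection with Baum-Bott's original prescription, but since the defining class is independent of the choice of fitted connections, exhibiting one fitted collection suffices, and no further comparison is needed.
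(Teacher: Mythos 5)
Your proof is correct and takes essentially the same route as the paper: you verify that $(\widehat D_N^\epsilon,\ldots,\widehat D_0^\epsilon, D_{basic})$ is fitted to the $Z$-sequence by combining Proposition~\ref{prop:basic-connection-asm}, Lemma~\ref{lemma:Dbasic}, and the fact that $\widehat D_k^\epsilon=\widetilde D_k$ outside $\Sigma_\epsilon$, and then conclude via the definition of $\res^\Phi(\F;Z)$ and its independence of the choice of fitted connections. Your inference that $\Sigma_\epsilon$ is a compact neighborhood of $Z$ for small $\epsilon$ is stated a bit loosely (a decreasing family of closed sets with compact intersection need not be eventually compact), but the paper asserts exactly the same fact without proof, so you match its level of detail.
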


\section{Baum-Bott currents} \label{sec:BB-currents}

In this section we prove Theorem~ \ref{thm:BB-current}. We prove that
the limits of \eqref{parkman} as $\epsilon\to 0$ exist as
pseudomeromorphic currents with support on $S=\sing \F$; we call these
\emph{Baum-Bott (residue) currents}. If $Z$ is a compact connected
component of $S$, then the restriction to $Z$ represents the corresponding Baum-Bott residue.

We have the following more precise version of Theorem
~\ref{thm:BB-current}; for the part
about the independence of the choice of metrics and connections, see Corollary ~\ref{famine} below.

\begin{thm}\label{train}
  Let $M$ be a complex manifold of dimension $n$, let $\F$ be a  holomorphic foliation of rank $\kappa$ on $M$, and let $\Phi \in \C[z_1,\ldots,z_n]$ be a homogeneous symmetric polynomial of degree $\ell$ with $n-\kappa<\ell\leq n$.
  Assume that the normal sheaf $N\F$ of $\F$ admits a locally free
  resolution of the form \eqref{eq:resolution-NF}. Moreover, assume
  that $TM, E_1, \ldots, E_N$ are equipped with Hermitian metrics, and
  connections $D^{TM}, D_1, \ldots, D_N$, respectively, and assume
  that $D^{TM}$ is of type $(1,0)$ and torsion free. Let
  $\chi\sim\chi_{[1,\infty)}$ and let $s$ be a generically
  nonvanishing holomorphic section of a Hermitian vector bundle over
  $M$ such that $\{s=0\}\supset \sing \F$, and let $\widehat
  D^\epsilon_0, \ldots, \widehat D^\epsilon_N$ be the connections defined by \eqref{eq:hatD}.

  Then
      \begin{equation}\label{morning}
      R^\Phi:= \lim_{\epsilon \to 0} \left (\frac{i}{2\pi}\right )^\ell \Phi\big(\Theta(\widehat D_N^\epsilon) | \ldots | \Theta(\widehat D_0^\epsilon)\big )
    \end{equation}
     is a well-defined closed
     pseudomeromorphic current on $M$ of degree $2\ell$ with support on
     $\sing \F$. Moreover $R^\Phi$ only depends on the complex
     \eqref{eq:resolution-NF} and the Hermitian metrics and
     connections $D^{TM}, D_1, \ldots, D_N$ close to $\sing\F$, and in particular
     is independent of the choice of $\chi$ and $s$.
If we assume that also $D_1, \ldots, D_N$ are of type $(1,0)$, then $R^\Phi$ is a sum of currents of bidegree $(\ell+j, \ell-j)$ for $0\leq j\leq \ell$.

   Let $Z$ be a connected component of $\sing \F$ and let
   \begin{equation}\label{photo}
     R^\Phi_Z=\1_Z R^\Phi.
     \end{equation}
 If $Z$ is compact, then $R^\Phi_Z$ represents $\res^\Phi(\F; Z)$.
\end{thm}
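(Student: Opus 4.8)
The plan is to treat this as a soft, cohomological consequence of the parts of the theorem already proved — that the integrand of \eqref{morning}, which we abbreviate $r^\Phi(\widehat D^\epsilon):=(i/2\pi)^\ell\Phi(\Theta(\widehat D^\epsilon_N)|\cdots|\Theta(\widehat D^\epsilon_0))$, converges as $\epsilon\to 0$ to a closed pseudomeromorphic current $R^\Phi$ supported on $\sing\F$ and independent of $\chi$ and $s$ — combined with Lemma~\ref{lussekatt}. First I would localize. Fixing the compact component $Z$ and shrinking $M$, choose a neighborhood $U$ of $Z$ with $U\cap\sing\F=Z$ such that $Z$ is a deformation retract of $U$ and $(U,(E_N,\dots,E_1,TM),(\varphi_N,\dots,\varphi_0))$ is a $Z$-sequence. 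As $R^\Phi$ is independent of $\chi$ and $s$, I may take $s$ as in Remark~\ref{christmas}, so that $\{s=0\}\cap U=Z$. Then, on $U$, each $r^\Phi(\widehat D^\epsilon)$ is a closed $2\ell$-form supported in the compact set $\Sigma_\epsilon\subset U$, and by Lemma~\ref{lussekatt} it represents $\res^\Phi(\F;Z)$ under $H^{2\ell}_c(U,\C)\simeq H_{2n-2\ell}(Z,\C)$ for all $\epsilon\le\epsilon_0$, some $\epsilon_0>0$.

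Next I would identify the limit on $U$ with $R^\Phi_Z$. Since $R^\Phi$ is supported on $\sing\F$, the components $Z'\ne Z$ do not meet $U$, and $\Sigma_\epsilon\subset U$ for $\epsilon\le\epsilon_0$, it follows that $r^\Phi(\widehat D^\epsilon)\to R^\Phi|_U=\1_Z R^\Phi=R^\Phi_Z$ as currents on $U$; moreover $R^\Phi_Z$ is a closed current of degree $2\ell$ supported in the compact set $Z\subset\Sigma_{\epsilon_0}$, hence defines a class in $H^{2\ell}_c(U,\C)$, since compactly supported currents compute $H^\bullet_c(U,\C)$.

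The core of the argument is then the soft principle that a weakly convergent sequence of closed currents, all representing a fixed class in $H^{2\ell}_c(U,\C)$ and all supported in a fixed compact subset of $U$, has a limit representing the same class. To run it, pair with an arbitrary closed smooth form $\phi$ of degree $2n-2\ell$ on $U$, and fix a cutoff $\psi$ with compact support in $U$ and $\psi\equiv1$ on a neighborhood of $\Sigma_{\epsilon_0}$. Because $r^\Phi(\widehat D^\epsilon)$ (for $\epsilon\le\epsilon_0$) and $R^\Phi_Z$ are supported where $\psi\equiv1$, one has $\langle r^\Phi(\widehat D^\epsilon),\phi\rangle=\langle r^\Phi(\widehat D^\epsilon),\psi\phi\rangle$ and $\langle R^\Phi_Z,\phi\rangle=\langle R^\Phi_Z,\psi\phi\rangle$; as $\psi\phi$ is a genuine test form, weak convergence yields $\langle r^\Phi(\widehat D^\epsilon),\phi\rangle\to\langle R^\Phi_Z,\phi\rangle$. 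On the other hand, $\phi$ and $r^\Phi(\widehat D^\epsilon)$ being closed and $[r^\Phi(\widehat D^\epsilon)]$ being the fixed class $\res^\Phi(\F;Z)$ for $\epsilon\le\epsilon_0$, the pairing $\langle r^\Phi(\widehat D^\epsilon),\phi\rangle$ is constant in $\epsilon$ and equals $\langle\res^\Phi(\F;Z),[\phi]\rangle$. Hence $\langle R^\Phi_Z,\phi\rangle=\langle\res^\Phi(\F;Z),[\phi]\rangle$ for every closed $\phi$, and non-degeneracy of the Poincar\'e pairing $H^{2\ell}_c(U,\C)\times H^{2n-2\ell}(U,\C)\to\C$ forces $R^\Phi_Z$ to represent $\res^\Phi(\F;Z)$.

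Granting the earlier parts of the theorem, the only delicate point I expect is the passage from weak convergence — which a priori tests only against compactly supported forms — to pairings against the closed, possibly non-compactly supported representatives of $H^{2n-2\ell}(U,\C)$. This is exactly what the uniform compactness of the supports $\Sigma_\epsilon\subset\Sigma_{\epsilon_0}$ and the cutoff $\psi$ are designed to handle, together with the verification that $R^\Phi|_U$ equals $R^\Phi_Z$ and does not absorb contributions from other components of $\sing\F$.
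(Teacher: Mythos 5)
Your proposal proves only the last assertion of the theorem. You declare at the outset that you will take as given ``the parts of the theorem already proved'' --- namely that the limit \eqref{morning} exists as a closed pseudomeromorphic current of degree $2\ell$, supported on $\sing\F$, locally determined, and independent of $\chi$ and $s$. But those parts are not proved anywhere earlier in the paper; they \emph{are} the theorem, and they carry essentially all of its analytic weight. What is available before Theorem~\ref{train} is only the construction of the connections $\widehat D^\epsilon_k$, the almost semi-meromorphic continuation of $b$ and $a_k$ (Lemma~\ref{wedding}), and Lemma~\ref{lussekatt}. The missing step --- which is the core of the paper's proof --- is to work in local trivializations, compute the curvature matrices $\widehat\Theta^\epsilon_k$ of $\widehat D^\epsilon_k$, and use Lemma~\ref{wedding} together with the algebra property of $ASM(M)$ and $d\chi_\epsilon\wedge d\chi_\epsilon=0$ to show that every entry of $\Phi(\Theta(\widehat D^\epsilon_N)|\ldots|\Theta(\widehat D^\epsilon_0))$ has the form
\begin{equation*}
A+\sum_{j\geq 1}\chi^j_\epsilon B_j'+\sum_{j\geq 1}\chi^{j-1}_\epsilon d\chi_\epsilon\wedge B_j'',
\end{equation*}
with $A$ smooth and $B_j', B_j''$ almost semi-meromorphic and smooth outside $\sing\F$; only then do Remarks~\ref{elsa} and~\ref{bronze} yield existence of the limit as a pseudomeromorphic current and its independence of $\chi$ and $s$, from which the support, locality, and bidegree statements follow. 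Your argument cannot even get started without this: already your localization step invokes the independence of $s$ (to arrange $\{s=0\}=Z$ via Remark~\ref{christmas}), which is part of what remains unproved. As it stands the proposal is circular.

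The part you do prove is correct and is in fact a careful fleshing-out of the paper's own (very brief) conclusion of that step: the paper also shrinks $M$ to a $Z$-sequence, chooses $s$ with $\{s=0\}=Z$, notes $R^\Phi_Z=R^\Phi$ there, and passes from Lemma~\ref{lussekatt} to the current $R^\Phi_Z$ ``by Poincar\'e duality.'' Your cutoff argument --- pairing against closed forms, using that all $r^\Phi(\widehat D^\epsilon)$ for small $\epsilon$ are supported in a fixed compact subset of $U$ and represent the same class in $H^{2\ell}_c(U,\C)$, then invoking non-degeneracy of the pairing $H^{2\ell}_c(U,\C)\times H^{2n-2\ell}(U,\C)\to\C$ --- is a legitimate and welcome expansion of that one-line deduction (one small repair: the supports are uniformly contained in $\{|s|^2\leq T\epsilon_0\}$ rather than literally in $\Sigma_{\epsilon_0}$, since $\chi$ need not be monotone). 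But to count as a proof of Theorem~\ref{train} you must supply the curvature computation and the pseudomeromorphic limit argument sketched above, not assume them.
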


\begin{proof}
  We partially follow the proof of
  \cite[Theorem~ 5.1]{LW:chern-currents-sheaves}.

  We first prove that the limit \eqref{morning} exists and is a pseudomeromorphic current. This is a local statement and we may therefore work in a local trivialization. Let $\theta^{TM}, \theta_1, \ldots, \theta_N$ be the connection matrices for $D^{TM}, D_1, \ldots, D_N$, respectively. Then the connection matrices for $\widehat D^\epsilon_k$ are given by
  \begin{equation}\label{secret}
    \widehat \theta^\epsilon_0= \theta^{TM}+\chi_\epsilon (b+a_0), \quad \quad
     \widehat\theta^\epsilon_k=\theta_k + \chi_\epsilon a_k, ~~ k\geq
     1,
\end{equation}
cf.\ \eqref{eq:tildeD0b0a0} and \eqref{eq:hatD}, where $b$ and $a_k$ are defined by \eqref{boat} and \eqref{badminton}, respectively.
 It follows that for $k\geq 1$ the curvature matrix equals
    \begin{equation*}
      \widehat \Theta_k^\epsilon= d \widehat \theta^\epsilon_k +(\widehat \theta^\epsilon_k)^2 =
      \Theta_k +  d(\chi_\epsilon a_k)+ \theta_k\wedge \chi_\epsilon a_k + \chi_\epsilon a_k \wedge \theta_k + \chi_\epsilon^2 a_k\wedge a_k,
      \end{equation*}
      and similarly
       \begin{equation*}
      \widehat \Theta_0^\epsilon=
      \Theta^{TM} +  d(\chi_\epsilon (b+a_0))+ \theta^{TM} \wedge \chi_\epsilon (b+a_0) + \chi_\epsilon (b+a_0) \wedge \theta^{TM} + \chi_\epsilon^2 (b+a_0)\wedge (b+a_0).
      \end{equation*}
      Since $b$ and $a_k$ are smooth forms in $M\setminus \sf$ that have continuations to $M$ as almost semi-meromorphic currents by Lemma ~\ref{wedding}, and $ASM(M)$ is an algebra, cf.\ Section ~\ref{asmsec}, it follows that, for $k=0, \ldots, N$, $\widehat \Theta^\epsilon_k$ is a matrix-valued form of the form
      \begin{equation}\label{bluewear}
   \widehat \Theta_k^\epsilon= \alpha_k+ \chi_\epsilon \beta_k' + \chi_\epsilon^2 \beta_k'' + d\chi_\epsilon \wedge \beta_k''',
   \end{equation}
   where $\alpha_k, \beta_k', \beta_k''$, and $\beta_k'''$ are
   independent of $\chi_\epsilon$, $\alpha_k$ is smooth,  and $\beta_k', \beta_k''$, and $\beta_k'''$ are almost semi-meromorphic currents that are smooth in $M\setminus \sf$. To see this, note that if $a\in ASM(M)$ is smooth outside $\sf$, then $\chi_\epsilon da = \chi_\epsilon \1_{M\setminus \sf} da$ and since $\1_{M\setminus \sf} da$ is almost semi-meromorphic, see Lemma ~\ref{gold}, then $\chi_\epsilon da$ is of the form $\chi_\epsilon \beta_k'$.

In view of \eqref{eq:mixed-chern}, note that $\Phi(\Theta(\widehat D_N^\epsilon) | \ldots | \Theta(\widehat D_0^\epsilon))$ is a polynomial in the entries of $\widehat \Theta_k^\epsilon$, $k=0 \ldots, N$. Since $ASM(M)$ is an algebra and $d\chi_\epsilon\wedge d\chi_\epsilon=0$, it follows that $\Phi(\Theta(\widehat D_N^\epsilon) | \ldots | \Theta(\widehat D_0^\epsilon))$ is of the form
\begin{equation}\label{whitecollar}
    A+ \sum_{j\geq 1} \chi_\epsilon^j B_j'+ \sum_{j\geq 1} \chi_\epsilon^{j-1} d\chi_\epsilon \wedge B_j'',
    \end{equation}
    where $A, B_j'$ and $B_j''$ are independent of $\chi_\epsilon$, $A$ is smooth, and $B_j'$ and $B_j''$ are almost semi-meromorphic currents that are smooth in $M\setminus \sf$.
    We conclude
    in view of Remarks ~\ref{elsa} and ~\ref{bronze}, that the limit
    as $\epsilon \to 0$ of each term in \eqref{whitecollar} exists as
    a pseudomeromorphic current and the limit is independent of the
    choice of $\chi$ and $s$. Hence \eqref{morning} is a well-defined
    pseudomeromorphic current independent of the choice of $\chi$ and
    $s$.
    Since it is independent of $s$ we may assume that $\{s=0\}=\sf$, cf.\ Remark ~\ref{christmas}. Then,  since \eqref{parkman} has support in $\Sigma_\epsilon$, see Section ~\ref{lucia}, it follows that
    $R^\Phi$ is a closed current of degree $2\ell$ with support in $\bigcap_{\epsilon > 0} \Sigma_\epsilon=\sf$.

Note that the connections $\widehat D^\epsilon_k$ are locally defined,
in the sense that on any open set $U$, the  $\widehat D^\epsilon_k$ only
depend on \eqref{eq:resolution-NF}, the Hermitian metrics, and the
connections $D^{TM}, D_1, \ldots, D_N$ on $U$, cf.\ Section
~\ref{sec:basic-connection}. It follows that $R^\Phi$ is locally
defined in the same sense.

Assume now that $D_1, \ldots, D_N$ are $(1,0)$-connections. Then so are  $\widehat D_{0}^{\epsilon}, \ldots, \widehat D_N^\epsilon$, see Remark ~\ref{axel}.
 Thus each $\widehat \Theta^{\epsilon}_{k}$ has components of bidegree $(1,1)$ and $(2,0)$. It follows that $\Phi\big(\Theta(\widehat D_N^\epsilon) | \ldots | \Theta(\widehat D_0^\epsilon)\big)$ only has components of
 bidegree $(\ell + j, \ell-j)$, $0\leq j\leq \ell$.
 Hence so has the limit $R^\Phi $.

\smallskip

Now let $Z$ be a compact connected component of $\sf$. Since $R^\Phi$
is locally defined, after possibly shrinking $M$ we may assume that
$(M, (E_N, \ldots, E_1, TM), (\varphi_N, \ldots, \varphi_0))$ is a
$Z$-sequence; then $Z$ is the only connected component of $\sf$ and
thus $R^\Phi_Z=R^\Phi$, cf.\ \eqref{photo}.
Since $R^\Phi$ is independent of $s$ we can choose $s$ so that
$\{s=0\}=Z$, cf.\ Remark ~\ref{christmas}. Now, by Lemma
~\ref{lussekatt}, the form \eqref{parkman} represents $\res^\Phi(\F; Z)$ for all $\epsilon>0$ sufficiently small.  Thus so does the limit $R^\Phi_Z$ by Poincar\'e duality.
\end{proof}

\subsection{Dependence on the metrics and connections}\label{homotopy}

The following result gives a description of how the
Baum-Bott currents depend on the choice of metrics and connections.

\begin{prop} \label{prop:changeConnectionsMetrics}
    Let $M$, $\F$, $\Phi$, and \eqref{eq:resolution-NF} be as in
    Theorem ~\ref{train}.
  For each $j=1,2$, assume that $TM, E_1, \ldots, E_N$ are equipped with Hermitian metrics and connections $D^{TM}_{(j)}, D_1^{(j)}, \ldots, D_N^{(j)}$, such that $D^{TM}_{(j)}$ is of type $(1,0)$ and torsion free, and let $R^\Phi_{(j)}$ denote the corresponding Baum-Bott current \eqref{morning}.
  Then there exists a pseudomeromorphic current $N^\Phi$ of degree
  $2\ell -1$ with support on $\sing \F$ such that
    \begin{equation}\label{oberwolfach}
  d N^\Phi=  R^\Phi_{(1)}-R^\Phi_{(2)}.
\end{equation}
    Furthermore, if also $D^{(j)}_1,\ldots, D^{(j)}_N$ are of type $(1,0)$, then $N^\Phi$ is a sum of currents of bidegree
    $(\ell+j,\ell-1-j)$ for $0\leq j \leq \ell-1$.
  \end{prop}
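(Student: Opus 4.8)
The plan is to prove Proposition~\ref{prop:changeConnectionsMetrics} by a standard Chern-Simons / transgression argument, carried out at the level of the singular connections $\widehat D^\epsilon_k$ and then passed to the limit $\epsilon \to 0$. The key observation is that $R^\Phi_{(1)}$ and $R^\Phi_{(2)}$ are both limits of closed characteristic forms \eqref{parkman} built from two families of connections that agree outside $\Sigma_\epsilon$ (indeed, outside a neighborhood of $\sing\F$) with the respective smooth background connections $D^{TM}_{(j)}, D_1^{(j)}, \ldots, D_N^{(j)}$. First I would interpolate: for $t \in [0,1]$ set $\widehat D^{\epsilon,t}_k = (1-t)\widehat D^{\epsilon,(2)}_k + t\,\widehat D^{\epsilon,(1)}_k$, which is again a collection of connections of the same shape \eqref{eq:hatD}, with connection matrices depending affinely on $t$ and with the almost semi-meromorphic singularity structure \eqref{bluewear} preserved uniformly in $t$.

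Next I would introduce the standard transgression form. Writing $r^\Phi(\widehat D^{\epsilon,t}) = (i/2\pi)^\ell\Phi(\Theta(\widehat D^{\epsilon,t}_N)|\ldots|\Theta(\widehat D^{\epsilon,t}_0))$, the fundamental fact about characteristic forms associated to a collection compatible with a complex is that the $t$-derivative is exact: there is a transgression form $TP^\Phi_\epsilon$, polynomial in the $\widehat\Theta^{\epsilon,t}_k$ and in the difference one-forms $\widehat\theta^{\epsilon,(1)}_k - \widehat\theta^{\epsilon,(2)}_k$, such that
\begin{equation*}
  r^\Phi(\widehat D^{\epsilon,(1)}) - r^\Phi(\widehat D^{\epsilon,(2)}) = d\, N^\Phi_\epsilon, \qquad N^\Phi_\epsilon := \int_0^1 TP^\Phi_\epsilon\, dt.
\end{equation*}
This is the usual ``difference of characteristic forms is exact'' statement, and since each $\widehat D^{\epsilon,t}_k$ is a $(1,0)$-connection plus almost semi-meromorphic corrections, $N^\Phi_\epsilon$ is a smooth $(2\ell-1)$-form on $M$ supported in $\Sigma_\epsilon$ (the two families coincide off $\Sigma_\epsilon$, so the transgression vanishes there). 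The main work, which I would mirror from the proof of Theorem~\ref{train}, is to check that $N^\Phi_\epsilon$ has exactly the same algebraic shape \eqref{whitecollar}: its entries are polynomials in the matrices $\alpha_k, \chi_\epsilon\beta'_k, \chi_\epsilon^2\beta''_k, d\chi_\epsilon\wedge\beta'''_k$ together with the $t$-dependent difference terms, and upon integrating in $t$ one obtains a finite sum of terms of the form $A + \sum_j \chi_\epsilon^j B'_j + \sum_j \chi_\epsilon^{j-1}d\chi_\epsilon\wedge B''_j$ with $A$ smooth and $B'_j, B''_j$ almost semi-meromorphic and smooth off $\sing\F$.

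With that structure in hand, Remarks~\ref{elsa} and \ref{bronze} give that $N^\Phi := \lim_{\epsilon\to 0} N^\Phi_\epsilon$ exists as a pseudomeromorphic current of degree $2\ell-1$; since $N^\Phi_\epsilon$ is supported in $\Sigma_\epsilon$ and $\bigcap_{\epsilon>0}\Sigma_\epsilon = \sing\F$ (after taking $\{s=0\}=\sing\F$ as in Remark~\ref{christmas}), $N^\Phi$ has support on $\sing\F$. Passing \eqref{oberwolfach} to the limit, and using that $d$ is continuous on pseudomeromorphic currents, yields $dN^\Phi = R^\Phi_{(1)} - R^\Phi_{(2)}$. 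The bidegree claim follows exactly as in Theorem~\ref{train}: if all $D^{(j)}_k$ are $(1,0)$-connections then so are the $\widehat D^{\epsilon,t}_k$, their curvatures have components only in bidegrees $(1,1)$ and $(2,0)$, the difference one-forms $\widehat\theta^{\epsilon,(1)}_k - \widehat\theta^{\epsilon,(2)}_k$ are of type $(1,0)$, and the transgression $TP^\Phi_\epsilon$ (hence $N^\Phi_\epsilon$ and its limit) has only components of bidegree $(\ell+j, \ell-1-j)$, $0\le j\le\ell-1$. The step I expect to be the main obstacle is verifying carefully that the transgression $N^\Phi_\epsilon$ genuinely inherits the almost semi-meromorphic shape \eqref{whitecollar} uniformly in $t$ — in particular that no worse singularities than products of the $\sigma_k$ (controlled by Lemma~\ref{wedding}) appear after the $t$-integration — since this is what licenses the limit via the $ASM(M)$ algebra structure.
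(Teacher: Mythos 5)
Your overall strategy---affine interpolation $\widehat D^{\epsilon,t}_k$ between the two families, a transgression form (equivalently, fiber integration over $M\times[0,1]$, which is how the paper phrases it following Baum--Bott's Proposition~5.31), the almost semi-meromorphic structure \eqref{whitecollar} to pass to the limit $\epsilon\to 0$, and the bidegree bookkeeping---is essentially identical to the paper's proof. However, your justification of the key support property is based on a false premise. You claim that the two families coincide off $\Sigma_\epsilon$, agreeing there ``with the respective smooth background connections,'' so that the transgression vanishes there. This is backwards: by \eqref{eq:hatD}, $\widehat D^{(j),\epsilon}_k$ equals the smooth background connection $D^{(j)}_k$ where $\chi_\epsilon=0$, i.e.\ \emph{near} $\sing \F$, and equals $\widetilde D^{(j)}_k$ where $\chi_\epsilon=1$, i.e.\ outside $\Sigma_\epsilon$. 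Moreover $\widetilde D^{(1)}_k\neq\widetilde D^{(2)}_k$ in general, since they are built from different metrics (hence different minimal inverses $\sigma^{(j)}_k$) and different background connections. So the two interpolating families do \emph{not} agree outside $\Sigma_\epsilon$, and your argument that $N^\Phi_\epsilon$ is supported in $\Sigma_\epsilon$ collapses. Note also that it does not suffice that the difference $r^\Phi(\widehat D^{(1),\epsilon})-r^\Phi(\widehat D^{(2),\epsilon})$ vanishes off $\Sigma_\epsilon$ (it does, since each term does): what is needed is that the $dt$-component of the characteristic form of the interpolated family vanishes there, which is a strictly stronger statement.

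The correct argument, which is what the paper's citation of Baum--Bott's Proposition~5.31 supplies, runs as follows. Outside $\Sigma_\epsilon$ each collection $(\widehat D^{(j),\epsilon}_N,\ldots,\widehat D^{(j),\epsilon}_0, D^{(j)}_{basic})$ is compatible with the pointwise exact complex \eqref{eq:resolution-NF} and ends with a basic connection. Both the compatibility condition \eqref{eq:compatible} and the basic condition \eqref{eq:basicCondition} (as well as being of type $(1,0)$) are affine conditions on the connections, so the $t$-interpolated collection on the pullback of the complex to $(M\setminus\Sigma_\epsilon)\times[0,1]$ is again compatible, with a basic end term. Then Lemma~\ref{lemma:exact-curvature} together with the vanishing theorem (Theorem~\ref{thm:vanishing}), applied on the product, kills the full characteristic form there---including its $dt$-component---so the fiber integral $N^\Phi_\epsilon$ vanishes outside $\Sigma_\epsilon$. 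With this repaired, the rest of your argument (the structure \eqref{whitecollar} uniformly in $t$, the limits via Remarks~\ref{elsa} and~\ref{bronze}, continuity of $d$ on currents, and the bidegree count) goes through exactly as in the paper.
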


  \begin{proof}
    Let $\sigma_k^{(1)}$ and  $\sigma_k^{(2)}$ denote the minimal inverses of $\varphi_k$ with respect to the two different choices of Hermitian metrics.
    Next, for $j=1,2$ and $k=0,\ldots, N$, let $\widehat D^{(j),\epsilon}_{k}$ be the connection  \eqref{eq:hatD} constructed in Section ~\ref{sec:basic-connection} from the connections $D^{TM}_{(j)}, D_1^{(j)}, \ldots, D_N^{(j)}$ and the minimal inverses $\sigma_1^{(j)}, \ldots, \sigma_N^{(j)}$.

    Following the proof of Proposition ~5.31 in \cite{baum-bott}, let $\widetilde M = M\times [0,1]$
    and let $\pi:\widetilde M\to M$ be the natural projection. Next, for
$t \in [0,1]$, $k=0,\ldots, N$, define
$$\widehat{D}^\epsilon_{t, k} := t \widehat{D}^{(1),\epsilon}_{k}  + (1-t) \widehat{D}^{(2),\epsilon}_{k},$$ where $\widehat D^{(j),\epsilon}_{k}$ now denote the pullback connections on $\pi^*E_k$,
and let
\begin{equation*}
N^\Phi_\epsilon =\left (\frac{i}{2\pi}\right )^\ell  \pi_* \Phi\big(\Theta(\widehat D_{t,N}^\epsilon) | \ldots | \Theta(\widehat D_{t,0}^\epsilon)\big).
  \end{equation*}
  Then, by (the proof of) Proposition ~5.31 in \cite{baum-bott}, $N^\Phi_\epsilon$ is a form of degree $2\ell-1$ with support in $\Sigma_\epsilon$, such that
  \begin{equation}\label{hildesheim}
    dN^\Phi_\epsilon =
    \left (\frac{i}{2\pi}\right )^\ell  \Phi\big(\Theta(\widehat D_{N}^{(1),\epsilon}) | \ldots | \Theta(\widehat D_{0}^{(1),\epsilon})\big)
    -
    \left (\frac{i}{2\pi}\right )^\ell  \Phi\big(\Theta(\widehat D_{N}^{(2),\epsilon}) | \ldots | \Theta(\widehat D_{0}^{(2),\epsilon})\big).
  \end{equation}

  To prove existence of the limit of $N^\Phi_\epsilon$, we may as in the proof of
  Theorem ~\ref{train} work in local chart.
  Since the $\sigma_k^{(j)}$ are almost semi-meromorphic, see
  the proof of Proposition ~\ref{prop:basic-connection-asm}, as in the
  proof of Theorem ~\ref{train} we get that the curvature forms
  $\widehat \Theta^{(j),\epsilon}_{k}$ corresponding to the $\widehat
  D_k^{(j), \epsilon}$ are of the form \eqref{bluewear}. Moreover,
  since $\Phi(\Theta(\widehat D_{t,N}^\epsilon) | \ldots |
  \Theta(\widehat D_{t,0}^\epsilon))$ is a polynomial in the entries
  of $\widehat \Theta^{\epsilon}_{t,k}$ it follows that $\Phi(\Theta(\widehat D_{t,N}^\epsilon) | \ldots |
  \Theta(\widehat D_{t,0}^\epsilon))$ is a polynomial in $t$ and $dt$ with coefficients of the form
  \eqref{whitecollar}. Hence, integrating with respect to $t$, it follows that
  $N^\Phi_\epsilon$ is of the form \eqref{whitecollar},
  and as in the proof of Theorem~\ref{train}, it follows that the
  limit of $N^\Phi_\epsilon$ as $\epsilon\to 0$ exists as a
  pseudomeromorphic current $N^\Phi$ of degree $2\ell-1$. As before we
  may assume that $\{s=0\}=S$ and, since $N^\Phi_\epsilon$ has support
  in $\Sigma_\epsilon$, it follows that $N^\Phi$ has support on $S$.
  Taking limits in \eqref{hildesheim} we get \eqref{oberwolfach}.

  \smallskip

  Assume now that $D_1, \ldots, D_N$ are $(1,0)$-connections. Then the connection matrices of $\widehat D_{0}^{\epsilon}, \ldots, \widehat D_N^\epsilon$
  in frames induced by local holomorphic frames on $E_0,\ldots,E_n$ consist of (first degree) polynomials in $t$ with coefficients that are forms on $X$ of bidegree $(1,0)$, cf., Remark~\ref{axel}.
  Thus the  $\widehat \Theta^{(j),\epsilon}_{k}$ are polynomials in $t$ and $dt$ with coefficients that are forms on $X$ of bidegree $(1,0)$, $(1,1)$ and $(2,0)$,
  and the terms which contain $dt$ are exactly the ones where the coefficients are forms of bidegree $(1,0)$. It follows that
  \begin{equation*}
    \Phi\big(\Theta(\widehat D_{t,N}^\epsilon) | \ldots | \Theta(\widehat D_{t,0}^\epsilon)\big)=\Phi^\epsilon_0 + \Phi^\epsilon_1\wedge dt,
    \end{equation*}
    where $\Phi^\epsilon_0$ is a $2\ell$-form with no occurrences of $dt$ and $\Phi^\epsilon_1$ is a polynomial in $t$ with coefficients that are $(2\ell-1)$-forms on $X$
    with components of bidegree $(\ell + j, \ell-1-j)$, $0\leq j\leq \ell-1$.
    Hence
    $$
    N^\Phi_\epsilon = \left (\frac{i}{2\pi}\right )^\ell  \pi_* (\Phi^\epsilon_1\wedge dt)
    $$
    has components of bidegree $(\ell + j, \ell-1-j)$, $0\leq j\leq \ell-1$
    and consequently so has $N^\Phi$.
    \end{proof}

    From Proposition ~\ref{prop:changeConnectionsMetrics} we get that
    $R^\Phi_Z$ is canonical in the following sense when $\codim Z\geq \deg \Phi$.
    \begin{cor}\label{famine}
      Assume that we are in the setting of Theorem ~\ref{train} and
      that in addition $D_2,\ldots, D_N$ are of type $(1,0)$. Let $Z$
      be a connected component of $\sing \F$. Assume that $\codim
      Z\geq \ell$. Then $R^\Phi_Z$ is independent of the choice of
      Hermitian metrics and connections on $TM, E_1, \ldots, E_N$.
  \end{cor}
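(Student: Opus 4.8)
The plan is to prove that any two choices of Hermitian metrics and connections allowed by the hypotheses produce the same current $R^\Phi_Z$. So I would fix two such choices, labelled $(1)$ and $(2)$ (each with $D^{TM}_{(j)}$ of type $(1,0)$ and torsion free and $D^{(j)}_2,\dots,D^{(j)}_N$ of type $(1,0)$), and let $R^\Phi_{(1)},R^\Phi_{(2)}$ be the corresponding Baum-Bott currents of Theorem~\ref{train}. By Proposition~\ref{prop:changeConnectionsMetrics} there is a pseudomeromorphic current $N^\Phi$ of degree $2\ell-1$, supported on $\sing\F$, with $dN^\Phi=R^\Phi_{(1)}-R^\Phi_{(2)}$, and, once we are in the situation where the refined part of that proposition applies, $N^\Phi$ is a sum of currents of bidegree $(\ell+j,\ell-1-j)$, $0\le j\le\ell-1$; in particular its anti-holomorphic degree is at most $\ell-1$.

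Next I would localize at $Z$. Since $Z$ is a connected component of $\sing\F$, it is open and closed in $\sing\F$, so $N^\Phi=\1_Z N^\Phi+\1_{\sing\F\setminus Z}N^\Phi$ splits as a sum of pseudomeromorphic currents with disjoint supports. As $d$ does not enlarge supports, applying $\1_Z$ to the identity $dN^\Phi=R^\Phi_{(1)}-R^\Phi_{(2)}$ gives $R^\Phi_{(1),Z}-R^\Phi_{(2),Z}=\1_Z\,dN^\Phi=d(\1_Z N^\Phi)$. Now $\1_Z N^\Phi$ is a pseudomeromorphic current supported on $Z$ of anti-holomorphic degree at most $\ell-1$; since $\codim Z\ge\ell>\ell-1$, the dimension principle forces every bidegree component of $\1_Z N^\Phi$ to vanish. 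Hence $\1_Z N^\Phi=0$ and $R^\Phi_{(1),Z}=R^\Phi_{(2),Z}$, which is the assertion.

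The hard part is securing the bidegree bound on $N^\Phi$. The refined estimate of Proposition~\ref{prop:changeConnectionsMetrics} requires $D^{(j)}_1,\dots,D^{(j)}_N$ to be of type $(1,0)$, whereas here only $D^{(j)}_2,\dots,D^{(j)}_N$ are assumed so; the connection $D_1$ is unrestricted. If $D_1$ is not of type $(1,0)$, then $\widehat D^\epsilon_0$ and $\widehat D^\epsilon_1$ acquire a $(0,2)$-part in their curvature (their $(0,1)$-parts differ from $\dbar$ by terms built from $\mu_1:=(D_1)^{0,1}-\dbar$ and the morphisms $\varphi_1,\varphi_2$ and minimal inverses $\sigma_1,\sigma_2$; for instance the deviation for $\widehat D^\epsilon_0$ is $-\chi_\epsilon\varphi_1\mu_1\sigma_1$), and a priori the anti-holomorphic degree of $N^\Phi$ may exceed $\ell-1$. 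The missing step is therefore a reduction to the case where $D_1$ is also of type $(1,0)$, i.e.\ a proof that $R^\Phi_Z$ does not depend on the choice of $D_1$. I expect this to be the main obstacle. The natural route is to run the transgression argument along a homotopy moving only $D_1$ and to show that the extra $\mu_1$-terms contribute to $N^\Phi$ only through components whose restriction to $Z$ is killed by the dimension principle; here one would exploit that, off $\sing\F$, the mixed characteristic form equals $\Phi(\Theta(D_{basic}))$ by Lemma~\ref{lemma:exact-curvature}, with $D_{basic}$ of type $(1,0)$ and independent of $D_1$, so that the excess $(0,2)$-contributions remain concentrated near $\sing\F$.
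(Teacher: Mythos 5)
Your first two paragraphs are exactly the paper's proof: compare the two choices via Proposition~\ref{prop:changeConnectionsMetrics}, localize at $Z$, and kill the transgression current by the dimension principle. The paper localizes in a cosmetically different way --- it restricts to a neighborhood $U$ of $Z$ with $U\cap\sing\F=Z$, applies the dimension principle to $N^\Phi|_U$, and notes that $\1_Z\,dN^\Phi$ only depends on $dN^\Phi|_U$ --- but this is the same computation as your splitting $N^\Phi=\1_Z N^\Phi+\1_{\sing\F\setminus Z}N^\Phi$ and the observation that $d$ does not enlarge supports.

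Concerning what you call the main obstacle: your diagnosis is accurate --- the refined bidegree statement of Proposition~\ref{prop:changeConnectionsMetrics} requires $D_1^{(j)},\dots,D_N^{(j)}$ to be of type $(1,0)$, and if $D_1$ is unrestricted then $\widehat D_0^\epsilon$ and $\widehat D_1^\epsilon$ need not be $(1,0)$-connections (cf.\ Remark~\ref{axel}), so the bound $q\le\ell-1$ on the antiholomorphic degree of $N^\Phi$ is not available. But you should know that the paper does not fill this gap either: its proof of Corollary~\ref{famine} consists precisely of your first two steps, invoking the proposition's bidegree bound with no further argument about $D_1$. So the discrepancy you found lies in the paper itself; the hypothesis ``$D_2,\dots,D_N$ of type $(1,0)$'' is evidently intended to read ``$D_1,\dots,D_N$ of type $(1,0)$'' --- this is the form in which the independence claim appears in Theorem~\ref{thm:BB-current}, where all of $D_1,\dots,D_N$ are assumed to be $(1,0)$-connections, and it is in this form that Corollary~\ref{famine} is used in the proof of Corollary~\ref{nordstan}, where $D_1$ is a $(1,0)$-connection. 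The reduction you sketch at the end (a homotopy moving only $D_1$, with control of the $\mu_1$-contributions near $\sing\F$) is not in the paper; carrying it out would strengthen the corollary rather than reconstruct its proof, and note that one detail of your sketch needs care: $D_{basic}$ is not independent of $D_1$, only of its $(0,1)$-part, since $b=\Dc\varphi_1\,\sigma_1(dz\cdot\partial/\partial z)$ sees $D_1^{(1,0)}$ through the contraction with holomorphic vector fields. In sum, your proof is complete in exactly the cases the paper's is; either add the hypothesis that $D_1$ is of type $(1,0)$, or complete the reduction you outline.
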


  \begin{proof}
    Let $R^\Phi_{Z, (j)}$,  $j=1,2$,  denote the Baum-Bott currents corresponding to two different choices of metrics and connections.
    Then, by Proposition ~\ref{prop:changeConnectionsMetrics},
$$R^\Phi_{Z, (1)}- R^\Phi_{Z, (2)}=\1_Z d N^\Phi,$$
where $N^\Phi$ is a pseudomeromorphic current with components of
bidegree $(*, q)$ with $q\leq \ell-1$, cf.\ \eqref{photo}. Let $U$ be a neighborhood of $Z$
containing only the connected component $Z$ of $\sf$.
Since $N^\Phi|_U$ has support on $Z$ of codimension $\geq \ell$, it
follows from the dimension principle (Proposition \ref{prop:dimension-principle}) that
$N^\Phi|_U$ vanishes, and consequently so does
$dN^\Phi|_U$. Furthermore, since $\1_Z d N^\Phi$ only depends on
$dN^\Phi|_U$, it follows that $\1_Z dN^\Phi$ vanishes. Thus
$R^\Phi_{Z, (1)}= R^\Phi_{Z, (2)}$, which proves the result.
\end{proof}

\subsection{Dimension of singular components} \label{sec:dim-sing}
The classical Baum-Bott theory naturally yields restrictions on the
dimension of the singular components of a given foliation once we can
show that certain Baum-Bott classes are not vanishing.
More
precisely, if $Z$ is a singular component of a
given foliation $\F$ on a compact manifold $M$ such that
$\Res^\Phi(\F;Z)$ is non-zero for some homogeneous symmetric
polynomial $\Phi$ of degree $\ell$, then $\dim Z \geq n - \ell$.
This can be easily obtained from Baum-Bott's theory,  since $\Res^\Phi(\F;Z)$ lies in the image of the natural map $\iota_* : H_{2n-2\ell}(Z,\C) \to H_{2n-2\ell}(M,\C)$ (see Section ~\ref{lucia}). For example, this automatically gives  lower bounds on the dimension of singular components of foliations on projective spaces. This is because  a foliation  $\F$ of rank $\kappa$ and degree $d$ on $M=\mathbb{P}^n$ satisfies $c_1(N\F) = (d+n-\kappa+1) H$, where $H$ is the class of a hyperplane, cf., i.e., \cite[p.\ ~6]{CM}. In particular $c_1^{n-\kappa+1}(N\F) \neq 0 $, so $\Res^{c_1^{n-\kappa+1}}(\F;Z) \neq 0$ for at least one singular component $Z$, yielding $\dim Z \geq  \kappa -1$.

The above result can be slightly improved in our setting,  using the dimension principle for pseudomeromorphic currents.   An advantage of Corollary \ref{cor:dim-Z} below is that it can in principle be applied in local settings when, for instance, the homology of $M$ is trivial. This could help answering questions on the existence of certain singular foliations.  For example,  it is an open question whether there is a germ of codimension two foliation with an isolated singularity at the origin of $\C^4$,  see \cite[Problem 1]{CLN:JDG}.

\begin{cor} \label{cor:dim-Z}
   Assume that we are in the setting of Theorem ~\ref{train} and
      that in addition $D_2,\ldots, D_N$ are of type $(1,0)$. Let $Z$
      be a connected component of $\sing \F$. Assume that $R_Z^\Phi
      \neq 0$. Then $\dim Z \geq n - \ell$.
\end{cor}
\begin{proof}
  By Theorem 6.1.,
  $R_Z^\Phi = \sum_{j=0}^\ell R_{\ell-j}$,  where $R_{\ell-j}$ is a
  pseudomeromorphic current of bidegree $(\ell + j,  \ell -j)$
  supported by $Z$.  Notice that,  since $R_Z^\Phi \neq 0$ by
  assumption,  $R_p \neq 0$ for some $p, 0\leq p\leq \ell$.  By the dimension principle (Proposition \ref{prop:dimension-principle}), we must have $\codim Z \leq p \leq \ell$,  yielding the result.
\end{proof}

    \section{Baum-Bott currents of holomorphic vector fields} \label{sec:vector-field}

Let us consider the situation when
$\F$ is a rank one foliation on $M$.
Since $\F$ is a subsheaf of $TM$, it is torsion-free.
    Then, it follows by i.e., \cite[Lemmas 1.1.12, 1.1.15 and 1.1.16]{oss-vb} that
    $\F$ being saturated implies that $L:=T\F$ is a line bundle and $\F$ defines a global section $X \in H^0(M,TM\otimes
    L^*)$. Note that $\sing \F=\{X=0\}$.
    In particular, seeing $X$ as a morphism $L \to TM$, we obtain a locally free resolution
form
\begin{equation} \label{eq:resolution-vector-field}
    0 \to L \stackrel{X}{\to} TM \to N\F \to 0
\end{equation}
of $N \F$.

In this section we give an explicit description, Corollary
~\ref{nordstan}, of the Baum-Bott currents $R^\Phi_{\{p\}}$ when
$p$ is an isolated singularity.
We first consider the case when $\F$ is given by a global vector field
$X$, not necessarily with isolated singularities. Then the line bundle $L$ is trivial and the map $\Ok_M \to TM$ is given by multiplication by $X$.
We show that in this case the Baum-Bott currents $R^\Phi$
can be expressed in terms of the residue current of Bochner-Martinelli
type associated with $X$, cf.\ Section ~\ref{bmx}, see Theorem
\ref{thm:BBBM} below.
In particular, when $X$ has isolated singularities, we recover the
usual Baum-Bott formula in terms of the Grothendieck residue, cf.\  \eqref{eq:grothendieck-residue}.

\smallskip

Let $\Omega= (TM)^*$.
Then $X$ is a section of the dual bundle $\Omega^*$. Assume that $TM$ is equipped with a Hermitian metric and equip $\Omega$ with the dual metric.
Let $\sigma$ be the minimal inverse of $X$ and let
\begin{equation*}
    R^X =\sum_k R^X_k := \lim_{\epsilon \to 0} \dbar\chi_\epsilon \wedge \sum_{\ell\geq 0} \sigma(\dbar\sigma)^\ell,
  \end{equation*}
  where $\chi_\epsilon$ is as in \eqref{winter},
  be the residue current of Bochner-Martinelli type as defined in Section ~\ref{bmx}.

By the natural isomorphism
$$\Omega= (TM)^* \cong  T^* M $$
a section of $\Omega$ can be regarded as a $(1,0)$-form. Let $\widetilde \sigma$ denote the  form corresponding to $\sigma$.
Then note that
\begin{equation} \label{eq:tildeSigma}
    \widetilde{\sigma} = \sigma\left(dz\cdot\frac{\partial}{\partial z}\right),
\end{equation}
where $dz\cdot\frac{\partial}{\partial z}$ is the canonical $TM$-valued $(1,0)$-form \eqref{heart}.
Similarly sections of $\Lambda(\Omega\oplus T^*_{0,1}(M))$ are naturally
identified with forms,
cf.\ Section ~\ref{bmx}.
Let $\widetilde R^X_k$ denote the pseudomeromorphic $(k,k)$-current corresponding to $R^X_k$.
Then
\begin{equation} \label{eq:tildeRX}
    \widetilde{R}^X_k = \lim_{\epsilon \to 0} \dbar\chi_\epsilon \wedge \widetilde{\sigma}\wedge (\dbar\widetilde{\sigma})^{k-1}.
  \end{equation}

 \begin{ex}\label{grothen}
   Assume that $\{X=0\}$ consists of the point $p\in M$.
Let $(z_1,\ldots, z_n)$ be a local coordinate system centered at
$p$. Then $X$ is of the form
$X=\sum_{i=1}^n a_i(z) \frac{\partial}{\partial z_i}$
near $p$.
It follows from Example ~\ref{complete} that
  \begin{equation}\label{halmstad}
    \widetilde R^X=\widetilde R^X_n =
    \dbar \frac {1}{a_n}\wedge \cdots \wedge \dbar \frac{1}{a_1} \wedge dz_1\wedge \cdots \wedge dz_n.
  \end{equation}
   \end{ex}

\smallskip

To describe the Baum-Bott currents in this case we need to introduce
some notation.
As above, see Section \ref{texas}, we identify a homogeneous
symmetric polynomial $\Phi \in \C[z_1,\ldots,z_n]$ of degree $\ell$
with the corresponding invariant symmetric polynomial on (form-valued) $(n\times n$)-matrices.
Recall that the polarization of $\Phi$ is the invariant symmetric function $\widetilde \Phi(A_1,\dots,A_\ell)$
 that satisfies $\Phi(A) = \widetilde \Phi(A,\dots,A)$.
For $0 \leq k \leq \ell$, and two $(n\times n)$-matrices $A$ and $B$ (that are possibly form-valued of even degree), we let
\begin{equation}\label{liseberg}
\Phi_k(A,B) = \binom{\ell}{k} \widetilde \Phi(\underbrace{A,\dots,A}_{\text{$k$ times}},\underbrace{B,\dots,B}_{\text{$\ell-k$ times}}),
  \end{equation}
so that
\begin{equation} \label{eq:PhiAdd}
    \Phi(A+B) = \sum_{0 \leq k \leq \ell} \Phi_k(A,B).
  \end{equation}

In the statement below, $\Dc$ is as in \eqref{eq:Dvarphi1}.
\begin{thm} \label{thm:BBBM}
    Let $M$ be a complex manifold of dimension $n$ and let $X$ be a holomorphic vector field on $M$. Let $\F$ be the corresponding rank one foliation and
    consider the resolution
\begin{equation} \label{eq:resolution-vector-field-2}
    0 \to \Ok_M \stackrel{X} {\to} TM \to N\F \to 0.
  \end{equation}
    Assume that $TM$ and $\Ok_M$ are equipped with Hermitian metrics and  $(1,0)$-connections $D^{TM}$ and $D_1$, respectively, and assume that $D^{TM}$ is torsion free.
Let $\Phi\in \C[z_1,\ldots, z_n]$ be a homogeneous symmetric polynomial of degree $n$, and
  let $R^\Phi$ denote the associated Baum-Bott current \eqref{morning}. Then there exists a pseudomeromorphic $(n,n-1)$-current $N^\Phi$ with support on $\{X=0\}$ such that
    \begin{equation} \label{eq:BB-current-vector-field}
        R^\Phi= \left (\frac{i}{2\pi}\right )^n  \sum_{k = \codim
          \{X=0\}}^n \widetilde{R}_k^X \wedge \Phi_k\big (\Dc
        \varphi_1,\Theta(D^{TM})\big ) + \dbar N^\Phi.
    \end{equation}
\end{thm}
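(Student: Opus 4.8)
The plan is to compute the limit \eqref{morning} directly, working in a local frame as in the proof of Theorem~\ref{train}. Here the resolution \eqref{eq:resolution-vector-field-2} has only two nonzero bundles, $E_1=\Ok_M$ and $E_0=TM$, so $\widehat D_1^\epsilon=D_1$ is a fixed $(1,0)$-connection with smooth curvature $\Theta_1$, while $\widehat D_0^\epsilon=D^{TM}+\chi_\epsilon A$ with $A=b+a_0$ by \eqref{eq:tildeD0b0a0} and \eqref{eq:hatD}; here $b=\Dc\varphi_1\,\sigma_1(dz\cdot\partial/\partial z)=\Dc\varphi_1(\widetilde\sigma)$ and $a_0$ are $\End(TM)$-valued almost semi-meromorphic $(1,0)$-currents, smooth off $\sf=\{X=0\}$, cf.\ \eqref{boat}, \eqref{badminton}, \eqref{eq:tildeSigma} and Lemma~\ref{wedding}. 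Expanding the curvature as in \eqref{bluewear}, $\widehat\Theta_0^\epsilon=\Theta^{TM}+\chi_\epsilon D_{\End}A+\chi_\epsilon^2 A\wedge A+d\chi_\epsilon\wedge A$, and splitting $d\chi_\epsilon=\partial\chi_\epsilon+\dbar\chi_\epsilon$, I would decompose the characteristic form $\Phi(\Theta(\widehat D_1^\epsilon)|\Theta(\widehat D_0^\epsilon))$ into three pieces: a \emph{principal value part} free of $d\chi_\epsilon$, a $\partial\chi_\epsilon$-part, and a $\dbar\chi_\epsilon$-part.

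The first two parts I expect to vanish in the limit. The principal value part is, by \eqref{whitecollar}, a sum of almost semi-meromorphic currents; off $\sf$ it coincides (as $\chi_\epsilon\to 1$) with $\Phi(\Theta(D_{basic}))$, which vanishes identically by the Baum-Bott vanishing theorem, Theorem~\ref{thm:vanishing}, since $\deg\Phi=n>n-1=n-\kappa$. Hence this part is an almost semi-meromorphic current vanishing off $\sf$, so it is zero by the standard extension property (Section~\ref{asmsec}). For the $\partial\chi_\epsilon$-part, the identity $\lim_{\epsilon\to 0}\partial\chi_\epsilon\wedge\alpha=\lim_{\epsilon\to 0}d\chi_\epsilon\wedge\alpha-\lim_{\epsilon\to 0}\dbar\chi_\epsilon\wedge\alpha=r(\alpha)-r(\alpha)=0$, valid for $\alpha\in ASM(M)$ with $ZSS(\alpha)\subset\{s=0\}$ (Section~\ref{asmsec}), shows that every $\partial\chi_\epsilon$-residue vanishes. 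Consequently $R^\Phi=\lim_{\epsilon\to 0}(i/2\pi)^n[\dbar\chi_\epsilon\text{-part}]$.

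The heart of the proof is to identify this remaining limit. Since $d\chi_\epsilon$ enters $\widehat\Theta_0^\epsilon$ only through the factor $d\chi_\epsilon\wedge A$, the polarization identity \eqref{eq:PhiAdd} expresses the $\dbar\chi_\epsilon$-part as $\dbar\chi_\epsilon\wedge A$ inserted into a single argument of the polarized $\widetilde\Phi$, the remaining $n-1$ arguments being filled by $\Theta_1$ and $\Theta^{TM}+\chi_\epsilon(D_{\End}A+\chi_\epsilon A\wedge A)$. Using $b=\Dc\varphi_1(\widetilde\sigma)$, so that $A$ contributes one factor $\widetilde\sigma$ together with one factor $\Dc\varphi_1$, and the fact that the $\dbar$-part of $D_{\End}A$ equals $\Dc\varphi_1(\dbar\widetilde\sigma)$ modulo curvature terms, I would show that a term carrying $k$ factors of $\Dc\varphi_1$ assembles exactly the residue factor $\widetilde\sigma\wedge(\dbar\widetilde\sigma)^{k-1}$ (whence $\widetilde R_k^X$ after wedging with $\dbar\chi_\epsilon$ and passing to the limit, cf.\ \eqref{eq:tildeRX} and Remark~\ref{bronze}) together with $n-k$ factors of $\Theta^{TM}$, the binomial coefficient $\binom{n}{k}$ of \eqref{liseberg} arising from the choice of slots. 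Collecting the terms gives $(i/2\pi)^n\sum_k\widetilde R^X_k\wedge\Phi_k(\Dc\varphi_1,\Theta^{TM})$; every remaining contribution---those coming from $a_0$, from $\Theta_1$, from the $(2,0)$-parts and from the $A\wedge A$ pieces, none of which matches this clean pattern---is $\dbar$-exact and supported on $\{X=0\}$, and is to be collected into $\dbar N^\Phi$.

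Finally, the sum runs from $k=\codim\{X=0\}$ because $\widetilde R_k^X$ has bidegree $(k,k)$ and support on $\{X=0\}$, so it vanishes for $k<\codim\{X=0\}$ by the dimension principle (Section~\ref{hemma}); the same bidegree bookkeeping shows that the discrepancy $R^\Phi-(i/2\pi)^n\sum_k\widetilde R^X_k\wedge\Phi_k(\Dc\varphi_1,\Theta^{TM})$ lives in bidegree $(n,n)$, so that $N^\Phi$ can be taken pseudomeromorphic of bidegree $(n,n-1)$ with support on $\{X=0\}$. The main obstacle is the third paragraph: tracking the superstructure signs (cf.\ \eqref{estragon}--\eqref{parsley}) and the combinatorics of the polarization precisely enough to isolate the clean pattern $\widetilde R^X_k\wedge\Phi_k(\Dc\varphi_1,\Theta^{TM})$, and to verify that the entire remainder is $\dbar$ of a current supported on $\{X=0\}$ rather than merely $d$-closed.
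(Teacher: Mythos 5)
Your opening two reductions are correct: the $d\chi_\epsilon$-free part of the characteristic form has an almost semi-meromorphic limit which vanishes off $\sing\F$ (by Proposition~\ref{prop:basic-connection-asm}, Lemma~\ref{lemma:exact-curvature} and Theorem~\ref{thm:vanishing}) and hence vanishes identically by the SEP, and the $\partial\chi_\epsilon$-terms die because $\lim_{\epsilon\to 0}d\chi_\epsilon\wedge a=\lim_{\epsilon\to 0}\dbar\chi_\epsilon\wedge a=r(a)$ for $a\in ASM(M)$ with $ZSS(a)\subset\{s=0\}$, cf.\ Section~\ref{asmsec}. This is a legitimate (and in fact rather clean) alternative to the way the paper kills its principal value part, which is via the $X$-connection vanishing result of Baum--Bott (Lemma~\ref{lemma:X-vanishing}) applied to $D_0$.

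The genuine gap is in your third paragraph, and it is not a matter of sign bookkeeping: the assertion that the leftover contributions to the $\dbar\chi_\epsilon$-part --- those involving $a_0$, $\Theta(D_1)$, and the mixed $A\wedge A$ pieces --- are ``$\dbar$-exact and supported on $\{X=0\}$'' is essentially the full content of the theorem, and nothing in your plan produces the potential $N^\Phi$. The limits of those terms are honest residue currents (currents of the type $\widetilde R^X_k$ wedged with smooth forms built from $a_0$ and $\Theta(D_1)$), and neither they nor their sum is visibly $\dbar$-exact; exactness only emerges after they are regrouped through a transgression identity, which your decomposition never sets up. The paper arranges things so that these terms never enter the final computation at all. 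First, Proposition~\ref{prop:changeConnectionsMetrics} (a transgression over $M\times[0,1]$) replaces the metric and connection on $\Ok_M$ by the trivial ones at the cost of a term $\dbar N^\Phi$; this forces $\Theta(\widehat D_1^\epsilon)=0$, so the mixed characteristic form collapses to $\Phi(\Theta(\widehat D_0^\epsilon))$ and all $\Theta(D_1)$-terms disappear. Second, a further transgression (Lemma~\ref{step1}) compares $\widehat D_0^\epsilon$ with $D_0^\epsilon=\chi_\epsilon D_0+(1-\chi_\epsilon)D^{TM}$, i.e., removes $a_0$; here the crucial point that the transgression current is supported on $\{X=0\}$ is not automatic, but rests on the endpoint connections being $X$-connections outside $\Sigma_\epsilon$ (Lemma~\ref{roadtrip}) together with Baum--Bott's vanishing machinery for $X$-connections (Lemmas~8.11 and~8.18 of \cite{baum-bott}). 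Only after these two reductions does the direct computation (Lemma~\ref{step2}) --- which also uses, as you do not make explicit, that a $(1,0)$-connection and $\deg\Phi=n$ allow one to work with $(1,1)$-components of the curvature only, and that $\widetilde\sigma\wedge\widetilde\sigma=0$ since $\widetilde\sigma$ is a scalar-valued form --- yield exactly $\sum_k\widetilde R_k^X\wedge\Phi_k(\Dc\varphi_1,\Theta(D^{TM}))$ with no remainder whatsoever. So to complete your argument you would need to import precisely this transgression mechanism and the $X$-connection vanishing that localizes it; without them the plan stalls exactly where you yourself flag the obstacle.
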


\begin{proof}
  Let us use the notation from the previous sections (with the convention $E_1=\Ok_M$). In particular, let $\varphi_1: \Ok_M \to TM$
  be the map given by multiplication by $X$ and let $\sigma_1$ be its minimal inverse.
  By Proposition~\ref{prop:changeConnectionsMetrics}, we may assume
  that $\Ok_M$ is equipped with the trivial metric and $D_1 = d$ is
  the trivial connection, because these choices will only affect the
  term $\dbar N^\Phi$ in \eqref{eq:BB-current-vector-field}. Indeed,
  since $D_1$ and $d$ are $(1,0)$-connections the difference between
  the corresponding Baum-Bott currents is of the form $dN^\Phi$, where $N^\Phi$ is a pseudomeromorphic current of bidegree $(n,n-1)$.

Let $\widehat{D}_0^\epsilon$ and $\widehat{D}_1^\epsilon$  be the connections defined in \eqref{eq:hatD}. Then, by definition,
\begin{equation*}
  R^\Phi= \lim_{\epsilon\to 0}\left (\frac{i}{2\pi}\right )^n 
 \Phi\big (\Theta(\widehat{D}_1^\epsilon) | \Theta(\widehat{D}_0^\epsilon)\big ).
\end{equation*}
  Observe that, in the notation of the previous sections, we have that  $\varphi_2 = 0$. Therefore, $\widehat{D}_1^\epsilon = D_1 = d$, see \eqref{eq:tildeD} and \eqref{eq:hatD}; in particular, $\Theta(\widehat{D}_1^\epsilon) = 0$. Hence
\begin{equation*}
  R^\Phi= \lim_{\epsilon\to 0} \left (\frac{i}{2\pi}\right )^n
  \Phi\big (\Theta(\widehat{D}_0^\epsilon)\big ),
\end{equation*}
cf.\ Section ~\ref{texas}.
    Now \eqref{eq:BB-current-vector-field} follows by combining Lemmas ~\ref{step1} and ~\ref{step2} below.
    \end{proof}

Let
\begin{equation} \label{eq:Dvarphi1-epsilon}
  D_0^\epsilon = \chi_\epsilon D_0 + (1 - \chi_\epsilon)D^{TM},
\end{equation}
where $D_0$ is as in \eqref{eq:Dvarphi1}.
\begin{remark}\label{newyear}
Since $D^{TM}$ is a $(1,0)$-connection on $M$ and $D_0$ is a $(1,0)$-connection on $M\setminus \sf$, $D_0^\epsilon$ is a $(1,0)$-connection on $TM$.
Note that $D_0^\epsilon=D_0$ on $M\setminus \Sigma_\epsilon$.
\end{remark}

    \begin{lma}\label{step1}
      Assume that $D_0^\epsilon$ and $\widehat{D}_0^\epsilon$ are the
      connections on $TM$ defined by \eqref{eq:Dvarphi1-epsilon} and
      \eqref{eq:hatD}, respectively.
      Then there exists a pseudomeromorphic $(n,n-1)$-current $N^\Phi$ with support on $\{X=0\}$ such that
  \begin{equation}\label{betapet}
      \lim_{\epsilon\to 0} \Phi\big
      (\Theta(\widehat{D}_0^\epsilon)\big )= \lim_{\epsilon\to 0}
      \Phi\big (\Theta({D}_0^\epsilon)\big ) + \dbar N^{\Phi}.
    \end{equation}
  \end{lma}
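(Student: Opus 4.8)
The plan is to compare the two connections by a Chern--Simons transgression, in the spirit of the proof of Proposition~\ref{prop:changeConnectionsMetrics}. By \eqref{eq:hatD} and \eqref{eq:Dvarphi1-epsilon} the connections differ only by $\widehat D_0^\epsilon-D_0^\epsilon=\chi_\epsilon a_0$, where $a_0=-D\varphi_1\sigma_1$ is a $(1,0)$-form that is smooth on $M\setminus\{X=0\}$ and extends to an almost semi-meromorphic current on $M$ by Lemma~\ref{wedding}. Working on $\widetilde M=M\times[0,1]$ with the interpolating family $D^\epsilon_t=t\,\widehat D_0^\epsilon+(1-t)D_0^\epsilon$ and pushing forward under $\pi\colon\widetilde M\to M$ produces a transgression current $N^\Phi_\epsilon$ satisfying $d N^\Phi_\epsilon=\Phi(\Theta(\widehat D_0^\epsilon))-\Phi(\Theta(D_0^\epsilon))$, exactly as in \eqref{hildesheim}.

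The first key observation is a bidegree collapse. Since $D^{TM}$, $D_0$ and $\widetilde D_0$ are $(1,0)$-connections and $a_0$ is a $(1,0)$-form, the $dt$-component of $\Phi(\Theta(D^\epsilon_t))$ carries exactly one $(1,0)$-factor together with $n-1$ curvature factors; as $\dim_\C M=n$ and $\deg\Phi=n$, the only bidegree surviving on $M$ is $(n,n-1)$. Hence $N^\Phi_\epsilon$ is of bidegree $(n,n-1)$ and $d N^\Phi_\epsilon=\dbar N^\Phi_\epsilon$, the $\partial$-part landing in the vanishing bidegree $(n+1,n-1)$. Because $a_0$, and hence $\sigma_1$, is almost semi-meromorphic, the entries of $\Phi(\Theta(D^\epsilon_t))$ are again of the form \eqref{bluewear}--\eqref{whitecollar}; repeating the convergence argument of Theorem~\ref{train} shows that $N^\Phi_\epsilon$ converges as $\epsilon\to0$ to a pseudomeromorphic $(n,n-1)$-current, and passing to the limit in the transgression identity produces the current $N^\Phi$ and the relation \eqref{betapet}.

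The remaining point, that $N^\Phi$ may be taken with support on $\{X=0\}$, is the main obstacle. The first step toward it is that $\lim_\epsilon\Phi(\Theta(\widehat D_0^\epsilon))=R^\Phi$ is supported on $\{X=0\}$: off $\{X=0\}$ one has $\widehat D_0^\epsilon=\widetilde D_0$ for small $\epsilon$, and the Baum--Bott vanishing applied to the compatible collection $(\widetilde D_N,\dots,\widetilde D_0,D_{basic})$ (Lemma~\ref{lemma:exact-curvature} and Theorem~\ref{thm:vanishing}) gives $\Phi(\Theta(\widetilde D_0))=0$ there, since $\deg\Phi=n>n-\kappa$. The delicate part is the parallel analysis of the term $\lim_\epsilon\Phi(\Theta(D_0^\epsilon))$: here one must exploit the sharp form of Lemma~\ref{lemma:Dvarphi1} available in the rank-one case with $D_1=d$ (where the correction term in $\im\varphi_1$ drops out, so that $i(X)D_0=\mathcal L_X$ holds exactly along $T\F$) to control the behaviour of $\Phi(\Theta(D_0^\epsilon))$ off $\{X=0\}$. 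Granting this, the natural candidate for the primitive is $\mathbf 1_{\{X=0\}}$ applied to the transgression limit, and one checks via the residue calculus of Section~\ref{asmsec} --- splitting the limit into its almost semi-meromorphic and $d\chi_\epsilon$-residue parts and using the standard extension property together with the bidegree collapse $d=\dbar$ --- that the complementary part contributes nothing to $\dbar N^\Phi$. Carrying out this extraction of an $\{X=0\}$-supported primitive, and verifying its compatibility with \eqref{betapet}, is where the real work lies, and I expect it to be the technical heart of the argument.
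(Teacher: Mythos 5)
Your proposal matches the paper's proof in its first two thirds: the Chern--Simons transgression on $\widetilde M=M\times[0,1]$ with $D^\epsilon_t=t\widehat D_0^\epsilon+(1-t)D_0^\epsilon$, the identity $dN^\Phi_\epsilon=\Phi(\Theta(\widehat D_0^\epsilon))-\Phi(\Theta(D_0^\epsilon))$, the convergence of $N^\Phi_\epsilon$ via the almost semi-meromorphic structure \eqref{bluewear}--\eqref{whitecollar}, and the bidegree collapse to $(n,n-1)$ forcing $d=\dbar$ are all exactly what the paper does. The problem is the support statement, which you yourself flag as ``where the real work lies'' and do not carry out --- and this is a genuine gap, not a routine verification. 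Your plan is to take the limit first, obtain $N^\Phi$ on all of $M$, and then pass to $\1_{\{X=0\}}N^\Phi$, checking that the complementary piece contributes nothing to $\dbar N^\Phi$. But knowing that $\dbar N^\Phi$ has support on $\{X=0\}$ (which does follow from \eqref{betapet}, since both limits vanish off $\{X=0\}$ by Lemmas~\ref{roadtrip}, \ref{lemma:X-vanishing} and Theorem~\ref{thm:vanishing}) does \emph{not} allow you to replace $N^\Phi$ by $\1_{\{X=0\}}N^\Phi$: for that you would need $\dbar\bigl(\1_{M\setminus\{X=0\}}N^\Phi\bigr)=0$, and a pseudomeromorphic current whose $\dbar$ is supported on a variety need not itself be modifiable in this way. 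Nothing in your outline supplies this.

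The idea you are missing is that the support control should be obtained \emph{before} the limit, at the level of the smooth forms $N^\Phi_\epsilon$. Outside $\Sigma_\epsilon$ one has $\widehat D_0^\epsilon=\widetilde D_0$ and $D_0^\epsilon=D_0$, and these are $X$-connections there (Lemma~\ref{roadtrip}); since $\deg\Phi=n$, the transgression form between $X$-connections vanishes --- this is Lemma~8.18 of \cite{baum-bott}, the transgression analogue of Lemma~\ref{lemma:X-vanishing} --- so $N^\Phi_\epsilon$ itself has support in $\Sigma_\epsilon$. Choosing $s=X$ (allowed, since the limit is independent of $s$), one gets $\supp N^\Phi\subset\bigcap_{\epsilon>0}\Sigma_\epsilon=\{X=0\}$ with no further argument. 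This replaces the hard ``extraction'' step you postpone by a pointwise vanishing statement for forms, which is why the paper's proof closes and yours does not.
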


  \begin{lma}\label{step2}
     Assume that $D_0^\epsilon$ is the connection on $TM$ defined by \eqref{eq:Dvarphi1-epsilon}. Then
 \begin{equation}\label{mantel}
    \Phi\big (\Theta(D_0^\epsilon)\big ) =
    \sum_{k = \codim Z}^n \widetilde{R}_k^X \wedge \Phi_k\big (\Dc \varphi_1,\Theta(D^{TM})\big ).
  \end{equation}
\end{lma}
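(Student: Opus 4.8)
The plan is to compute the limit of $\Phi(\Theta(D_0^\epsilon))$ directly from the explicit shape of $D_0^\epsilon=D^{TM}+\chi_\epsilon b$, where $b=\Dc\varphi_1\,\widetilde\sigma$ with $\widetilde\sigma=\sigma_1(dz\cdot\partial/\partial z)$ the scalar $(1,0)$-form of \eqref{eq:tildeSigma} and $\sigma_1$ the minimal inverse of $\varphi_1=X$. Since $\Dc\varphi_1$ has form degree zero (it is an honest endomorphism, $\deg\Dc\varphi_1=\deg\varphi_1=-1$) while $\widetilde\sigma$ is a scalar $(1,0)$-form, one gets $b\wedge b=0$, so the curvature collapses to $\Theta(D_0^\epsilon)=\Theta(D^{TM})+D_\End(\chi_\epsilon b)$. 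First I would reduce the bidegree: as $\deg\Phi=n$ and $\dim_\C M=n$, the form $\Phi(\Theta(D_0^\epsilon))$ is top degree, and in the multilinear expansion of $\Phi$ any factor of bidegree $(2,0)$ raises the holomorphic degree above $n$ and therefore drops out. Only the $(1,1)$-parts survive, and since $D^{TM},D_1$ are $(1,0)$-connections, $(D_\End(\chi_\epsilon b))^{1,1}=\dbar(\chi_\epsilon b)$; thus effectively $\Phi(\Theta(D_0^\epsilon))=\Phi\big(\Theta(D^{TM})^{1,1}+\dbar(\chi_\epsilon b)\big)$.

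Next I would expand by the polarization identity \eqref{eq:PhiAdd}, writing $\dbar(\chi_\epsilon b)=\dbar\chi_\epsilon\wedge b+\chi_\epsilon\dbar b$ and $\dbar b=\dbar_\End(\Dc\varphi_1)\wedge\widetilde\sigma\pm\Dc\varphi_1\,\dbar\widetilde\sigma$, and then sorting each resulting monomial by whether it carries the factor $\dbar\chi_\epsilon$. The decisive bookkeeping is that $\widetilde\sigma\wedge\widetilde\sigma=0$, so at most one factor in any monomial can carry a bare $\widetilde\sigma$; in the $\dbar\chi_\epsilon$-terms this forces the distinguished factor to be $\dbar\chi_\epsilon\wedge\widetilde\sigma\,\Dc\varphi_1$ and the remaining $k-1$ factors to be $\chi_\epsilon\,\Dc\varphi_1\,\dbar\widetilde\sigma$. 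Collecting these and using $k\,\chi_\epsilon^{k-1}\dbar\chi_\epsilon=\dbar(\chi_\epsilon^k)$ together with $\chi_\epsilon^k\sim\chi_{[1,\infty)}$ (Remark \ref{elsa}), one arrives at $\sum_k\dbar(\chi_\epsilon^k)\wedge\widetilde\sigma\wedge(\dbar\widetilde\sigma)^{k-1}\wedge\Phi_k(\Dc\varphi_1,\Theta(D^{TM}))$, which by the definition \eqref{eq:tildeRX} of the Bochner--Martinelli currents converges to $\sum_k\widetilde R^X_k\wedge\Phi_k(\Dc\varphi_1,\Theta(D^{TM}))$; the sum starts at $k=\codim Z$ because $\widetilde R^X_k=0$ below that codimension. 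This is exactly the right-hand side of \eqref{mantel}.

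It then remains to show that the terms \emph{without} a factor $\dbar\chi_\epsilon$ do not contribute in the limit. These assemble into $\Phi\big(\Theta(D^{TM})^{1,1}+\chi_\epsilon\dbar b\big)$, and since $\chi_\epsilon a\to a$ for almost semi-meromorphic $a$ (the SEP, Section \ref{asmsec}), they converge to $\Phi(\Theta(D_0))=\Phi\big(\Theta(D^{TM})^{1,1}+\dbar b\big)$, the Chern--Weil form of the singular connection $D_0$ on $TM|_{M\setminus\sf}$. The main obstacle is proving that this current vanishes on $M\setminus\sf$, and this is precisely where Baum--Bott vanishing enters: by Lemma \ref{lemma:Dbasic} the connection $D_0$ induces the basic connection $D_{basic}$ on $N\F|_{M\setminus\sf}$, and since $\deg\Phi=n>n-\kappa=\rank N\F$, Theorem \ref{thm:vanishing} forces the associated top-degree invariant to vanish. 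Concretely, I would exhibit the vanishing by expanding $\Phi(\Theta(D_0)^{1,1})$ in powers of $\dbar\widetilde\sigma$: the extremal term is $\Phi(\Dc\varphi_1)\,(\dbar\widetilde\sigma)^{\wedge n}$, which is zero by nilpotency of the Bochner--Martinelli form, $(\dbar\widetilde\sigma)^{\wedge n}=0$ on $M\setminus\sf$ (a consequence of $\varphi_1\sigma_1$ being a projection, equivalently $\sum_j a_j\,\dbar\widetilde\sigma_j=0$), while the lower terms are governed by the Bianchi-type identity $\dbar_\End(\Dc\varphi_1)=-\,\Theta(D^{TM})^{1,1}(X)$ (from $D_\End^2\varphi_1=\Theta_\End\varphi_1$ and $\dbar_\End\varphi_1=0$) and cancel against the $\Theta(D^{TM})^{1,1}$-contributions. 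This cancellation — the form-level incarnation of Baum--Bott vanishing for the basic-type connection $D_0$ — is the delicate point of the proof; once it is established, combining it with the preceding paragraph yields \eqref{mantel}.
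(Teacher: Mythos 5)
Most of your proposal coincides with the paper's proof: the reduction to $(1,1)$-components of the curvature (using that $\Phi$ has degree $n$ and $D_0^\epsilon$ is a $(1,0)$-connection), the polarization expansion \eqref{eq:PhiAdd} sorted according to occurrences of $\dbar\chi_\epsilon$, the use of $\widetilde\sigma\wedge\widetilde\sigma=0$ and $\dbar\chi_\epsilon\wedge\dbar\chi_\epsilon=0$, the identification of the $\dbar\chi_\epsilon$-terms with $\widetilde R_k^X\wedge\Phi_k\big(\Dc\varphi_1,\Theta(D^{TM})\big)$ via Remarks~\ref{elsa} and~\ref{bronze} and \eqref{eq:tildeRX}, and the dimension principle to start the sum at $k=\codim\{X=0\}$ are all exactly as in the paper. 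The gap lies in the one step that remains: the vanishing of the terms carrying no $\dbar\chi_\epsilon$, i.e.\ the identity $\Phi\big(\Theta(D_0)\big)=0$ on $M\setminus\{X=0\}$.

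Your appeal to Theorem~\ref{thm:vanishing} does not establish this. That theorem concerns the curvature of the basic connection $D_{basic}$ on $N\F|_{M\setminus\sing\F}$, a bundle of rank $n-1$, whereas what is needed is the vanishing of $\Phi\big(\Theta(D_0)\big)$ for the connection $D_0$ on $TM$, a bundle of rank $n$. The only bridge between the two is Lemma~\ref{lemma:exact-curvature} applied to the compatible collection $(\widetilde D_1,\widetilde D_0,D_{basic})$ of Proposition~\ref{prop:basic-connection-asm}; but that collection involves $\widetilde D_0=D_0+a_0$ with $a_0=-D\varphi_1\sigma_1$, cf.\ \eqref{eq:tildeD} and \eqref{badminton}, not $D_0$ itself --- indeed $a_0$ is introduced precisely because $(D_1,D_0,D_{basic})$ fails to be compatible --- so this route gives $\Phi\big(\Theta(\widetilde D_0)\big)=0$, which is not the statement you need. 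Your fallback, the direct expansion using $(\dbar\widetilde\sigma)^{\wedge n}=0$ and a Bianchi-type cancellation, is exactly the nontrivial content, and you leave it as an admitted sketch. The paper avoids this computation entirely: Lemma~\ref{roadtrip} shows, by a short computation using $\sigma_1X=1$ and the torsion-freeness of $D^{TM}$, that $D_0$ is an \emph{$X$-connection} on $M\setminus\{X=0\}$, and then Lemma~\ref{lemma:X-vanishing} (Baum--Bott's Lemma~8.11, which is the analogue of Theorem~\ref{thm:vanishing} for $X$-connections on $TM$ rather than basic connections on $N\F$) gives $\Phi\big(\Theta(D_0)\big)=0$ there; since $\chi_\epsilon$ vanishes identically in a neighborhood of $\{X=0\}$, the offending term $\chi_\epsilon\,\Phi\big(\Theta(D_0)\big)$ is then zero for every $\epsilon$, not only in the limit. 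To repair your argument, replace the appeal to Theorem~\ref{thm:vanishing} by Lemmas~\ref{roadtrip} and~\ref{lemma:X-vanishing}, or else genuinely carry out the cancellation you sketch, which amounts to reproving Lemma~\ref{lemma:X-vanishing} for $D_0$ by hand.
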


To prove the lemmas we recall from \cite[\S 8]{baum-bott} that, if $X$ is a non-vanishing vector field on some open set $U \subset M$, a connection $D$ on $TM|_U$ is called an \textit{$X$-connection} if $D$ is of type $(1,0)$ and if
\begin{equation} \label{eq:X-connection-def}
i(X) D Y =  [X,Y]
\end{equation}
 for every vector field $Y$ on $U$.

 The following result is the analogue of Theorem ~\ref{thm:vanishing} for $X$-connections.

 \begin{lma} \cite[Lemma 8.11]{baum-bott} \label{lemma:X-vanishing}
Let $X$ be a non-vanishing holomorphic vector field on $U \subset M$ and let $D$ be an $X$-connection on $TM|_U$. Then $\Phi(\Theta(D)) = 0$ for any homogeneous symmetric polynomial $\Phi \in \C[z_1,\ldots,z_n]$ of degree $n$.
\end{lma}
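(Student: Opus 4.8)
The statement is the $X$-connection analogue of Baum-Bott's vanishing theorem, Theorem~\ref{thm:vanishing}, and I would prove it by an analogous argument. Since the vanishing of the form $\Phi(\Theta(D))$ is a local (indeed pointwise) statement, and $\Phi$ is an invariant polynomial so that $\Phi(\Theta(D))$ is intrinsically defined, it suffices to verify the vanishing in a neighborhood of an arbitrary point $p\in U$ and in a convenient local holomorphic trivialization of $TM$. The plan is to straighten $X$, read off the resulting restriction on the connection matrix, and conclude by a bidegree count exploiting that the transverse dimension $n-1$ is strictly smaller than $\deg\Phi=n$.

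First I would invoke the holomorphic flow-box theorem: since $X$ is nonvanishing and holomorphic near $p$, there are local holomorphic coordinates $(z_1,\ldots,z_n)$ centered at $p$ with $X=\partial/\partial z_n$. In the holomorphic frame $\partial/\partial z_1,\ldots,\partial/\partial z_n$ of $TM$ the $(1,0)$-connection $D$ is represented as $D=d+\theta$, where $\theta$ is a matrix of $(1,0)$-forms (the $(0,1)$-part of the connection matrix vanishes in a holomorphic frame precisely because $D$ is of type $(1,0)$). Applying the $X$-connection condition \eqref{eq:X-connection-def} to $Y=\partial/\partial z_i$ gives $i(\partial/\partial z_n)D(\partial/\partial z_i)=[\partial/\partial z_n,\partial/\partial z_i]=0$, which means exactly that $\theta(\partial/\partial z_n)=0$; equivalently, no entry of $\theta$ contains the differential $dz_n$. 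I would then compute $\Theta(D)=d\theta+\theta\wedge\theta$: since $\theta$ is of type $(1,0)$, the $(0,2)$-component of $\Theta(D)$ vanishes and its $(1,1)$-component equals $\dbar\theta$, which, like $\theta$, contains no $dz_n$.

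To finish, observe that $\Phi(\Theta(D))$ is a form of top degree $2n$, so only its $(n,n)$-component can be nonzero. Expanding $\Phi(\Theta(D))$ as a homogeneous polynomial of degree $n$ in the entries of $\Theta(D)$, each monomial is a wedge product of $n$ such entries, and each entry has antiholomorphic degree at most one (as $\Theta(D)$ has bidegree $(2,0)+(1,1)$); hence the only way to reach antiholomorphic degree $n$ is for every one of the $n$ factors to contribute its $(1,1)$-part $\dbar\theta$. Since these contain no $dz_n$, the resulting $(n,n)$-form contains no $dz_n$ and therefore cannot be a nonzero multiple of $dz_1\wedge\cdots\wedge dz_n\wedge d\bar z_1\wedge\cdots\wedge d\bar z_n$, forcing the $(n,n)$-component, and hence $\Phi(\Theta(D))$, to vanish. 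The step demanding the most care is precisely this bidegree bookkeeping: one must verify that attaining antiholomorphic degree $n$ in a product of $n$ curvature entries forces each factor into its $(1,1)$-part, and that in a holomorphic frame this part is exactly $\dbar\theta$, to which the single constraint $\theta(\partial/\partial z_n)=0$ transfers. Everything else is formal, and the argument parallels the proof of Theorem~\ref{thm:vanishing} with $\kappa=1$.
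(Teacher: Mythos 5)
Your proof is correct: the flow-box straightening $X=\partial/\partial z_n$, the observation that the $X$-connection condition forces the connection matrix (hence $\dbar\theta$) to contain no $dz_n$, and the bidegree count showing that only products of $n$ entries of the $(1,1)$-part $\dbar\theta$ can survive in top degree, together give a complete argument. The paper itself offers no proof of this lemma---it is quoted directly as Lemma~8.11 of \cite{baum-bott}---and your argument is essentially the classical one from that source, so there is nothing to fix or compare beyond noting that your write-up is self-contained where the paper relies on the citation.
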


\begin{lma}\label{roadtrip}
  The connection $D_0$ defined in \eqref{eq:Dvarphi1} is an $X$-connection on $TM|_{M \setminus \{X=0\}}$.
 \end{lma}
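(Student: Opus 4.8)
The connection $D_0$ is of type $(1,0)$ by the discussion following \eqref{boat}, so the only thing to check is the defining relation \eqref{eq:X-connection-def}, namely $i(X)D_0 Y=[X,Y]$ for every vector field $Y$ on $M\setminus\{X=0\}$. The plan is to revisit the computation in the proof of Lemma ~\ref{lemma:Dvarphi1} with $u=X$ and $v=Y$, and to upgrade the congruence \eqref{blue} to an exact equality by showing that in the rank one situation the two terms that are there controlled only modulo $\im\varphi_1=\langle X\rangle$ in fact vanish identically.

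First I would exploit that the resolution \eqref{eq:resolution-vector-field-2} has $\varphi_2=0$, and hence $\sigma_2=0$. Then \eqref{eq:idSigma} at level $k=1$ reduces to $\sigma_1\varphi_1=I_{\Ok_M}$, so writing $X=\varphi_1(1)$ for the unit section $1$ of $\Ok_M$ yields $\sigma_1 X=\sigma_1\varphi_1(1)=1$ with no remainder. Consequently the first passage to $\mathrm{mod}\ \im\varphi_1$ in the proof of Lemma ~\ref{lemma:Dvarphi1}, which there came from the term $\Dc\varphi_1\varphi_2\sigma_2\beta$ appearing just before \eqref{pink}, disappears entirely, and \eqref{afternoon} gives $i(X)b=\Dc\varphi_1\sigma_1 X=\Dc\varphi_1(1)$ as an exact identity, where $b$ is the $(1,0)$-form \eqref{boat}.

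Next I would compute $\Dc\varphi_1(1)(Y)$ by means of \eqref{abba}, \eqref{jewel}, and \eqref{meat}, exactly as in \eqref{yellow}. Since we work with the trivial connection $D_1=d$ on $\Ok_M$, as in the reduction carried out in the proof of Theorem ~\ref{thm:BBBM}, the unit section is flat, $D_1(1)=0$, so the term $\varphi_1 D_1\beta$ that in \eqref{yellow} was only absorbed modulo $\im\varphi_1$ is now genuinely zero. This produces the exact identity $\Dc\varphi_1(1)(Y)=-i(Y)D^{TM}X$; combining it with $D_0=D^{TM}+b$ from \eqref{eq:Dvarphi1} gives $i(X)D_0 Y=i(X)D^{TM}Y-i(Y)D^{TM}X$. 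Torsion-freeness of $D^{TM}$, see \eqref{eq:torsion-free-def}, identifies the right-hand side with $[X,Y]$, which is \eqref{eq:X-connection-def}.

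The main obstacle is essentially bookkeeping rather than conceptual: one must keep careful track of the superstructure signs in \eqref{jewel}--\eqref{yellow}, recalling that $\varphi_1$ has odd degree, and one must confirm that the two $\langle X\rangle$-valued error terms governing Lemma ~\ref{lemma:Dvarphi1} — the one originating from $\varphi_2\sigma_2$ and the one from $\varphi_1 D_1(1)$ — are indeed the only obstructions to an exact equality. Both vanish here, the former because $\varphi_2=0$ in the rank one resolution and the latter because the unit section is flat for the trivial connection $D_1=d$.
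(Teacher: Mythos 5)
Your proposal is correct and follows essentially the same route as the paper's proof, which likewise redoes the computation of Lemma~\ref{lemma:Dvarphi1} with $u=X$: it uses $\sigma_1 X=1$ (your derivation via \eqref{eq:idSigma} with $\varphi_2=\sigma_2=0$ is equivalent to the paper's remark that $\sigma_1$ is the inverse of $\varphi_1$ off $\{X=0\}$), then $D_1 1=d1=0$ to get the exact identity $\Dc\varphi_1 1(Y)=-i(Y)D^{TM}X$, and concludes by torsion-freeness of $D^{TM}$. Your explicit identification of the two error terms that made Lemma~\ref{lemma:Dvarphi1} only a congruence, and why both vanish here, is exactly the content the paper leaves implicit.
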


\begin{proof}(Compare to Lemma ~\ref{lemma:Dvarphi1}.)
By assumption $X$ is non-vanishing in $M\setminus \{X=0\}$, so that the statement makes sense.

We saw in Section ~\ref{pomona} that $D_0$ is a $(1,0)$-connection. It remains to prove that it satisfies \eqref{eq:X-connection-def} in $M\setminus \{X=0\}$.
Since  $\sigma_1$ is the inverse of $\varphi_1$ in $M\setminus \{X=0\}$, $\sigma_1X=1$ there. Thus, by \eqref{afternoon}
\begin{equation*}
  i(X) \big (\Dc \varphi_1\sigma_1 (dz\cdot \partial \slash \partial z) \big ) =  \Dc \varphi_1\sigma_1 X =
 \Dc \varphi_11
\end{equation*}
in $M\setminus \{X=0\}$.
Now, by \eqref{abba}, \eqref{jewel}, and the fact that $D_11=d1=0$,
\begin{equation*}
  \Dc \varphi_11(Y)= \Dc \varphi_1 (Y\otimes 1) = - i(Y) D_{\End}\varphi_1 1 = - i(Y) D^{TM}(\varphi_1 1) =-i(Y) D^{TM} X;
\end{equation*}
here $D_{\End}$ is the connection on $\End (E)$ induced by $D_1$ and $D^{TM}$.
Hence
\begin{equation*}
   i(X)D_0Y = i(X)D^{TM} Y + i(X) \big (\Dc \varphi_1\sigma_1 (dz\cdot \partial \slash \partial z) \big ) Y =
   i(X) D^{TM}  Y - i(Y) D^{TM}X = [X,Y],
   \end{equation*}
  where the last equality follows since $D^{TM}$ is torsion free, cf.\  \eqref{eq:torsion-free-def}.
    \end{proof}

  \begin{proof}[Proof of Lemma ~\ref{step1}]
    As in the proof of Proposition ~\ref{prop:changeConnectionsMetrics} let
   $\widetilde M=M\times[0,1]$ and let $\pi:\widetilde M\to M$ be the natural projection.
    Let
    \begin{equation*}
      D^\epsilon_t=t\widehat D_0^\epsilon + (1-t)D^\epsilon_0,
      \end{equation*}
      where now $\widehat D_0^\epsilon$ and $D^\epsilon_0$ denote the pullback connections on $\widetilde M$, and let
      \begin{equation*}
        N_\epsilon^\Phi= \pi_*\Phi \big ( \Theta (D^\epsilon_t)\big ).
        \end{equation*}
        Then by standard arguments (see, e.g., \cite[Chapter III.3]{wells}),
         $N_\epsilon^\Phi$ is a form of degree $2n-1$ such that
        \begin{equation}\label{alphabet}
          d N_\epsilon^\Phi= \Phi \big ( \Theta (\widehat D^\epsilon_0)\big )-
          \Phi \big ( \Theta (D^\epsilon_0)\big ),
        \end{equation}
cf.\ the proof of Proposition ~\ref{prop:changeConnectionsMetrics}.

Recall from the proof of Theorem ~\ref{train} that $\Theta (\widehat
D^\epsilon_0)$ is of the form \eqref{bluewear}; by the same arguments
$\Theta (D^\epsilon_0)$ is as well of the form \eqref{bluewear}. It
follows as in the proof of Proposition
~\ref{prop:changeConnectionsMetrics} that the limit of
$N_\epsilon^\Phi$ exists as a pseudomeromorphic current $N^\Phi$.
Moreover, since $\widehat D_0^\epsilon$ and $D^\epsilon_0$ are
$(1,0)$-connections, see Remark ~\ref{axel} and \ref{newyear}, as in the proof of Proposition ~\ref{prop:changeConnectionsMetrics}, it follows that $N^\Phi$ is of degree $(n, n-1)$.
Taking limits in \eqref{alphabet} we get \eqref{betapet}.

Since $\widehat D_0^\epsilon=D_0^\epsilon=D_0$ in $M\setminus
\Sigma_\epsilon$, see Section ~\ref{evening} and Remark \ref{newyear}, and $D_0$ is an
$X$-connection there by Lemma ~\ref{roadtrip}, it follows from
Lemma~8.18 in \cite{baum-bott} that $N_\epsilon^\Phi$ has support in
$\Sigma_\epsilon$. As in previous proofs we may assume that
$\{s=0\}=\{X=0\}$; in fact, we can choose $s=X$. Hence $N^\Phi$ has support on $\{X=0\}$.
    \end{proof}

    \begin{proof}[Proof of Lemma ~\ref{step2}]
      Throughout this proof we write $\sigma=\sigma_1$.

      Since $D^\epsilon_0$ is a $(1,0)$-connection, see Remark \ref{newyear}, and $\Phi$ is of degree $n$, it follows that
      $$\Phi\big(\Theta({D}_0^\epsilon)\big ) = \Phi\big (\Theta({D}_0^\epsilon)_{(1,1)}\big ),
      $$
      where $(\, \cdot \,)_{(1,1)}$ denotes the component of bidegree $(1,1)$.
By \eqref{eq:Dvarphi1} and \eqref{eq:Dvarphi1-epsilon}
\begin{equation*}
D_0^\epsilon = D^{TM} + \chi_{\epsilon} \Dc \varphi_1 \sigma (dz \cdot \partial/\partial z) = D^{TM} + \chi_{\epsilon} \Dc \varphi_1 \widetilde \sigma,
\end{equation*}
cf.\ \eqref{eq:tildeSigma}.
Since $\chi_{\epsilon} \Dc \varphi_1 \widetilde \sigma$ has bidegree
$(1,0)$, see Section \ref{pomona},
it follows that
    \begin{equation}\label{mountain}
        \Theta(D_0^\epsilon)_{(1,1)} = \Theta(D^{TM})_{(1,1)} +\dbar(\chi_\epsilon \Dc \varphi_1 \widetilde{\sigma}).
      \end{equation}
   Thus, by \eqref{eq:PhiAdd}, using that the forms in \eqref{mountain} are $\End (TM)$-valued $2$-forms,
    \begin{equation*}
       \Phi(\Theta(D_0^\epsilon)) = \sum_{k=0}^n \Phi_k\big (\dbar(\chi_\epsilon \Dc\varphi_1\widetilde{\sigma}),\Theta(D^{TM})_{(1,1)}\big ).
    \end{equation*}
    Since $\widetilde{\sigma}$ is a scalar-valued $1$-form,  $\widetilde{\sigma}\wedge \widetilde{\sigma}= 0$. Moreover $\dbar\chi_\epsilon\wedge\dbar\chi_\epsilon =0$. Using this we get that
    \begin{multline}\label{earlymorning}
      \Phi_k \big (\dbar(\chi_\epsilon \Dc\varphi_1\widetilde{\sigma}),\Theta(D^{TM})_{(1,1)}\big )= \\
      \chi_\epsilon^k ~\Phi_k\big (\dbar(\Dc\varphi_1 \widetilde{\sigma}) ,\Theta(D^{TM})_{(1,1)}\big ) +
      k ~\dbar\chi_\epsilon \wedge \widetilde{\sigma}\wedge (\chi_\epsilon \dbar \widetilde{\sigma})^{k-1}
\Phi_k\big (\Dc\varphi_1,\Theta(D^{TM})_{(1,1)}\big ).
\end{multline}

Let us consider the contribution to $\Phi(\Theta(D_0^\epsilon))$ from the first term in the right hand side of \eqref{earlymorning}. In view of Remarks ~\ref{elsa} and ~\ref{bronze},
\begin{equation}\label{broken}
  \lim_{\epsilon \to 0} \sum_k
   \chi_\epsilon^k ~\Phi_k\big (\dbar(\Dc\varphi_1 \widetilde{\sigma}),\Theta(D^{TM})_{(1,1)}\big )
   =\\
  \lim_{\epsilon \to 0} \chi_\epsilon ~\sum_k
\Phi_k\big (\dbar(\Dc\varphi_1 \widetilde{\sigma}),\Theta(D^{TM})_{(1,1)}\big ).
\end{equation}
Next, by \eqref{eq:PhiAdd} and \eqref{eq:Dvarphi1}, cf. \eqref{eq:tildeSigma},
 \begin{equation*}
   \sum_k
\Phi_k\big (\dbar(\Dc\varphi_1 \widetilde{\sigma}),\Theta(D^{TM})_{(1,1)}\big ) = \Phi\big (\Theta(D_0)\big).
\end{equation*}
Since $D_0$ is an $X$-connection over $M \setminus \{X=0\}$ by Lemma ~\ref{roadtrip}, $\Phi(\Theta(D_0)) = 0$ there by Lemma ~\ref{lemma:X-vanishing}. Thus, since $\chi_\epsilon$ vanishes in a neighborhood of $\{X=0\}$ we get that \eqref{broken} vanishes identically on $M$.

Next, let us consider the second term in the right hand side of \eqref{earlymorning}.
In view of Remarks ~\ref{elsa} and ~\ref{bronze} and \eqref{eq:tildeRX},
\begin{equation*}
  \lim_{\epsilon \to 0} k\dbar\chi_\epsilon \wedge \widetilde{\sigma}\wedge (\chi_\epsilon \dbar \widetilde{\sigma})^{k-1}
  =
   \lim_{\epsilon \to 0}  \dbar\chi^k_\epsilon \wedge \widetilde{\sigma}\wedge (\dbar \widetilde{\sigma})^{k-1}= \widetilde R_k^X.
  \end{equation*}
Hence, since $\Phi_k(\Dc\varphi_1,\Theta(D^{TM})_{(1,1)})$ is a smooth
form,
\begin{equation}\label{vormittag}
 \lim_{\epsilon \to 0} k \dbar\chi_\epsilon \wedge \widetilde{\sigma}\wedge (\chi_\epsilon \dbar \widetilde{\sigma})^{k-1}
 \Phi_k\big (\Dc\varphi_1,\Theta(D^{TM})_{(1,1)}\big ) =
 \widetilde R_k^X\wedge \Phi_k\big (\Dc\varphi_1,\Theta(D^{TM})_{(1,1)}\big ).
\end{equation}
Since $\widetilde{R}^X_k$ is a pseudomeromorphic current of bidegree $(k,k)$, it vanishes by the dimension principle when $k < \codim \{X=0\}$, see Section ~\ref{hemma}.  Also, since $D^{TM}$ is a $(1,0)$-connection,
for degree reasons, $\Theta(D^{TM})_{(1,1)}$ may be replaced by $\Theta(D^{TM})$ in \eqref{vormittag}.
We conclude that $\Phi(\Theta(D_0^\epsilon))$ is of the form \eqref{mantel}.
\end{proof}

\smallskip

From Theorem ~\ref{thm:BBBM} we obtain the following simple expression
of the Baum-Bott currents of an isolated singularity of a rank one
foliation.

\begin{cor}\label{nordstan}
  Let $M$ be a complex manifold of dimension $n$, let $\F$ be a rank one foliation on $M$, and let $\Phi\in \C[z_1,\ldots, z_n]$ be a homogeneous symmetric polynomial of degree $n$. Consider the resolution
  \eqref{eq:resolution-vector-field} and assume that $TM$ and $L$ are equipped with Hermitian metrics and $(1,0)$-connections $D^{TM}$ and $D_1$, respectively, and assume that $D^{TM}$ is torsion free. Assume that $p$ is an isolated singularity of $\F$ and let $R^\Phi_{\{p\}}$ be the associated Baum-Bott current. Let $z = (z_1,\ldots,z_n)$ be a local coordinate system centered at $p$ so that $\F$ is generated by the vector field
\begin{equation}\label{breakfast}
  X = \sum a_i(z) \frac{\partial}{\partial z_i}
  \end{equation} near $p$.  Then
    \begin{equation}\label{dinner}
      R_{\{p\}}^\Phi = \frac{1}{(2\pi i)^n}~
      \dbar\frac{1}{a_n}\wedge \dots \wedge\dbar\frac{1}{a_1}\wedge \Phi\left(\left(\frac{\partial a_i}{\partial z_j}\right)_{ij}\right) dz_1 \wedge \cdots \wedge dz_n.
    \end{equation}
  \end{cor}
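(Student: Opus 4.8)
The plan is to deduce the Corollary from Theorem~\ref{thm:BBBM} by specializing the general formula \eqref{eq:BB-current-vector-field} to the isolated singularity and making the matrix $\Dc\varphi_1$ explicit. First I would work in a coordinate chart $z=(z_1,\dots,z_n)$ around $p$, which is legitimate since $R^\Phi_{\{p\}}=\1_{\{p\}}R^\Phi$ is locally defined near $p$ by Theorem~\ref{train}, and since on a small enough neighbourhood $U$ of $p$ the foliation is generated by the single vector field $X$ with $\sing\F\cap U=\{p\}$, so that the resolution \eqref{eq:resolution-vector-field-2} applies and $R^\Phi|_U=R^\Phi_{\{p\}}$. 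Because $\codim\{p\}=n\geq \ell=n$, Corollary~\ref{famine} guarantees that $R^\Phi_{\{p\}}$ does not depend on the chosen Hermitian metrics and connections (the hypothesis on $D_2,\dots,D_N$ is vacuous here). I would therefore fix an arbitrary metric on $TM$ and take $D^{TM}=d$ to be the trivial connection in the frame $\partial/\partial z_1,\dots,\partial/\partial z_n$, which is a torsion-free $(1,0)$-connection.

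Next I would invoke Theorem~\ref{thm:BBBM}. Since $\{X=0\}=\{p\}$ has codimension $n$, the sum in \eqref{eq:BB-current-vector-field} collapses to the single term $k=n$; by \eqref{liseberg} with $k=\ell=n$ one has $\Phi_n(A,B)=\widetilde\Phi(A,\dots,A)=\Phi(A)$, so $\Theta(D^{TM})$ drops out and the surviving contribution is $(i/2\pi)^n\,\widetilde R^X_n\wedge \Phi(\Dc\varphi_1)$. The correction $\dbar N^\Phi$ vanishes: $N^\Phi$ is pseudomeromorphic of bidegree $(n,n-1)$ with support in $\{p\}$, which has codimension $n>n-1$, so $N^\Phi=0$ by the dimension principle (Section~\ref{hemma}). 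By Example~\ref{grothen}, moreover, $\widetilde R^X_n=\dbar\frac{1}{a_n}\wedge\cdots\wedge\dbar\frac{1}{a_1}\wedge dz_1\wedge\cdots\wedge dz_n$, cf.\ \eqref{halmstad}.

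It then remains to identify $\Phi(\Dc\varphi_1)$. Unwinding \eqref{eq:Dvarphi1} and \eqref{jewel} shows that $\Dc\varphi_1$ is the form-degree-zero endomorphism of $TM$ given by $Y\mapsto -i(Y)\,D^{TM}X$; with $D^{TM}=d$ one has $D^{TM}X=\sum_j(\partial a_j)\otimes \partial/\partial z_j$, so in the frame $\partial/\partial z_j$ its matrix is exactly $-\big(\partial a_i/\partial z_j\big)_{ij}$. (The only terms that a nontrivial $D^{TM}$ could add are proportional to the $a_k$, hence lie in the ideal $(a_1,\dots,a_n)$ and are annihilated by $\widetilde R^X_n$ because $a_k\,\dbar(1/a_k)=0$; choosing $D^{TM}=d$ sidesteps this entirely.) Since $\Phi$ is homogeneous of degree $n$, $\Phi(\Dc\varphi_1)=(-1)^n\Phi\big((\partial a_i/\partial z_j)_{ij}\big)$, and combining this with the elementary identity $(i/2\pi)^n(-1)^n=1/(2\pi i)^n$ converts the surviving term into precisely the right-hand side of \eqref{dinner}. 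The main obstacle is exactly this last bookkeeping: correctly reading off that the scalar part of $\Dc\varphi_1$ is the negative Jacobian of $X$, and verifying that the constant from Theorem~\ref{thm:BBBM}, the sign from $\Phi(-A)=(-1)^n\Phi(A)$, and the normalization of Example~\ref{grothen} combine to the stated factor $1/(2\pi i)^n$; the rest is a direct specialization of results already proved.
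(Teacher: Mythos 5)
Your proposal is correct and follows essentially the same route as the paper's proof: specialize Theorem~\ref{thm:BBBM} at the isolated singularity, kill $N^\Phi$ by the dimension principle, identify $\widetilde R^X_n$ via Example~\ref{grothen}, and compute $\Phi(\Dc\varphi_1)=(-1)^n\Phi\bigl((\partial a_i/\partial z_j)_{ij}\bigr)$ after reducing to trivial connections via Corollary~\ref{famine}. The only (immaterial) difference is ordering—you invoke Corollary~\ref{famine} before applying Theorem~\ref{thm:BBBM}, while the paper applies the theorem first and then switches to trivial connections for the explicit computation—and your side remark that a nontrivial $D^{TM}$ contributes only terms in the ideal $(a_1,\dots,a_n)$ annihilated by $\widetilde R^X_n$ is a correct but unneeded alternative to that reduction.
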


\begin{remark} \label{rmk:grothendieck-residue}
  Note in view of Example ~\ref{supper} that the action of $R_{\{p\}}^\Phi$ on the function $1$ equals
  \[
    \Res_p\left[ \Phi\Big( \big(\frac{\partial a_i}{\partial z_j}\big)_{ij}\Big) \frac{dz_1 \wedge \ldots \wedge dz_n}{a_1\cdots a_n} \right]
    \]
  In particular, we recover the classical expression of Baum-Bott residues
  in terms of the Grothendieck residue given in \eqref{eq:grothendieck-residue}, see also \cite[Theorem 1 and Proposition 8.67]{baum-bott}.
\end{remark}

\begin{proof}[Proof of Corollary ~\ref{nordstan}]

Since $R_{\{p\}}^\Phi$ only depends on \eqref{eq:resolution-vector-field-2} in a neighborhood of $p$ we may replace $M$ by a neighborhood of $p$ where $L$ is trivial and $\F$ is generated by a vector field $X$ of the form \eqref{breakfast}, so that we are in the situation of Theorem ~\ref{thm:BBBM}. Note that $\Phi_n (\Dc \varphi_1, \Theta (D^{TM}))= \Phi (\Dc \varphi_1)$, cf.\ \eqref{liseberg}. Thus, it follows from Theorem ~\ref{thm:BBBM} that
  \begin{equation}\label{syrup}
    R^\Phi_{\{p\}} =  \left (\frac{i}{2\pi}\right )^n \widetilde{R}_n^X \wedge \Phi(\Dc \varphi_1).
    \end{equation}
    Indeed, the pseudomeromorphic current $N^\Phi$ vanishes by the dimension principle, see Section ~\ref{hemma}, since it has bidegree $(n, n-1)$ and support on the point $p$.

    Let us make the right hand side in \eqref{syrup} more explicit.
By Corollary ~\ref{famine}, $R_{\{p\}}^\Phi$ is independent of the choice of
Hermitian metrics and $(1,0)$-connections. We may therefore assume that $D^{TM}$ and $D_1$ are trivial.
    If $D$ is the connection on \eqref{eq:resolution-vector-field-2} induced by $D^{TM}$ and $D_1$, then $D_{\End} \varphi_1=d\varphi_1$.
    Since $L$ is trivial $\Dc$ can be regarded as a section of $\End(TM)$.
    By \eqref{jewel},
    $
    \Dc \varphi_1(u) = -i(u) d\varphi_1.
    $
    Recall that $\varphi_1$ is just multiplication by $X$.
    A computation yields that $i(u) d\varphi_1$ is multiplication by the Jacobian matrix $ \big( \frac{\partial a_i}{\partial z_j} \big)_{ij}$, cf.\ \eqref{jewel}.
   Thus
   \begin{equation}\label{varberg}
     \Phi (\Dc \varphi_1) =
(-1)^n  \Phi\left(\left(\frac{\partial a_i}{\partial z_j}\right)_{ij}\right).
\end{equation}
By plugging \eqref{halmstad} and \eqref{varberg} into \eqref{syrup}, we get
\eqref{dinner}.
    \end{proof}

\end{document}